\newtheorem{theorem}{Theorem}
\newtheorem{proposition}[theorem]{Proposition}
\newtheorem{lemma}[theorem]{Lemma}
\newtheorem{corollary}[theorem]{Corollary}
\newtheorem{conjecture}[theorem]{Conjecture}
\newtheorem{observation}[theorem]{Observation}
\newtheorem{claim}{Claim}
\newenvironment{proofc}{\vspace*{-.22in}\begin{proof}[Proof of Claim]}{\end{proof}}
\newcommand{\Z}{\mathbb{Z}}
\begin{document}

\title{Group connectivity in 3-edge-connected signed graphs}
\author{
Alejandra Brewer Castano\footnote{azb0163@auburn.edu; Department of Mathematics and Statistics, Auburn University, Auburn, AL, USA 36849}
\qquad
Jessica McDonald\footnote{mcdonald@auburn.edu; Department of Mathematics and Statistics, Auburn University, Auburn, AL, USA 36849. Supported in part by Simons Foundation grant \#845698.}
\qquad
Kathryn Nurse\footnote{corresponding author; knurse@sfu.ca; School of Computing Science, Simon Fraser University, Burnaby, B.C., Canada V5A 1S6. Supported in part by the Natural Sciences and Engineering Research Council of Canada (NSERC).}}
\date{}

\maketitle

\begin{abstract}Jaeger, Linial, Payan, and Tarsi introduced the notion of $A$-connectivity for graphs in 1992, and proved a decomposition for cubic graphs from which $A$-connectivity follows for all 3-edge-connected graphs when $|A|\geq 6$. The concept of $A$-connectivity was generalized to signed graphs by Li, Luo, Ma, and Zhang in 2018 and they proved that all 4-edge-connected flow-admissible signed graphs are $A$-connected when $|A|\geq 4$ and $|A|\neq 5$. 
We prove that all 3-edge-connected flow-admissible signed graphs are $A$-connected when $|A|\geq 6$ and $|A|\neq 7$. Our proof is based on a decomposition that is a signed-graph analogue of the decomposition found by Jaeger et. al, and which may be of independent interest.
\end{abstract}

\noindent \textbf{Keywords:} Signed graphs, nowhere-zero flow, group connectivity.

\section{Introduction}

In this paper, graphs are finite and may contain multiple edges and loops. We are concerned with group connectivity in signed graphs; the unfamiliar reader may refer to the relevant definitions in Section 2 of this paper. For notation and terminology not defined here we follow \cite{WestText}. 

Seymour's 6-Flow Theorem \cite{Seymour6flow} says that every 2-edge-connected graph has a nowhere-zero $6$-flow. Bouchet \cite{Bouchet} conjectured an analog for signed graphs, namely that every flow-admissible signed graph has a nowhere-zero 6-flow. DeVos, Li, Lu, Luo, Zhang and Zhang \cite{DLLZZ} showed that such signed graphs have a nowhere-zero 11-flow; Bouchet's Conjecture has also been verified for $6$-edge-connected graphs by Xu and Zhang \cite{XZ}, and for $4$-edge-connected graphs by Raspaud and Zhu \cite{RASPAUD2011464}.

Due to seminal work of Tutte \cite{tutte1949,tutte_1954, tutte_1956}, we know that a graph has a nowhere-zero $k$-flow iff it has a nowhere-zero $A$-flow for any abelian group $A$ with $|A|=k$, and so Seymour's 6-flow Theorem implies that every 2-edge-connected graph has a nowhere-zero $A$-Flow, for every abelian group $A$ with $|A|\geq 6$. This equivalence is not true in either direction for signed graphs (see Li, Luo, Ma, and Zhang \cite{LLMZ}), and so the following conjecture would extend, but not imply, Bouchet's Conjecture.

\begin{conjecture}\label{conj: Bouchet group analog} Every flow-admissible signed graph has a nowhere-zero $A$-flow for every abelian group $A$ with $|A|\geq 6$.
\end{conjecture}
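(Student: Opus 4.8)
The natural plan is to organise the proof around edge-connectivity: reduce Conjecture~\ref{conj: Bouchet group analog} to the case of $3$-edge-connected signed graphs, and then settle that case for every abelian group $A$ with $|A|\ge 6$. The first half rests on the $A$-connectivity of $3$-edge-connected flow-admissible signed graphs proved in this paper; the second half is where the real difficulty sits, and it collapses to a single question about $\mathbb{Z}_7$.

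\textbf{Step 1 (reduction to the $3$-edge-connected case).} I would show by induction on $|E(G)|$ that if every $3$-edge-connected flow-admissible signed graph is $A$-connected, then every flow-admissible signed graph has a nowhere-zero $A$-flow. A flow-admissible signed graph has no cut edge, so if it is not $3$-edge-connected it has a cut vertex or a $2$-edge-cut. A cut vertex is handled block by block. For a $2$-edge-cut $\{e_1,e_2\}$, one contracts each side in turn, attaches to each resulting piece a virtual edge of a suitable sign so that the two pieces are again flow-admissible and strictly smaller, and recombines nowhere-zero $A$-flows of the pieces that agree on the virtual edge; prescribing the virtual-edge value is exactly where $A$-connectivity of the $3$-edge-connected pieces is used, while small pieces (digons, triangles, barbells) are $A$-connected enough by direct inspection. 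The actual content is the bookkeeping around \emph{unbalanced} $2$-edge-cuts, the short list of irreducible small configurations --- note that an unbalanced digon is not flow-admissible, whereas a barbell is and must be treated by hand --- and invoking the group form of the $6$-Flow Theorem, already recorded in the introduction, on any piece that turns out to be a balanced ordinary graph. I expect this step to be technical but routine.

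\textbf{Step 2 (the $3$-edge-connected case) and the main obstacle.} When $|A|\ge 6$ and $|A|\ne 7$, Step 2 is precisely the theorem of this paper: such a signed graph is $A$-connected, hence has a nowhere-zero $A$-flow, and together with Step 1 this proves the conjecture for all $|A|\ge 6$ with $|A|\ne 7$. The one remaining case is $A=\mathbb{Z}_7$, and by the same reasoning as in Step 1 the conjecture for $\mathbb{Z}_7$ is essentially equivalent to showing that every $3$-edge-connected flow-admissible signed graph is $\mathbb{Z}_7$-connected --- this is the crux. The signed analogue of the Jaeger--Linial--Payan--Tarsi decomposition does not reach it: the corrections it uses to repair imbalance live in an order-$2$ part of the group, which $\mathbb{Z}_7$ lacks (there is no factorisation $\mathbb{Z}_7\cong\mathbb{Z}_2\times A'$), and the $|A|=7$ exclusion appears intrinsic to the signed setting, mirroring the $|A|=5$ exclusion in the $4$-edge-connected theorem of Li, Luo, Ma, and Zhang. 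One plausible line, if $\mathbb{Z}_7$-connectivity turns out to be too strong, is to observe that the conjecture itself asks only for a nowhere-zero $\mathbb{Z}_7$-flow; since $\mathbb{Z}_7$ is cyclic, a nowhere-zero integer $6$-flow reduces mod $7$ to such a flow, so it would suffice to promote the nowhere-zero $\mathbb{Z}_6$-flow that this paper's methods already yield to a nowhere-zero integer $6$-flow, using the structure of the decomposition into cubic pieces and the primality of $7$. A success of that kind would also prove Bouchet's Conjecture for $3$-edge-connected signed graphs (currently known only for $4$- and $6$-edge-connected graphs), which is a fair measure of how hard the step should be; I expect essentially all the work of the conjecture to be concentrated here.
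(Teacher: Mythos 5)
The statement you are trying to prove is stated in the paper as a \emph{conjecture}: the paper itself does not prove it, and only establishes the special case of $3$-edge-connected, $2$-unbalanced signed graphs for $|A|\geq 6$, $|A|\neq 7$ (Theorem~\ref{main}). Your proposal does not close the distance. The decisive problem is Step~1, which you call ``technical but routine'': the paper explicitly notes that Seymour's reduction from $2$-edge-connectivity to $3$-edge-connectivity is impossible for signed graphs, and there is a concrete obstruction. Take a flow-admissible signed graph with a $2$-edge-cut $\{e_1,e_2\}$ in which one side $G[X]$ is balanced and the other side is unbalanced. Contracting the unbalanced side (or attaching a ``virtual edge'' of either sign) produces a piece whose signature is equivalent either to an all-positive one or to one with exactly a single negative edge; in the latter case the piece is only $1$-unbalanced, hence not flow-admissible and in fact has no nowhere-zero $A$-flow for any odd-order $A$ such as $\mathbb{Z}_7$, so the induction cannot return a flow on that piece at all. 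Nor can you repair this by ``prescribing the virtual-edge value via $A$-connectivity of the pieces'': a balanced piece is just an ordinary $2$-edge-connected graph, and $2$-edge-connected graphs are not in general $A$-connected (the paper recalls this), so there is no tool that lets you impose the needed boundary values on the cut edges from the balanced side. Any correct reduction would have to track how negative cycles distribute across the cut and handle the balanced-side case by a genuinely different argument; this is precisely the non-routine content, and you have not supplied it.

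Step~2 has the same character. For $|A|\neq 7$ you are simply quoting Theorem~\ref{main}, which is legitimate but is the entire content of the paper rather than something your proposal establishes; and for $A=\mathbb{Z}_7$ you concede the case is open and offer only the fallback of upgrading a nowhere-zero $\mathbb{Z}_6$-flow to a nowhere-zero integer $6$-flow and reducing mod $7$. But producing integer nowhere-zero $6$-flows is Bouchet's Conjecture itself, known only for $4$- and $6$-edge-connected signed graphs, so this fallback replaces the missing case by a strictly harder open problem. As it stands, your write-up is a research plan with two unproved pivots (the $2$-edge-cut reduction and the $|A|=7$ case), not a proof of Conjecture~\ref{conj: Bouchet group analog}.
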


In this paper we contribute a result in support of Conjecture \ref{conj: Bouchet group analog}. To get our result we assume more than just flow-admissibility, and we don't get the result when $|A|=7$ --- but for all other $|A|\geq 6$, we actually prove that the signed graph is \emph{$A$-connected} rather than just having a nowhere-zero $A$-flow. Namely, we prove the following.

\begin{theorem}\label{main} Let $G$ be a $3$-edge-connected, 2-unbalanced signed graph. Then $G$ is $A$-connected for every abelian group $A$ with $|A|\geq 6$ and $|A|\neq 7$.
\end{theorem}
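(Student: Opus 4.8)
The plan is to reduce to cubic graphs and then prove, and apply, a decomposition of cubic $3$-edge-connected $2$-unbalanced signed graphs that is the signed analogue of the cubic-graph decomposition of Jaeger, Linial, Payan and Tarsi (hereafter JLPT). I would use three facts about (signed) group connectivity, each either appearing in the cited papers or routine to check: $A$-connectivity is invariant under switching and is preserved under edge contraction; if $H\subseteq G$ is $A$-connected and $G/H$ is $A$-connected then $G$ is $A$-connected; and a balanced signed graph is, after switching, an ordinary graph, so a $3$-edge-connected balanced signed graph is $A$-connected for every abelian $A$ with $|A|\ge 6$ by the JLPT theorem. Given these, the scheme for a cubic graph is: locate inside $G$ a connected $A$-connected subgraph $H$ that carries all of the unbalance (so that $G/H$ is balanced); since contraction does not lower edge-connectivity, $G/H$ is then a $3$-edge-connected balanced signed graph, hence $A$-connected, and therefore $G$ is $A$-connected.

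To reduce to the cubic case I would use the standard device: replace a vertex $v$ of degree $d\ge 4$ by a circuit on $d$ new vertices with all new edges positive, attaching one of the $d$ edges formerly at $v$ to each new vertex (loops at $v$ become chords). The result $G^{+}$ is cubic, still $3$-edge-connected, and --- since neither deleting nor contracting a positive circuit changes which cycles are unbalanced --- still $2$-unbalanced; and since $G$ is recovered from $G^{+}$ by contracting the new circuits, $A$-connectivity of $G^{+}$ gives $A$-connectivity of $G$. So assume $G$ is cubic. By Petersen's theorem $G$ has a perfect matching $M$, so $E(G)$ is the union of $M$ and a $2$-factor $F$; this is exactly the structure JLPT exploit. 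The signed decomposition I would prove is that when $G$ is also $2$-unbalanced one can choose $M$ (equivalently $F$) so that the negative edges sit on $F$ in a controlled position --- concretely, so that a bounded union of short circuits of $F$ together with a few connecting matching edges forms a connected subgraph $H$ that (i) is $A$-connected for every abelian $A$ with $|A|\ge 6$ and $|A|\ne 7$ and (ii) has $G/H$ balanced. Here $3$-edge-connectivity supplies enough matching edges and chords to make such an $H$ $A$-connected even though $G$ may have large girth, and the $2$-unbalanced hypothesis --- genuinely stronger than flow-admissibility --- guarantees there is enough unbalance for a single connected $H$ to absorb.

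The main obstacle is this decomposition: choosing the matching/$2$-factor, and hence the core $H$, with the negative edges in the required position while retaining enough slack to certify both (i) and (ii). This is the step that departs from JLPT and in which both $3$-edge-connectivity and the full force of $2$-unbalancedness are used; I expect the heart of it to be a careful analysis of the nontrivial $3$-edge-cuts of the cubic graph and their interaction with its balanced and unbalanced parts, handled by induction, together with the $2$-factorization of the $4$-regular graph obtained by contracting $M$. The exclusion $|A|\ne 7$ enters at the point where one verifies that $H$ is $A$-connected (equivalently, if one runs the flow construction on $F$ directly rather than quoting JLPT, at that construction): the one genuinely new phenomenon is that a negative edge contributes its value to the boundary at each of its ends, i.e.\ with a factor of $2$, and the relevant counting works provided $A$ has an element of order $2$ --- in particular when $|A|=6$, since $\mathbb{Z}_6\cong\mathbb{Z}_2\oplus\mathbb{Z}_3$ --- or $|A|\ge 8$ by sheer room; the single leftover value $|A|=7$ is both odd, so doubling is a fixed-point-free automorphism, and too small for a direct count, and is genuinely exceptional, mirroring the way $|A|=5$ is exceptional in the $4$-edge-connected theorem of Li, Luo, Ma and Zhang. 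Finally, a handful of signed graphs too small to be reduced to cubic or to admit the decomposition --- notably $K_4$ with a negative perfect matching, and a few others --- would be checked directly to be $A$-connected for all abelian $A$ with $|A|\ge 6$ and $|A|\ne 7$, serving as base cases.
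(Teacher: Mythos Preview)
Your scheme has two genuine gaps, and the overall route diverges sharply from what the paper actually does.

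First, the reduction to cubic. Replacing a vertex of degree $d\ge 4$ by a $d$-cycle with the incident edges attached in \emph{some} cyclic order does not in general preserve $3$-edge-connectivity: two edges of the new cycle can become a $2$-edge-cut if the original edges to the two sides of some near-tight cut in $G-v$ land on consecutive arcs. The paper avoids this by uncontracting one edge at a time and proving (Proposition~\ref{prop:can uncontract and keep 2-unbal 3ec}) that for any fixed incident edge $e$ there is a companion $e'$ so that splitting off $\{e,e'\}$ keeps both $3$-edge-connectivity and $2$-unbalancedness. Your version would need at least that much care.

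Second, and more seriously, the heart of your plan is to find a connected $A$-connected subgraph $H$ that ``carries all of the unbalance'' so that $G/H$ is balanced. You never construct $H$; you only gesture at ``a bounded union of short circuits of $F$ together with a few connecting matching edges''. There is no reason for such a bounded $H$ to exist: in a cubic $2$-unbalanced signed graph the negative cycles can be long and can involve edges spread over many components of the $2$-factor, so any $H$ with $G/H$ balanced may have to be essentially all of $G$. Nothing in the Petersen decomposition $E(G)=M\cup F$ controls where the unbalance sits. The ``careful analysis of nontrivial $3$-edge-cuts \ldots\ handled by induction'' is a hope, not an argument, and I do not see how to realise it.

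The paper does something quite different. It never reduces to a balanced quotient. Instead it splits into three cases: (i) if $G$ has no two disjoint negative cycles, a theorem of Slilaty forces $G$ to be projective-planar, and duality with $6$-degeneracy of projective-planar graphs gives $A$-connectivity for all $|A|\ge 6$ (including $7$); (ii) if $|A|$ is composite, a DeVos decomposition $E(G)=T\cup B$ into a spanning tree and a $2$-base lets one build the flow in two layers, one valued in a proper subgroup $N$ and one in $A/N$; (iii) if $|A|$ is prime $\ge 11$ and $G$ has two disjoint negative cycles, a finer decomposition (Theorem~\ref{thm: decomposition to connected base and almost 2-base}) produces a connected base and an ``almost $2$-base'' whose defect is a negative sun, and the flow is built directly with a counting argument that needs $p\ge 11$. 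The exclusion $|A|=7$ arises precisely because case (iii) needs $p\ge 11$ and case (ii) needs $|A|$ composite, leaving $7$ uncovered when $G$ has two disjoint negative cycles.
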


The notion of $A$-connectivity is a powerful generalization of nowhere-zero $A$-flows: in particular, given a (signed) graph $G$, if a subgraph $H$ of $G$ is $A$-connected, then $G/H$ has a nowhere-zero $A$-flow iff $G$ does (Jaeger, Lineal, Payan, Tarsi \cite{JaegerFrancois1992Gcog}; Li, Luo, Ma, Zhang \cite{LLMZ}). Lov\'asz, Thomassen, Wu and Zhang \cite{LTWZ} were able to take advantage of the inductive strength $A$-connectivity provides in order to show that 6-edge-connected graphs have nowhere-zero $\mathbb{Z}_3$-flows (and hence satisfy Tutte's 3-Flow Conjecture); in fact they proved that 6-edge-connected graphs are $\mathbb{Z}_3$-connected in order to achieve this goal.

Jaeger, Linial, Payan, and Tarsi \cite{JaegerFrancois1992Gcog} were the first to generalize the concept of nowhere-zero $A$-flows to $A$-connectedness. Noting that 2-edge-connectivity does not imply $A$-connectivity for any $A$,
they proved that every 3-edge-connected graph is $A$-connected for every abelian group $A$ with $|A|\geq 6$, thus providing a group-connectivity analog to Seymour's 6-flow Theorem. 
To prove their theorem, Jaeger et. al found a pretty decomposition of a $3$-connected cubic graph (minus a vertex) into a $1$-base and a $2$-base\footnote{as defined in \cite{Seymour6flow}, and here in Section \ref{sect:decompositions}.} that are disjoint. Similarly, our Corollary \ref{cor: decomp general JLPT} gives a decomposition of a certain cubic signed graph into a $1$-base and a $2$-base (minus an induced negative cycle) that are disjoint.

In the same paper, Jaeger et. al proved that every 4-edge-connected graph is $A$-connected for every abelian group $A$ with $|A|\geq 4$. Li, Luo, Ma, and Zhang \cite{LLMZ} extended the notion of $A$-connectedness to signed graphs, and proved that every 4-edge-connected, 2-unbalanced signed graph is $A$-connected for every abelian group $A$ with $|A|\geq 4$ and $|A|\neq 5$. 
Note that 2-unbalanced is required for flow-admissibility in signed graphs (see Section 2 for more on this). Our Theorem \ref{main} takes the next step from this last result by lowering the edge-connectivity as far as possible for such a statement about group-connectivity, thereby getting closer to the assumptions of Conjecture \ref{conj: Bouchet group analog}. 

Even in graphs, group-connectivity is not in general a monotone property (for example Jaeger, Linial, Payan, and Tarsi \cite{JaegerFrancois1992Gcog} provided a graph that is $\mathbb{Z}_6$-connected but not $\mathbb{Z}_5$-connected). Given this, it is worth stating that the $|A|\geq 6$ bound in Theorem \ref{main} is best possible. In Section 4 of this paper we  discuss a 2-unbalanced signature of the Petersen graph that is not $A$-connected for any abelian group $A$ with $|A|\leq 5$, and indeed does not even have a nowhere-zero $A$-flow for any such $A$. Interestingly, this example was first presented by Bouchet \cite{Bouchet} who proved that although it is flow admissible, it does not have a nowhere zero $k$-flow for any $k\leq 5$. This means that there is a common sharpness example for our Theorem \ref{main}, for Conjecture \ref{conj: Bouchet group analog}, and for Bouchet's Conjecture.

In addition to our sharpness example, Section 4 discusses the Flow--Colouring Duality Theorem of Tutte \cite{tutte_1954}, and how this can be extended to other surfaces via signed graphs. We include here statements and proofs that may be considered implicit from the work of Tutte \cite{tutte_1954} and Bouchet \cite{Bouchet} but seem otherwise absent in the literature. We use such results to justify our sharpness example, but also to prove an important case of Theorem \ref{main} in this section.

Our proof of Theorem \ref{main} is inspired by Seymour's second proof of his 6-Flow Theorem (sketched in Section 5 of \cite{Seymour6flow}). Seymour's proof contains a structure theorem about cubic 3-edge-connected graphs, and DeVos \cite{Matt} recently noticed that a similar theorem exists for signed graphs. Section 5 contains two structure theorems: DeVos's theorem, as well as a second structure theorem which is a signed graph analogue of the structure found by Jaeger et. al \cite{JaegerFrancois1992Gcog}. In order to apply these fruitfully, we need to make some reductions for our problem, in particular to cubic graphs. These reductions are the subject of Section 3. It is worth noting that while Seymour's reduction from 2-edge-connectivity to 3-edge-connectivity is impossible for signed graphs (and for group connectivity), 
we are able to reduce our problem using the idea of so-called ``contractible configurations'', and in particular a result of Li, Luo, Ma, and Zhang \cite{LLMZ} that says, under some conditions on signed graphs $G, H$  with $H\subseteq G$, that $G$ is $A$-connected iff $G/H$ is $A$-connected.

The sixth and final section of this paper has the proof of Theorem \ref{main}, with two subsections giving the arguments for the remaining two cases (one case having been handled previously in Section 4). Note that given a signed graph, and two abelian groups $A, B$ with $|A|=|B|$, a nowhere-zero $A$-flow does not necessarily imply the existence of a nowhere-zero $B$-flow (see Li, Luo, Ma, and Zhang \cite{LLMZ}). However, in our proof of Theorem \ref{main}, it is only the size and not the structure of the groups that is important for our cases. On the other hand, our cases do divide based on the structure of the signed graph, and in particular, whether our not it has two disjoint negative cycles. We'll actually prove that Theorem \ref{main} extends to the case $|A| = 7$ unless $G$ has two disjoint negative cycles. While we suggest it should be possible to extend Theorem 2 to this last case, it appears well out of reach of our methods.

\section{Signed graphs and group connectivity}

A \emph{signed graph} $(G,\sigma)$ is a graph $G$ together with a \emph{signature} $\sigma: E(G) \to \{-1,1\}$. The signature partitions $E(G)$ into \emph{positive} edges, namely those $e$ with $\sigma(e)=1$, and \emph{negative} edges, namely those $e$ with $\sigma(e)=-1$. For convenience we usually refer to a signed graph as $G$, omitting the name of the signature if no confusion arises. Signed graphs with only positive edges can also be interpreted as unsigned graphs, and all the definitions below are consistent with this interpretation. Moreover, in the absences of specific definition for signed graphs, we use the same definition for graphs (eg. a signed graph $G$ is \emph{simple} if it has no loops or parallel edges). 

Given a signed graph $(G, \sigma)$ and a vertex $v\in V(G)$, we perform a \emph{switch} operation at $v$ to get a new signature $\sigma'$ by defining $\sigma'(e) = -\sigma(e)$ for all edges $e$ incident to $v$. We use the term \emph{switch on an edge-cut} in $G$ to mean performing the switch operation on all vertices on one side of an edge-cut; this has the effect of changing the signature of every edge in the edge-cut and leaving the signature of every other edge unchanged.
We say that two signatures of $G$ are \emph{equivalent} if one can be obtained from the other by a sequence of switch operations, or equivalently, by a single switch on an edge-cut. A signed graph is \emph{balanced} if there is an equivalent signature where all edges are positive, and \emph{unbalanced} otherwise. A signed graph $G$ is \emph{$k$-unbalanced} if every equivalent signature of $G$ has at least $k$ negative edges. A cycle $C$ in a signed graph is called \emph{negative} (or \emph{unbalanced}) if $\prod_{e\in H} \sigma(e)=-1$; otherwise $C$ is called \emph{positive} (or \emph{balanced}). It is well known that two signatures are equivalent iff they have the same set of negative cycles \cite{ZaslavskyThomas1982Sg}. In particular, this means that any balanced subgraph $H$ of a signed graph can always be re-signed so that every edge of $H$ is positive.

Every edge of a signed graph can be thought of as being composed of two half-edges $h$ and $h'$, each of which is incident with one end \cite{Bouchet}. Given a signed graph $G$ and a positive edge $e$ with endpoints $x,y$, \emph{orienting} $e$ means choosing exactly one of the half-edges to direct away from its endpoint, with the other half-edge directed towards its endpoint. On the other hand, if the edge $e$ is negative, then \emph{orienting} $e$ means choosing to either direct both half-edges away from their endpoints, or direct both half-edges towards their endpoints. Equivalently, we can define $\tau(h) =1$ if $h$ is a half-edge oriented away from its endpoint and $\tau(h) =-1$ if $h$ is a half-edge oriented towards its endpoint. In this way we may refer to an \emph{orientation} of a signed graph (where every edge has been oriented) as $\tau$; such an oriented signed graph is also referred to as a \emph{bidirected graph}.

Given a signed graph $G$ with orientation $\tau$, an abelian group $A$, and a function $f: E(G) \to A$, the \emph{boundary} of $f$ at a vertex $v$ is 
$$\partial f (v)=\sum_{h\in H_G(v)} \tau (h) f(e_h),$$
where $H_G(v)$ is the set of half-edges incident to $v$, and $e_h$ is the edge containing $h$.
Note that $\partial f(v)$ amounts to summing the $f$-values of all edges pointing out of $v$, and subtracting the $f$-values of all edges pointing into $v$.  
The function $f$ is an \emph{$A$-flow} of $G$ if $\partial f (v)=0$ for each $v\in V(G)$, and is an (integer) \emph{$k$-flow} if it is a $\mathbb{Z}$-flow and $|f(e)|< k$ for each $e\in E(G)$.
A flow $f$ is \emph{nowhere zero}, abbreviated \emph{nz}, if $f(e)\neq 0$ for all $e\in E(G)$. 

Observe that for a signed graph $G$, abelian group $A$, and mapping $f: E(G) \to A$, 
$$\sum_{v\in V(G)}\partial f (v)=2a$$
for some $a\in A$. This is because a positive edge contributes 0 to the sum (it adds to the boundary of one endpoint and subtracts the same value from their other endpoint), and a negative edge $e$ either contributes $2f(e)$ or $-2f(e)$ to this sum (according to whether its half-edges point away from or towards both their endpoints, respectively). If $G$ is a signed graph with all edges positive (or unsigned graph), this means that $\sum_{v \in V(G)}\partial f(v) =0$. In general we call any mapping $\beta : V(G) \rightarrow A$ satisfying $\sum_{v \in V(G)} \beta(v) =2a$ for some $a\in A$ an \emph{A-boundary} of the signed graph $G$. 

Given a signed graph $G$ and an $A$-boundary $\beta$ we say that we can \emph{satisfy} $\beta$ if there exists a function $f : E(G) \rightarrow A\setminus\{0\}$ and orientation $\tau$ so that $\partial f = \beta$. A signed graph $G$ is \emph{A-connected} for some abelian group $A$ if we can satisfy all $A$-boundaries of $G$. Since $\beta=0$ is always an $A$-boundary of $G$, an $A$-connected graph has, in particular, an nz $A$-flow. 

Given a signed graph $G$, an abelian group $A$, and an $A$-boundary $\beta$, we can satisfy $\beta$ iff we can satisfy $\beta$ using any orientation of $G$; this is because the flow (and signature) is maintained when simultaneously changing the orientation of an edge $e$ and replacing $f(e)$ with its inverse element. Similarly, if we \emph{switch} at a vertex $v$ by changing the direction (but not the $f$- value) of every half-edge incident to $v$, this changes the signature of every edge incident to $v$ and so $\partial f(v)$ is now the negative of what it was. If we can satisfy this new slightly adjusted boundary, then we can still satisfy the original $\beta$ by switching at $v$ again. Thus, we can satisfy an $A$-boundary of a signed graph $G$ iff we can satisfy that (adjusted) $A$-boundary for any equivalent signature of $G$. We will repeatedly take advantage of this fact in this paper by freely choosing any orientation and equivalent signature of a given signed graph. We will also use the following result. 

\begin{proposition}[Li, Luo, Ma, Zhang \cite{LLMZ}]\label{equivAcon} Let $G$ be a connected 2-unbalanced signed graph and let $A$ be an abelian group. Then the
following statements are equivalent:
\begin{enumerate}
\item $G$ is $A$-connected.
\item Given any $\overline{f} : E(G) \rightarrow A$ and orientation of $G$, there exists an $A$-flow $f : E(G) \rightarrow A$ with this same orientation such that $f(e)\neq \overline{f}(e)$ for every $e\in E(G)$.
\end{enumerate}
\end{proposition}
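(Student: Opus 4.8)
The plan is to prove the two implications separately, with both resting on one auxiliary fact: for a connected \emph{unbalanced} signed graph $G$ (which $2$-unbalanced certainly is) and any abelian group $A$, the boundary map $f\mapsto \partial f$ sending $A^{E(G)}$ (with respect to any fixed orientation $\tau_0$) to the set of functions $V(G)\to A$ is \emph{surjective onto the set of $A$-boundaries of $G$}. I would prove this by a ``routing'' argument: fix a spanning tree, order the vertices $v_1,\dots,v_n$, start from the zero function, and at step $i$ route the element $\beta(v_i)-\partial g(v_i)$ along a single tree edge leaving $v_i$ toward the yet-unprocessed part of the tree, so that after $n-1$ steps $\partial g(v_i)=\beta(v_i)$ for all $i<n$. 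The only possible obstruction is at $v_n$: since both $\sum_v \partial g(v)$ and $\sum_v\beta(v)$ lie in $2A$, we will have $\partial g(v_n)=\beta(v_n)+2c$ for some $c\in A$. To kill this defect I would use unbalancedness: choose a negative cycle $C$, and (tracking a $\pm1$ sign as one traverses $C$, together with the appropriate bidirected orientation at each negative edge) define a function supported on $E(C)$ with constant ``value'' $-c$ whose boundary is $0$ at every vertex of $G$ except at one distinguished vertex of $C$, where it equals $-2c$; relabelling so that this vertex is $v_n$ and adding this function to $g$ corrects the defect without disturbing any other boundary, yielding $\partial g=\beta$.

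Granting this, the implication $(2)\Rightarrow(1)$ is short. Given an $A$-boundary $\beta$ of $G$, use the auxiliary fact to produce $g:E(G)\to A$ and an orientation $\tau_0$ with $\partial g=\beta$. Apply $(2)$ with $\overline f:=-g$ and orientation $\tau_0$ to obtain an $A$-flow $h$ using the orientation $\tau_0$ with $h(e)\neq -g(e)$ for every $e$. Then $f:=g+h$, with orientation $\tau_0$, satisfies $\partial f=\partial g+\partial h=\beta+0=\beta$ (boundary is linear in $f$ for a fixed orientation) and $f(e)=g(e)+h(e)\neq 0$ for every $e$. Thus $f$ and the orientation $\tau_0$ witness that $G$ satisfies $\beta$, and since $\beta$ was arbitrary, $G$ is $A$-connected.

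For $(1)\Rightarrow(2)$, let $\overline f:E(G)\to A$ and an orientation $\tau_0$ be given, and set $\beta:=-\partial \overline f$ (computed with $\tau_0$); since $\sum_v\partial\overline f(v)\in 2A$, $\beta$ is an $A$-boundary of $G$. Because $G$ is $A$-connected, $G$ satisfies $\beta$, and — using the fact noted before the proposition that $\beta$ can be satisfied using \emph{any} orientation — we may take a witnessing function $f':E(G)\to A\setminus\{0\}$ that uses the orientation $\tau_0$, with $\partial f'=\beta$. Then $f:=\overline f+f'$, with orientation $\tau_0$, is an $A$-flow ($\partial f=\partial\overline f+\partial f'=\partial\overline f-\partial\overline f=0$) whose orientation agrees with that of $\overline f$, and $f(e)=\overline f(e)+f'(e)\neq \overline f(e)$ for every $e$ since $f'(e)\neq 0$. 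This is precisely statement $(2)$.

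\textbf{Main obstacle.} The real content is the auxiliary surjectivity fact, and within it the step where unbalancedness enters: one must check that a single negative cycle can absorb an arbitrary defect in $2A$ while leaving every other boundary untouched, which means being careful about the bidirected orientations at the negative edges of the cycle and verifying the telescoping cancellation at the non-distinguished vertices. The remaining work is bookkeeping with orientations and switches, routine given the remarks preceding the proposition, but one must consistently exploit the freedom to fix a common orientation $\tau_0$ so that the flow produced in each half of the argument can be added to $\overline f$ (or $g$) with no sign issues.
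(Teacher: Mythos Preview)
The paper does not prove this proposition; it is quoted as a result of Li, Luo, Ma, and Zhang \cite{LLMZ} and used without further justification. There is therefore no in-paper proof to compare your argument against.

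That said, your argument is correct. The auxiliary surjectivity fact is exactly the right engine for both directions, and your treatment of the defect $2c$ via a negative cycle is the standard way to exploit unbalancedness: with a single negative edge on the cycle and a consistent direction on the positive edges, assigning a common value $a$ produces boundary $0$ at all but one vertex, where it is $\pm 2a$. The only cosmetic fix is in the ordering: rather than ``relabelling so that this vertex is $v_n$'' after the fact, simply choose the root $v_n$ of your leaf-peeling order to lie on a fixed negative cycle from the outset, so the tree-routing step and the cycle-correction step mesh without rearrangement. Both implications then go through by the linear bookkeeping you describe. Note also that your proof only uses that $G$ is connected and \emph{unbalanced}; the stronger hypothesis of $2$-unbalanced is not needed for the equivalence itself, though it is the standing assumption elsewhere in the paper.
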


Note that while there is a fixed orientation in Proposition \ref{equivAcon}(2), following the above discussion we are free to choose any equivalent signature, as well as any orientation of our signed graph $G$, provided we adjust $\overline{f}$ accordingly.

The assumption of 2-unbalanced in the above proposition is standard when discussing $A$-connectivity in signed graphs. This is because it can easily be argued that a signed graph with exactly one negative edge cannot have an nz $A$-flow for any odd-ordered $A$. 
A signed graph is \emph{flow-admissible} if it admits a nz $k$-flow for some positive integer $k$. Bouchet \cite{Bouchet} proved that a connected signed graph is flow-admissible iff it is 2-unbalanced and there is no cut edge $e$ such that $G-e$ has a balanced component.  
In fact, a signed graph with such a cut edge cannot have an nz $A$-flow for any abelian group $A$.
Hence, flow-admissibility is a natural assumption to make in Conjecture \ref{conj: Bouchet group analog}.

Before proceeding to the main content of our paper, it may be useful to think about how one could assign flow in a signed graph.  Given a negative cycle $C$ with one negative edge, if we try to assign a flow $f=a$ in a consistent direction around $C$, we will get one vertex $v$ with  $\partial f(v) =\pm 2a$. This is always non-zero when the group has odd order. However, if we pair up two such negative cycles, joined by a path, then we can get a flow on the overall graph by putting a flow of value $2a$ on the path directed from one cycle to the other. Indeed, given our rules about switching, this works for any \emph{barbell}, that is, any union of two unbalanced cycles joined by a (possibly trivial) path. It is not difficult to prove that any nz $A$-flow (or nz integer flow, see eg. Bouchet \cite{Bouchet}) on a signed graph $G$ is the sum of nz $A$-flows (or nz integer flow) on positive cycles or barbells when $|A|$ is odd; when $|A|$ is even, we must add negative cycles to this list.
Our preference when assigning flows is to find a positive cycle, and one way of doing this is to find a \emph{theta} subgraph, namely a set of three internally-disjoint $(x,y)$-paths for some pair of vertices $x,y$ (which can be drawn to look like the Greek letter theta). Regardless of the signature on a theta, at least one of the ${3 \choose 2}$ cycles created by the three paths will be positive. Also important in terms of assigning flow is the notion of a \emph{base} of a signed graph, which is a maximal spanning subgraph which contains neither balanced cycles nor barbells. A base in a signed graph is an analog of a spanning tree in an unsigned graph, where the addition of a single edge creates a structure that may be used to assign flow. In particular, if $G$ is a connected signed graph, then a \emph{connected base} of $G$ is a spanning tree if $G$ is balanced, or a spanning tree plus one extra edge forming a negative cycle if $G$ is unbalanced.

\section{Reductions}

This section establishes certain properties of a minimum counterexample to Theorem \ref{main}; the aim of this section is the following lemma. For the lemma, and the remainder of the document, if $G$ is a signed graph and $X\subseteq V(G)$, we denote by $\delta(X)$ the set of edges with exactly one endpoint in $X$; when $X=\{v\}$ we write $\delta(v)$ in place of $\delta(\{v\})$.

\begin{lemma}\label{lem:reduction}
Let $G$ be a $3$-edge-connected, 2-unbalanced signed graph, and let $A$ be an abelian group with $|A|\geq 6$. Suppose $G$ is not $A$-connected and suppose that $G$ is chosen to be minimum with respect to $\sum_{v \in V(G)}|\deg(v)-3|$, and subject to that with respect to $|V(G)|$. Then $G$ is simple, cubic, and 3-connected. Moreover, if there exists $X \subseteq V(G)$ with $G[X]$ balanced and either 
\begin{enumerate}[(i)]
    \item $|\delta(X)| =3$, or
    \item $|\delta(X)| \leq 5$, and $G[X]$ is planar,
\end{enumerate}  
then $G[X]$ is a (possibly trivial) path.
\end{lemma}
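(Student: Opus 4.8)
The plan is a minimum-counterexample argument. Fix $A$ with $|A|\geq 6$ and let $G$ be as in the hypotheses --- $3$-edge-connected, $2$-unbalanced, not $A$-connected --- chosen to minimize first $\mu(G):=\sum_v|\deg(v)-3|$ and then $|V(G)|$; the goal is to repeatedly exhibit a ``smaller'' graph with the same three properties, contradicting minimality. The recurring tool is a clean-contraction principle: if $H\subseteq G$ is connected, balanced, and $A$-connected as an ordinary (re-signed all-positive) graph, then $G$ is $A$-connected iff $G/H$ is. This is the result of Li--Luo--Ma--Zhang on contracting $A$-connected subgraphs, and the point for us is that, by Jaeger--Linial--Payan--Tarsi together with $|A|\geq 6$, every $3$-edge-connected balanced subgraph qualifies. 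One also checks that contracting a positive edge, contracting a connected balanced subgraph, and the surgeries below all preserve $3$-edge-connectivity and $2$-unbalancedness (the latter by tracking negative cycles).

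For cubicity: $3$-edge-connectivity forces $\deg(v)\geq 3$, so if $G$ is not cubic some $\deg(v)\geq 4$. Deleting a positive loop is free and lowers $\mu$, and negative loops (which force $\deg(v)\geq 5$) are handled separately; for $\deg(v)\geq 4$ I would split $v$ into two vertices of degree $\geq 3$ joined by a new positive edge $e_0$, using a Mader-type splitting-off/submodularity argument to keep $G$ $3$-edge-connected. Since $G$ is recovered by contracting the positive edge $e_0$, the split graph is again $2$-unbalanced and not $A$-connected, with $\mu$ strictly smaller. Once $G$ is cubic, simplicity is automatic here: two parallel edges between $u,v$ give $|\delta(\{u,v\})|\leq 2$, and three parallel edges make $G$ fail to be $2$-unbalanced. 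For $3$-connectivity, suppose $\{x,y\}$ is a $2$-cut; using cubicity (which also rules out cut vertices) one gets that $G-\{x,y\}$ has exactly two components $C_1,C_2$, each joined to $\{x,y\}$ by exactly three edges, so each $\delta(V(C_i))$ is a $3$-edge-cut. One then applies the ``moreover'' analysis to the smaller side, using simplicity of $G$ to exclude $|V(C_i)|=1$.

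For the ``moreover'' part I would argue by contradiction, choosing a counterexample set $X$ --- $G[X]$ balanced, $|\delta(X)|\leq 5$, either $|\delta(X)|=3$ or $G[X]$ planar, and $G[X]$ not a path --- with $|X|$ minimum. Then $G[X]$ is connected (a disconnected $G[X]$ would make $|\delta(X)|$ a sum of two integers each $\geq 3$); $G[X]$ is not a tree (a tree that is not a path has $\geq 3$ leaves, each contributing two edges to $\delta(X)$, forcing $|\delta(X)|\geq 6$); and $G[X]$ has no bridge (otherwise its two bridge-sides would be smaller balanced sets of cut $\leq 5$, hence paths by minimality, making $G[X]$ a tree). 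So $G[X]$ is $2$-edge-connected and contains a cycle. If $G[X]$ is $3$-edge-connected it is $A$-connected as an ordinary graph, and I would replace it by a path on $|\delta(X)|-2$ vertices (a single vertex when $|\delta(X)|=3$), attaching the edges of $\delta(X)$ two at each end and one at each interior vertex: the clean-contraction argument shows this preserves non-$A$-connectivity, while a careful choice of attachment preserves $3$-edge-connectivity, giving a smaller counterexample. If $G[X]$ is only $2$-edge-connected, I would recurse on the sides of a $2$-edge-cut; minimality drives these sides down to single vertices, which (with simplicity of $G$ and more edge-counting) pins $G[X]$ to a short list of small graphs --- $K_3$, $K_{2,3}$, and their analogues when $|\delta(X)|\in\{4,5\}$ --- each of which is $A$-connected as an ordinary graph for $|A|\geq 6$; this is where planarity enters in case (ii), bounding the irreducible configurations.

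I expect the main obstacle to be exactly this last analysis: carrying out the surgeries on a balanced piece that is \emph{not} directly contractible --- i.e. when $G[X]$ is only $2$-edge-connected, or when $|\delta(X)|\in\{4,5\}$, so that collapsing $G[X]$ to a point would raise $\mu$ and one must substitute a path instead. The delicate points are (a) the edge-counting needed to locate a genuinely smaller balanced piece with a small enough cut, which leans heavily on $G$ being cubic and simple, and (b) verifying after each substitution that the ambient graph stays $3$-edge-connected, $2$-unbalanced, and not $A$-connected --- the last being precisely what the clean-contraction principle (and hence the Jaeger--Linial--Payan--Tarsi bound $|A|\geq 6$ for balanced graphs) is there to provide. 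The split of hypotheses --- $|\delta(X)|=3$ versus $|\delta(X)|\leq 5$ with planarity --- should ultimately reflect which minimal irreducible pieces one can certify as $A$-connected.
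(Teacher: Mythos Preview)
Your overall architecture matches the paper's: split high-degree vertices to reach cubic (the paper packages this as a Proposition showing one can always uncontract at a vertex of degree $\geq 4$ while staying 3-edge-connected and 2-unbalanced), deduce simplicity and 3-connectivity, then for the ``moreover'' choose $X$ minimal, show $G[X]$ is $A$-connected, contract it, and uncontract back to cubic. Your treatment of case~(i) is essentially the paper's; the precise JLPT statement used is that a 3-edge-connected balanced graph \emph{minus a degree-3 vertex} is $A$-connected, which applies to $G[X]$ once you restore $V(G)\setminus X$ as a single vertex.

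The real divergence, and a genuine gap, is case~(ii). First, your dichotomy ``$G[X]$ is 3-edge-connected / only 2-edge-connected'' is off: since $G$ is cubic, each edge of $\delta(X)$ forces a vertex of degree $\leq 2$ in $G[X]$, so $G[X]$ is \emph{never} 3-edge-connected and your first branch is vacuous. Second, your recursion on 2-edge-cuts does not close when $|\delta(X)|=5$: a 2-edge-cut of $G[X]$ can split the five boundary edges as $1$ and $4$, giving one side $Y$ with $|\delta_G(Y)|=6$, which escapes the minimality hypothesis entirely. You then have no handle on that side, and the promised ``short list of small graphs'' is not actually pinned down; nor is it clear how planarity would enter your enumeration.

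The paper replaces all of this with one clean lemma: \emph{any planar, 2-edge-connected, balanced signed graph with at most five minimal 2-edge-boundaries is $A$-connected for $|A|\geq 6$}. The proof is a short induction: the degree hypothesis plus Euler's formula forces a facial cycle of length at most $5$; being balanced, that cycle is positive and hence $A$-connected (there are $|A|-1\geq 5$ nonzero circulations to dodge at most $5$ forbidden values); contract it and repeat. This is precisely where planarity is spent, and it dispatches case~(ii) uniformly with no case analysis. After that, the paper contracts $G[X]$ and uncontracts the resulting degree-4 or degree-5 vertex once or twice (via the same uncontraction Proposition) to return to a strictly smaller cubic counterexample --- exactly your ``replace by a path'' surgery, but with the 3-edge-connectivity and 2-unbalancedness of the intermediate $G/X$ checked rather than asserted.
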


We prove Lemma~\ref{lem:reduction} at the end of this section, after establishing certain preliminaries. 

For a signed graph G and edge $e\in E(G)$ the \emph{contraction $G/e$} is obtained from $G$ by identifying the two ends of $e$, and for any loops created by this identification, deleting them if they are positive and keeping them if they are negative. Given $X\subseteq E(G)$ (or $X\subseteq G$), the signed graph $G/X$ is obtained from $G$ by contracting every edge in $X$ (or $E(X)$).

Our first result towards Lemma \ref{lem:reduction} is about a reversal of the contraction operation on positive edges. Suppose $G$ is a signed graph, $v \in V(G)$ with $\deg(v) \geq 4$, and $e,f$ are distinct edges incident to $v$ whose other ends are $v_e, v_f$ respectively. We \emph{uncontract at $v$ with $\{e,f\}$} to get a new signed graph by adding new vertex $v'$, changing the ends of $e,f$ to be $v_ev'$ and $v_fv'$ respectively, and adding a positive edge $vv'$. See Figure \ref{fig:uncontract}. 

\begin{figure}[htb]
    \centering
    \includegraphics[height=2.2cm]{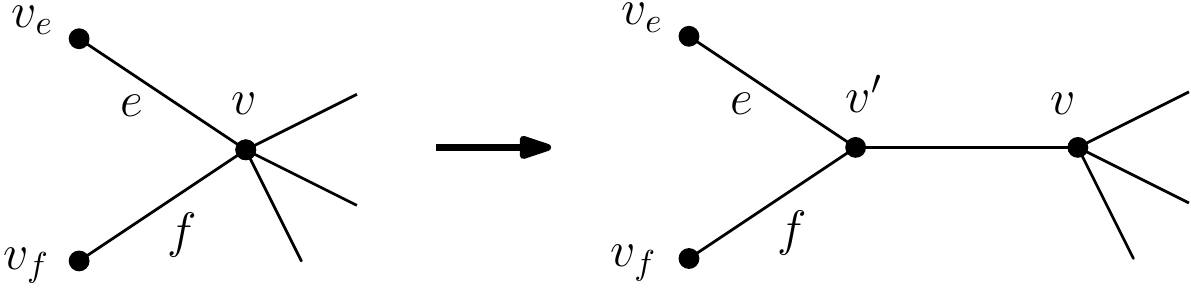}
    \caption{Uncontracting at $v$ with $\{e,f\}$.}
    \label{fig:uncontract}
\end{figure}

\begin{proposition}\label{prop:can uncontract and keep 2-unbal 3ec}
Let $G$ be a 2-unbalanced, 3-edge-connected signed graph on at least two vertices. Let $v \in V(G)$ have degree at least four, and $e \in E(G)$ be incident to $v$. Then there is an edge $e'$ incident to $v$ so that the graph $G'$, obtained from $G$ by uncontracting at $v$ with $\{e,e'\}$, is 2-unbalanced and 3-edge-connected.
\end{proposition}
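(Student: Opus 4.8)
The plan is to choose the partner edge $e'$ greedily and then verify the two required properties. First I would observe that uncontracting at $v$ with $\{e,e'\}$ can only fail to preserve 3-edge-connectivity in a very restricted way: the new vertex $v'$ has degree exactly $3$ (it meets $e$, $e'$, and the new positive edge $vv'$), and $v$ has degree $\deg_G(v)-1\geq 3$, so the only edge-cuts of size at most $2$ in $G'$ that are not already edge-cuts of $G$ must separate $\{v,v'\}$ or be ``inherited'' from a small cut of $G$ through the identification. Since $G$ is 3-edge-connected, any edge-cut of $G'$ of size $\leq 2$ must therefore be one that isolates neither a single vertex nor uses the edge $vv'$ on both sides trivially; a short case analysis shows the only obstruction is when $e$ and $e'$, together with a single further edge, already form a 3-edge-cut of $G$ whose removal separates $\{v_e\text{-side}\}$ from $\{v_f\text{-side}\}$ in a way that puts $v$ on the wrong side. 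Concretely, $G'$ fails to be 3-edge-connected exactly when there is a 2-edge-cut of $G'$, and by the degree bookkeeping this forces a set $Y\subseteq V(G)$ with $v\notin Y$, $v_e$ or $v_f \in Y$, and $|\delta_G(Y)\cup\{e,e'\}|\le 2$ after accounting — i.e. $e,e'$ are ``parallel'' across a small cut. I would phrase the bad event as: $e'$ is a bad choice iff $G$ has a $3$-edge-cut $\delta_G(Y)$ with $v\notin Y$, $\{e,e'\}\subseteq \delta_G(Y)$, and the third edge of the cut also incident to $v$. Because $\deg_G(v)\geq 4$, at most... I would count: each ``bad'' 3-edge-cut through $v$ forbids at most the edges of $\delta(v)$ lying in it besides $e$; if every choice of $e'$ were bad, then every edge of $\delta(v)\setminus\{e\}$ lies in some 3-edge-cut paired with $e$, and a submodularity/uncrossing argument on these cuts produces a contradiction with 3-edge-connectivity (or with $|V(G)|\geq 2$). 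This counting/uncrossing step is the one I expect to be the main obstacle, and I would handle it by taking $Y$ minimal (or minimizing $|Y|$) among all bad cuts for the various candidate $e'$, then uncrossing two such $Y$'s to get a smaller one, eventually forcing $Y$ or its complement to behave degenerately.

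For the 2-unbalanced property, the key point is that uncontracting with a \emph{positive} new edge does not change the cycle space structure relevant to balance: a cycle of $G'$ either avoids $v'$ (hence is a cycle of $G$) or passes through $v'$ via $e,e'$, in which case routing it instead through $v$ along $vv'$ (a positive edge) gives a cycle of $G$ with the same sign. Thus $G'$ and $G$ have ``the same'' negative cycles under the obvious correspondence, and since every signature of $G'$ restricts to a signature of $G$ (the edge $vv'$ being contractible), a signature of $G'$ with at most one negative edge would yield one for $G$ — contradicting that $G$ is 2-unbalanced. More carefully, I would argue via the switching characterisation: switch at $v'$ in $G'$ if needed so that $vv'$ is positive; then contracting $vv'$ returns $G$ with an equivalent signature, and the number of negative edges can only decrease under this contraction, so $G'$ is $2$-unbalanced whenever $G$ is. This half of the proof is routine.

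Putting it together: I would first show that among the $\deg_G(v)-1\geq 3$ candidates for $e'$, at most $\deg_G(v)-2$ are bad for 3-edge-connectivity (via the uncrossing argument above), so at least one good choice exists; then note that every choice of $e'$ preserves 2-unbalancedness; hence a good $e'$ exists preserving both properties, as required. The write-up should emphasise the precise statement of the ``bad event'' and the uncrossing lemma for 3-edge-cuts through a fixed vertex, since that is where all the real work lies — the rest is bookkeeping about half-edges and the positivity of the new edge $vv'$.
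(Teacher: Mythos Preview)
Your argument for 2-unbalancedness has a genuine gap. You claim that every choice of $e'$ preserves 2-unbalancedness, but this is false. Suppose $G'$ has an equivalent signature with a single negative edge, and that edge happens to be the new edge $vv'$. When you switch at $v'$ to make $vv'$ positive, you flip the signs of $e$ and $e'$ as well, so the resulting signature of $G'$ has exactly the two negative edges $\{e,e'\}$. Contracting $vv'$ then yields a signature of $G$ with exactly two negative edges, which is perfectly compatible with $G$ being 2-unbalanced. So for some choices of $e'$ the uncontracted graph $G'$ really can be only 1-unbalanced, and your ``routine'' half of the proof does not go through.

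This interacts badly with your 3-edge-connectivity count. You only claim that at least one candidate $e'$ is good for 3-edge-connectivity (``at most $\deg_G(v)-2$ are bad''); if that one good candidate is precisely the one that fails 2-unbalancedness, the proof collapses. The paper handles this by proving sharper counts on both sides: it shows that at most \emph{one} choice of $e'$ can fail 2-unbalancedness (if two choices $f,f'$ both failed, one obtains equivalent signatures of $G$ with negative-edge sets $\{e,f\}$ and $\{e,f'\}$, whose symmetric difference $\{f,f'\}$ would then be a 2-edge-cut of $G$), and that at least \emph{two} choices of $e'$ preserve 3-edge-connectivity (by a short case analysis on whether $G-v$ is disconnected, has a cut-edge, or is 2-edge-connected). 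Pigeonhole then finishes. Your uncrossing sketch for 3-edge-connectivity could perhaps be tightened to give two good choices rather than one, but as written it does not, and the 2-unbalancedness side needs to be redone entirely along the lines above.
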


\begin{proof}
This proof is in two claims.

\setcounter{claim}{0}

\begin{claim}\label{claim:at most 1 edge not 2-unbal}
There is at most one edge $f\neq e$ incident to $v$ so that the signed graph obtained from $G$ by uncontracting at $v$ with $\{e,f\}$ is not 2-unbalanced.
\end{claim}
 
\begin{proofc}
Suppose for a contradiction that two such edges exist, say $f,f'$. The new graphs have a signature with exactly one negative edge, which must be the uncontracted edge. (Otherwise that signature restricted to $G$ is a signature of $G$ with only one negative edge.) This means there is a signature $\sigma$ of $G$ whose set of negative edges is $\{e,f\}$, and a signature  $\sigma'$ of $G$ whose set of negative edges is $\{e, f'\}$. But, the edges on which $\sigma$ and $\sigma'$ differ must be an edge-cut, by the definition of switching. Hence $\{f,f'\}$ is an edge-cut of size 2 in $G$, a contradiction.
\end{proofc}

\begin{claim}\label{claim:two edges keep 3ec}
There exist two edges $f_1, f_2 \neq e$ incident to $v$ such that the graphs obtained from $G$ by uncontracting at $v$ with $\{e,f_1\}$ and $\{e, f_2\}$ are 3-edge-connected.
\end{claim}

\begin{proofc}
    If the graph after uncontracting is not 3-edge-connected, then any edge-cut of size at most 2 must use the uncontracted edge. Consider $G-v$. If $G-v$ is disconnected then let $S$ be the component of $G-v$ containing the other end of $e$. By 3-edge-connectivity of $G$, there are at least three edges from $v$ to each component of $G-v$. Choosing edges $f_1,f_2$ incident to $v$ with an end in some component $S' \neq S$ satisfies the claim. If $G-v$ has a cut-edge $d$, let $S$ be the component of $G-v-d$ containing the other end of $e$. Similar to before, choosing edges $f_1,f_2$ incident to $v$ with an end in some component $S' \neq S$ satisfies the claim. Finally, if $G-v$ is $2$-edge-connected, then any two edges $f_1,f_2 \neq e$ incident to $v$ work.
\end{proofc}

These two claims complete the proof of Proposition~\ref{prop:can uncontract and keep 2-unbal 3ec}.
\end{proof}

\begin{observation}\label{obs:G' Aconn}
In Proposition \ref{prop:can uncontract and keep 2-unbal 3ec}, if $G'$ is $A$-connected, then so is $G$.
\end{observation}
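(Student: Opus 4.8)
The starting point is the observation that $G$ is obtained from $G'$ by contracting the single \emph{positive} edge $vv'$ created during the uncontraction; that is, $G = G'/(vv')$. So the plan is: take an arbitrary $A$-boundary of $G$, lift it to an $A$-boundary of $G'$, satisfy that boundary using the hypothesis that $G'$ is $A$-connected, and then push the resulting nowhere-zero flow back down to $G$.

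In detail, let $\beta$ be an $A$-boundary of $G$. Every vertex of $G$ other than $v$ is also a vertex of $G'$, and $v$ corresponds in $G'$ to the pair $\{v,v'\}$. Define $\beta' : V(G') \to A$ by $\beta'(u) = \beta(u)$ for all $u \in V(G)$ (in particular $\beta'(v)=\beta(v)$) and $\beta'(v') = 0$. Then $\sum_{u \in V(G')}\beta'(u) = \sum_{u \in V(G)}\beta(u)$, which equals $2a$ for some $a\in A$ because $\beta$ is an $A$-boundary of $G$; hence $\beta'$ is an $A$-boundary of $G'$. Since $G'$ is $A$-connected, there is an orientation $\tau'$ of $G'$ and a function $f' : E(G') \to A\setminus\{0\}$ with $\partial f' = \beta'$. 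Let $f = f'|_{E(G)}$, and let $\tau$ be the orientation of $G$ obtained from $\tau'$ by reinterpreting the half-edges of $e$ and $e'$ that were incident to $v'$ as half-edges incident to $v$, keeping their $\tau$-values. Then $f$ is nowhere zero since $f'$ is.

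It remains to check $\partial f = \beta$. For each $u \neq v$ we have $\partial f(u) = \partial f'(u) = \beta'(u) = \beta(u)$. For $v$, its half-edges in $G$ are exactly the half-edges of $G'$ incident to $v$ or to $v'$, with the two half-edges of $vv'$ removed; since $vv'$ is positive, its two half-edges carry opposite $\tau'$-values, so together they contribute $0$ to $\partial f'(v) + \partial f'(v')$. Therefore $\partial f(v) = \partial f'(v) + \partial f'(v') = \beta'(v) + \beta'(v') = \beta(v)$, and so $\partial f = \beta$. Thus every $A$-boundary of $G$ can be satisfied, i.e., $G$ is $A$-connected. There is essentially no obstacle in this argument; the one point that genuinely uses something is that the uncontraction adds a \emph{positive} edge, so that the contributions of its two oppositely-oriented half-edges cancel on contraction — the analogous reduction would fail for a negative edge. (Alternatively, one can run the same argument through Proposition~\ref{equivAcon}(2), extending a prescribed avoidance function $\overline f$ from $E(G)$ to $E(G')$ by an arbitrary value on $vv'$.)
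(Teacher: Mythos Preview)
Your proof is correct and follows essentially the same approach as the paper: extend $\beta$ to $\beta'$ on $G'$ by setting $\beta'(v')=0$, satisfy $\beta'$ using $A$-connectivity of $G'$, and restrict the resulting function to $E(G)$. You have simply spelled out in detail the verification that $\partial(f'|_{E(G)}) = \beta$ and why positivity of $vv'$ is needed, whereas the paper states this in one line.
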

\begin{proof}
Let $\beta$ be any $A$-boundary on $G$, and extend to an $A$-boundary $\beta'$ on $G'$ by setting $\beta(v')=0$. Since $G'$ is $A$-connected there is a function $f'$ satisfying $\partial f' =\beta'$. But since the uncontracted edge $vv'$ is positive, $f'|_{E(G)}$ satisfies $\beta$ on $G$.
\end{proof}

\begin{lemma}\label{lem: reduce to cubic}
Let $A$ be an abelian group where $|A| \geq 2$. Let $G$ be a 2-unbalanced, 3-edge-connected signed graph. Suppose that $G$ is not $A$-connected and, subject to that, $G$ is minimal with respect to $\sum_{v \in V(G)}|\deg(v)-3|$. Then $G$ is simple, cubic, and 3-connected.
\end{lemma}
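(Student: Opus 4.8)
The plan is to show that if $G$ is a minimal counterexample (with respect to $\sum_v |\deg(v)-3|$), then every vertex has degree exactly $3$, that $G$ has no loops or parallel edges, and that $G$ is $3$-connected rather than merely $3$-edge-connected. The three assertions are proved essentially independently, each by a ``reduction'' that produces a smaller (in the relevant measure) signed graph which is still $2$-unbalanced and $3$-edge-connected, hence $A$-connected by minimality, and from which $A$-connectivity of $G$ follows --- a contradiction.

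First I would handle high-degree vertices. If some $v$ has $\deg(v)\ge 4$, apply Proposition~\ref{prop:can uncontract and keep 2-unbal 3ec} to uncontract at $v$ with a suitable pair $\{e,e'\}$; the resulting $G'$ is $2$-unbalanced and $3$-edge-connected, and crucially $\sum_{w\in V(G')}|\deg(w)-3| = \sum_{w\in V(G)}|\deg(w)-3| - 2 < \sum_{w\in V(G)}|\deg(w)-3|$ since we split a vertex of degree $\ge 4$ into two vertices, one of degree $3$ and one of degree strictly less, reducing the total deficiency by exactly $2$. By minimality $G'$ is $A$-connected, so by Observation~\ref{obs:G' Aconn} $G$ is $A$-connected, a contradiction. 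Hence $G$ has maximum degree at most $3$. Since a $3$-edge-connected graph has minimum degree at least $3$, every vertex has degree exactly $3$, i.e.\ $G$ is cubic; note that then the measure $\sum_v |\deg(v)-3|$ is zero and we may henceforth argue using $|V(G)|$ as the secondary measure (as in the statement of Lemma~\ref{lem:reduction}, though Lemma~\ref{lem: reduce to cubic} as stated only claims cubic/simple/$3$-connected).

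Next I would rule out loops and parallel edges. A loop would have to be negative (a positive loop can be deleted without affecting flows, contradicting minimality, and in a cubic graph a negative loop at $v$ leaves $v$ with only one other incident edge, violating $3$-edge-connectivity), so $G$ is loopless. For parallel edges: if $e,f$ join $x$ and $y$, then since $G$ is cubic and $3$-edge-connected, $G$ is either the graph on two vertices with three parallel edges, or there is exactly one further edge at each of $x,y$. One checks small exceptional cases directly; otherwise the plan is to suppress one of the parallel edges (or contract an appropriate configuration) to obtain a smaller $2$-unbalanced $3$-edge-connected signed graph whose $A$-connectivity forces $A$-connectivity of $G$ --- here one would invoke the Li--Luo--Ma--Zhang contractible-configuration machinery alluded to in the introduction, or a direct flow-extension argument exploiting that $|A|\ge 6$ leaves enough room to choose the flow value on the suppressed edge. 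Finally, for $3$-connectivity: a cubic $3$-edge-connected graph that is not $3$-connected has a $2$-vertex-cut $\{u,w\}$; cubic-ness forces this to look like an edge-cut situation, and one contracts one side of the cut to a single vertex (keeping track of the signature so the quotient stays $2$-unbalanced and $3$-edge-connected), applies minimality, and lifts $A$-connectivity back using that the relevant configuration is contractible.

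The main obstacle I expect is the parallel-edges and $3$-connectivity reductions: unlike the clean degree reduction (which is packaged in Proposition~\ref{prop:can uncontract and keep 2-unbal 3ec} and Observation~\ref{obs:G' Aconn}), these require carefully verifying that the contracted/reduced graph remains $2$-unbalanced --- a signed graph can become balanced, or have all-but-one negative edges concentrated badly, under contraction --- and remains $3$-edge-connected, while simultaneously ensuring the lift of a flow/boundary works. The delicacy is entirely on the signed side: one must track negative cycles through the contraction and use that $2$-unbalanced is equivalent to having two edge-disjoint (indeed vertex-disjoint, after switching) structures carrying the unbalance, and handle the small sporadic graphs (like the two-vertex triple edge, or $K_4$-type configurations) by hand. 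The $|A|\ge 6$ hypothesis should be exactly what guarantees enough free values to extend flows across the reduced edges, mirroring the role of $6$ in Seymour's theorem.
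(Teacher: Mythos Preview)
Your reduction to cubic via Proposition~\ref{prop:can uncontract and keep 2-unbal 3ec} and Observation~\ref{obs:G' Aconn} is correct and matches the paper exactly (minor slip: the sum drops by $1$, not $2$, but the direction is all that matters).

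The genuine gap is in the ``simple'' and ``$3$-connected'' steps. You propose flow-extension arguments, contractible configurations, and explicitly invoke $|A|\ge 6$ --- but the lemma only assumes $|A|\ge 2$, so that hypothesis is not available to you here. More importantly, none of this machinery is needed: once $G$ is cubic and $3$-edge-connected, simplicity and $3$-connectedness are purely structural consequences, with no reference to $A$ or to flows at all. A loop at a cubic vertex leaves a single remaining edge, which is a bridge; two parallel edges between $x$ and $y$ in a cubic graph leave one further edge at each of $x,y$, and those two edges form a $2$-edge-cut unless $|V(G)|=2$. The case $|V(G)|=2$ (three parallel edges) cannot be $2$-unbalanced, since switching at one vertex reduces the number of negative edges to at most one. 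Finally, it is a standard fact that a cubic $3$-edge-connected graph on at least four vertices is $3$-connected (edge-connectivity equals vertex-connectivity in cubic graphs). This is exactly how the paper proceeds: after the uncontraction argument, the remaining three sentences of the proof dispose of $|V(G)|\le 2$ and then simply observe that cubic plus $3$-edge-connected forces simple and $3$-connected.

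So your plan for the last two parts, while not unfixable, is headed in the wrong direction --- you are reaching for signed-graph-specific and group-theoretic tools where elementary unsigned graph theory already finishes the job.
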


\begin{proof}
If $|V(G)| =1$, then all edges in $E(G)$ are loops, two of which must be negative. Since any $A$-boundary $\beta$ satisfies $\beta(v)=2a$ for some $a\in A$, we can always satisfy the boundary by making use of those two negative loops. Hence $G$ must be $A$-connected, a contradiction. So we may assume $|V(G)| \geq 2$.

Since $G$ is 3-edge-connected, no vertex has degree less than 3. Suppose for contradiction $v \in V(G)$ has $\deg(v) \geq 4$, and let $e$ be an edge incident to $v$.
By Proposition \ref{prop:can uncontract and keep 2-unbal 3ec}, there exists an edge $f$ incident to $v$ so that the graph $G'$
obtained from $G$ by uncontracting at $v$ with $\{e,f\}$ is 2-unbalanced and 3-edge-connected. Note that the new vertex $v'$ incident to $e, f$ in $G'$ now contributes zero to the sum we had minimized, and $v$ contributes one less, hence $G'$ is $A$-connected by minimality of $G$. But now $G$ is $A$-connected by Observation \ref{obs:G' Aconn}, a contradiction. Therefore $G$ is cubic.

Since $G$ is cubic, $|V(G)|$ is even. If $|V(G)| = 2$, then $G$ has just three edges which are parallel, so it cannot be $2$-unbalanced. Therefore $|V(G)| \ge 4$. Since $G$ is cubic and 3-edge-connected, it has no loops or parallel edges, and is therefore simple. Finally, it is straightforward to see that a cubic, 3-edge-connected graph on at least $4$ vertices must also be 3-connected.
\end{proof}

In the remainder of this section we employ the following two results, both of which were discussed but not stated precisely in the introduction.

\begin{theorem}[Li, Luo, Ma, Zhang \cite{LLMZ}
]\label{thm:flow-contract configs} Let $G, H$ be signed graphs with $H\subseteq G$ and such that each of $G, H$ is either balanced or 2-unbalanced. Let $A$ be an abelian group, and suppose that $H$ is $A$-connected. Then $G$ is $A$-connected if and only if $G/H$ is $A$-connected.
\end{theorem}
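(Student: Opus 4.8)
The plan is to prove both implications by the same ``split--solve--splice'' device. Write $v_H$ for the vertex of $G/H$ onto which $H$ collapses. Recalling that re-signing is free (by the discussion preceding Proposition~\ref{equivAcon}), I would first normalise: assume every positive edge of $G$ with both ends in $V(H)$ already lies in $H$ (this changes neither $G/H$ nor the $A$-connectivity or (un)balancedness of $H$), and when $H$ is balanced assume every edge of $H$ is positive, so that $G/H$ is then loop-free. The common mechanism is that an $A$-boundary splits into a part ``demanded at the interface'' and a residual part carried by $V(H)$: since the half-edges incident to $V(H)$ reassemble exactly into $\partial(\cdot)(v_H)$, both parts are themselves $A$-boundaries (of $G/H$ and of $H$ respectively), so one solves each with the hypothesised $A$-connectivity and glues the two nowhere-zero functions.

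For ``$H$ and $G/H$ both $A$-connected $\implies$ $G$ is $A$-connected'': given an $A$-boundary $\beta$ of $G$, let $\beta'$ on $V(G/H)$ agree with $\beta$ off $v_H$ and satisfy $\beta'(v_H)=\sum_{v\in V(H)}\beta(v)$; since $\sum\beta'=\sum\beta\in 2A$ this is an $A$-boundary, so $A$-connectivity of $G/H$ gives a nowhere-zero $f'$ on $E(G/H)$ with $\partial f'=\beta'$. Every edge of $G/H$ is an edge of $G$, so $f'$ assigns nonzero values to all edges of $G$ lying outside $H$; writing $c(v)$ for the contribution of those edges to the boundary at $v\in V(H)$, a telescoping of the half-edges incident to $V(H)$ gives $\sum_{v\in V(H)}c(v)=\partial f'(v_H)=\sum_{v\in V(H)}\beta(v)$, so $\gamma(v):=\beta(v)-c(v)$ is an $A$-boundary of $H$. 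Then $A$-connectivity of $H$ gives a nowhere-zero $f''$ on $E(H)$ with $\partial f''=\gamma$, and the function equal to $f'$ outside $H$ and to $f''$ on $E(H)$ is nowhere zero with boundary $\beta$: off $V(H)$ this is clear, and at each $v\in V(H)$ it is $c(v)+\gamma(v)=\beta(v)$. The converse is easier and uses only that $H$ (hence $G/H$) is balanced or $2$-unbalanced: lift an $A$-boundary $\beta'$ of $G/H$ to $\beta$ on $G$ (unchanged off $V(H)$, with $\beta'(v_H)$ placed on a single vertex of $V(H)$ and $0$ on the rest), pick a nowhere-zero $f$ on $E(G)$ with $\partial f=\beta$, and verify that its restriction to the edges surviving in $G/H$ is nowhere zero with boundary $\beta'$ --- off $v_H$ immediately, and at $v_H$ via the identity $\sum_{v\in V(H)}\partial\!\left(f|_{E(H)}\right)\!(v)\in 2A$ and the same telescoping.

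The step I expect to be genuinely delicate is making the telescoping precise when $H$ is unbalanced. Contracting $E(H)$ then (i) leaves negative loops at $v_H$, whose flow values contribute $\pm2(\cdot)$ rather than $0$ to $\partial(\cdot)(v_H)$, and (ii) requires switches, which flip signs and orientations on the edges of $\delta(V(H))$; moreover, in the ``$\Longleftarrow$'' direction a surviving negative loop at $v_H$ may be an edge of $E(H)$ on which $f'$ has already committed a value, and plain $A$-connectivity of $H$ does not obviously let one re-achieve $\gamma$ with that edge held fixed. I would deal with this by cases: $H$ balanced, where $G/H$ is loop-free and the argument above is literal; and $H$ unbalanced, where I would either pass to a connected base of $H$ before contracting, or --- probably cleanest --- run everything through the reformulation in Proposition~\ref{equivAcon} (forbid a value on each edge rather than prescribe a boundary), in which the interface splicing is tidiest and the equivalent-signature freedom can be used to put $H$ in convenient form first. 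Apart from this bookkeeping the argument is routine.
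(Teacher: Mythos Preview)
The paper does not prove this theorem; it is quoted from Li--Luo--Ma--Zhang and used as a black box. So there is no in-paper proof to compare against, and I will simply comment on your sketch.

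Your split--solve--splice approach is the standard one and is essentially correct, but your self-diagnosis of the delicate point is slightly off. The worry you raise --- that $f'$ commits values on the negative edges of $H$ which must then be ``held fixed'' when solving on $H$ --- is not a real obstruction: you are free to overwrite those values with $f''$, since the function you ultimately need has its boundary checked in $G$, not in $G/H$, and at each $v\in V(H)$ that check only sees $c(v)+\partial f''(v)$. What actually fails when $H$ is $2$-unbalanced is your telescoping identity $\sum_{v\in V(H)} c(v)=\partial f'(v_H)$: the negative edges of $H$ survive as negative loops at $v_H$ and each contributes $\pm 2f'(e)$ to $\partial f'(v_H)$ but nothing to $\sum_v c(v)$ (since you defined $c$ using only edges of $G$ outside $E(H)$). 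The correct identity is therefore $\sum_{v}c(v)=\partial f'(v_H)-2s$ for some $s\in A$, whence $\sum_v\gamma(v)=2s\in 2A$ and $\gamma$ is still an $A$-boundary of $H$. With this one-line correction the argument goes through uniformly in both the balanced and $2$-unbalanced cases; no detour through Proposition~\ref{equivAcon} or connected bases is needed.

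Two smaller remarks. Your parenthetical ``$H$ (hence $G/H$) is balanced or $2$-unbalanced'' is neither obvious nor, as your own argument shows, needed: the converse direction works by the same half-edge telescoping without any hypothesis on $G/H$. And your normalisation step (absorbing into $H$ every positive edge of $G$ with both ends in $V(H)$) is fine, but you should note that it is done \emph{after} switching $H$ to all-positive in the balanced case, so that the absorbed edges really are positive in the current signature.
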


\begin{theorem}[Jaeger, Linial, Payan, Tarsi \cite{JaegerFrancois1992Gcog}
]\label{thm: JLPT G-v is A-con}
If $G$ is a 3-edge-connected, balanced signed graph, $A$ is an abelian group of order at least 6, and $v \in V(G)$ is a vertex of degree 3, then $G-v$ is $A$-connected.
\end{theorem}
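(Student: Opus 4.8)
The plan is to reduce to the case where $G$ is cubic, and then to realize every $A$-boundary of $G-v$ using a spanning decomposition of $G-v$ of the kind developed, in its signed form, in Section~\ref{sect:decompositions}. Since $G$ is balanced we may re-sign so that every edge is positive and treat $G$ as an ordinary graph. I would induct on $\sum_{w\in V(G)}|\deg(w)-3|$: if some vertex $u\neq v$ has degree at least $4$, uncontract at $u$ as in the proof of Lemma~\ref{lem: reduce to cubic} (which goes through with ``balanced'' in place of ``$2$-unbalanced'', since a positive uncontracted edge preserves balance and the connectivity half of the argument is unaffected), obtaining a $3$-edge-connected balanced $G'$ in which $v$ still has degree $3$ and the parameter $\sum_w|\deg(w)-3|$ has strictly dropped; by induction $G'-v$ is $A$-connected, whence $G-v$ is $A$-connected by Observation~\ref{obs:G' Aconn}. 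So we may assume $G$ is cubic, hence --- aside from the degenerate case $|V(G)|=2$, where $G-v$ is a single vertex --- simple and $3$-connected; let $x_1,x_2,x_3$ be the neighbours of $v$. A short argument from $\deg(v)=3$ and $3$-edge-connectivity of $G$ shows $G-v$ is $2$-edge-connected with $x_1,x_2,x_3$ its only vertices of degree $2$ (if $\{e,f\}$ were an edge-cut of $G-v$, both sides would send at least two edges to $v$, which is impossible).

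Next I would invoke the decomposition --- the unsigned, balanced specialization of Corollary~\ref{cor: decomp general JLPT}: $E(G-v)$ is the disjoint union of a $1$-base $B_1$ (a connected spanning subgraph, hence one containing a spanning tree $T$) and a $2$-base $B_2$ (a spanning subgraph each of whose components is a path or a cycle). Given an $A$-boundary $\beta$ of $G-v$, I would assign flow in three passes, using the reformulation in Proposition~\ref{equivAcon} and re-orienting freely: first, choose nonzero values around the cycles of $B_2$ so as to meet the $\beta$-demand at the vertices of those cycles not covered by $T$; then push flow along each path of $B_2$ from one end to the other, correcting $\beta$ at each interior vertex while keeping at least two admissible values available on the path-edge currently being set; finally process the edges of $T$ from its leaves inward, where each tree-edge value is then forced and must be checked to come out nonzero. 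The purpose of the $2$-base is to guarantee that each edge is ``used'' to correct boundaries at only a bounded number of its incident vertices, so that, starting from the single forbidden value $0$, every edge retains a positive number of admissible values throughout; this bounded count is exactly what pins the threshold at $|A|\ge 6$, and only the size, not the structure, of $A$ is used.

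The main obstacle is two-fold: establishing the $1$-base/$2$-base decomposition of $E(G-v)$ for an arbitrary cubic $3$-connected $G$, and organizing the flow-assignment so that no long cascade of ``$\neq$'' constraints propagated along $T$ can force a tree-edge to equal $0$ --- a naive union bound over the $|V(G)|-2$ tree edges being far too weak, which is precisely why the \emph{structured} decomposition, rather than an arbitrary spanning tree, is needed. In the unsigned setting all of this is due to Jaeger, Linial, Payan, and Tarsi~\cite{JaegerFrancois1992Gcog}, and a self-contained proof here would parallel (the balanced case of) the development in Section~\ref{sect:decompositions}.
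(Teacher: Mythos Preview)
The paper does not prove this theorem; it is quoted from \cite{JaegerFrancois1992Gcog} and used as a black box in the proof of Lemma~\ref{lem:reduction}. So there is no proof in the paper to compare against, and what follows concerns only the internal soundness of your sketch.

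There is a genuine gap, and it stems from a misreading of what a $2$-base is. In this paper (following Seymour), a $2$-base is a set $B$ with $\langle B\rangle_2=E(G)$; it is \emph{not} ``a spanning subgraph each of whose components is a path or a cycle''. Your three-pass algorithm is built on the latter picture --- you speak of ``the cycles of $B_2$'' and ``each path of $B_2$'' and assign values along them --- but a $2$-base need have no such shape. The actual leverage a $2$-base provides is what you see in the proof of Theorem~\ref{thm: Z2 x Z3}: a list of positive cycles $C_1,\dots,C_t$ with $|E(C_i)\setminus(\bigcup_{j<i}C_j\cup B)|\le 2$, used to set the final flow on the \emph{tree} edges, two at a time, so that each avoids its forbidden value. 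Your scheme has the roles reversed: you assign values on $B_2$ first and then ``process the edges of $T$ from its leaves inward, where each tree-edge value is then forced and must be checked to come out nonzero''. You correctly flag this check as the main obstacle, but you never carry it out, and with the mistaken model of $B_2$ there is no mechanism to carry it out; nothing prevents a forced tree-edge value from being~$0$. Relatedly, Corollary~\ref{cor: decomp general JLPT} is the wrong citation: its hypotheses (cyclic $4$-edge-connectivity, no positive cycle of length $\le 5$) do not hold for an arbitrary $3$-edge-connected cubic $G$, and in any case it decomposes $G$, not $G-v$. The decomposition you actually need is the original one of Jaeger et al.\ for $G-v$, alluded to in the introduction.
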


In the following lemma, by a \emph{minimal $2$-edge-boundary} in a signed graph $G$, we mean a set $X \subseteq V(G)$ such that $|\delta(X)| = 2$, and so that for all $Y \subseteq X$, $|\delta(Y)| \neq 2$.

\begin{lemma}\label{lem:balanced at most 5 vert of degree 2 is A con}
Let $A$ be an abelian group with $|A| \ge 6$, and let $G$ be a planar, $2$-edge-connected, balanced signed graph so that the number of minimal 2-edge-boundaries in $G$ is at most 5. 
Then $G$ is $A$-connected.
\end{lemma}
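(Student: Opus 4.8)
The plan is to reduce to a statement about ordinary (all-positive) graphs and then invoke Theorem~\ref{thm: JLPT G-v is A-con}. Since $G$ is balanced, we may re-sign so that every edge of $G$ is positive, hence treat $G$ as an unsigned graph; note that an $A$-boundary of $G$ is then exactly a map $\beta:V(G)\to A$ with $\sum_v\beta(v)=0$ (the ``$2a$'' freedom collapses because there are no negative edges, and $A$-connectivity in the ordinary sense coincides with the signed-graph notion here). So it suffices to show: a planar $2$-edge-connected graph with at most $5$ minimal $2$-edge-boundaries is $A$-connected for $|A|\ge 6$.

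First I would reduce to the $3$-edge-connected case. A $2$-edge-cut $\delta(X)$ in a graph corresponds to a pair of parallel edges after contracting the two sides; more usefully, if $\delta(X)=\{e,f\}$ with $e,f$ parallel (a minimal $2$-edge-boundary with both endpoints realized as a digon $X$ of ``small'' structure), one can suppress. The cleaner route: if $G$ has a $2$-edge-cut $\{e,f\}$ with sides $X$ and $V(G)\setminus X$, then at least one side contains a minimal $2$-edge-boundary, and each side $G[X]+\{\text{edge }e'f'\}$ (adding an edge joining the two attachment vertices) is again planar, $2$-edge-connected, with strictly fewer minimal $2$-edge-boundaries. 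A standard lemma (used in the unsigned group-connectivity literature, e.g.\ via the fact that a path/edge in series is $A$-connected-transparent) lets one conclude $A$-connectivity of $G$ from that of the two smaller pieces: satisfy the boundary on one side using the ``virtual edge'' to carry the leftover $\sum_{v\in X}\beta(v)$ across the cut, then satisfy the other side. Iterating, we are left either with a graph on few vertices (handled directly, e.g.\ at most $5$ minimal $2$-edge-boundaries forces a bounded ``reduced'' graph, and $K_4$ and small cases are $A$-connected by Theorem~\ref{thm: JLPT G-v is A-con} applied after adding a degree-$3$ apex, or checked by hand), or with a planar $3$-edge-connected graph $H$ (no minimal $2$-edge-boundaries at all).

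For the $3$-edge-connected piece $H$: pick any vertex $v$; since $H$ is planar and $3$-edge-connected, there is a face incident to $v$, and adding a new vertex $u$ of degree $3$ joined to three vertices on that face (or simply taking $H' = H$ with a degree-$3$ vertex available — if $H$ already has a vertex $w$ of degree $3$, apply Theorem~\ref{thm: JLPT G-v is A-con} directly to get $H-w$ is $A$-connected, then observe $H = (H-w) + w$ with $w$ of degree $3$ is $A$-connected because adding back a degree-$3$ vertex to an $A$-connected graph preserves $A$-connectivity when $|A|\ge 4$, as in Jaeger et al.). Since $H$ is cubic-or-not, the simplest uniform move: $H$ is $3$-edge-connected and balanced, so by Theorem~\ref{thm: JLPT G-v is A-con} (choosing any degree-$3$ vertex, which exists after the reductions, or by first reducing to cubic as in Lemma~\ref{lem: reduce to cubic}), $H$ minus a degree-$3$ vertex is $A$-connected, and then $H$ itself is $A$-connected.

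The main obstacle I expect is the base cases of the $2$-edge-cut reduction: one must argue that ``at most $5$ minimal $2$-edge-boundaries'' really does bottom out in a small, explicitly $A$-connected list, and that the ``add a virtual edge across a $2$-cut and paste'' step is valid in the group-connectivity (not just flow) setting — this needs the fact that a $2$-edge-cut acts like a single edge in series, so that satisfying a boundary on one side is always possible once the across-cut value is prescribed, which in turn uses that the relevant small graphs are themselves $A$-connected or at least $A$-connected relative to two boundary vertices. I would isolate this as a sub-lemma (``a $2$-edge-connected planar graph with a single virtual edge designated is $A$-connected rel.\ the endpoints of that edge'') and prove it by the same induction, with Theorem~\ref{thm: JLPT G-v is A-con} as the engine at the $3$-edge-connected leaves.
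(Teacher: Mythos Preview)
Your approach is genuinely different from the paper's, but as written it has real gaps that are not just details to be filled in.

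The paper's proof is short and self-contained: take an edge-minimum counterexample $G$; the hypothesis on minimal $2$-edge-boundaries forces at most five vertices of degree~$2$ (the rest have degree $\ge 3$), so Euler's formula gives a facial cycle $C$ of length at most~$5$. Since $G$ is balanced, $C$ is positive, and a positive cycle of length $<|A|$ is $A$-connected directly from Proposition~\ref{equivAcon}. Then contract $C$ via Theorem~\ref{thm:flow-contract configs}; $G/C$ is planar, $2$-edge-connected, still has at most five minimal $2$-edge-boundaries, and has fewer edges --- contradiction. No appeal to Theorem~\ref{thm: JLPT G-v is A-con} is needed.

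Your proposed $2$-edge-cut reduction does not work as stated. First, splitting along a $2$-edge-cut and adding a virtual edge does \emph{not} force the count of minimal $2$-edge-boundaries to strictly decrease on each side: all five could lie on one side, and the virtual edge together with an old cut-edge can create a new $2$-edge-cut. Second, the side $G[\overline X]$ need not be $2$-edge-connected once the cut edges are removed, so the inductive hypothesis does not apply to it. Third, the ``paste'' step asks for more than $A$-connectivity of the pieces: you need to satisfy a boundary on one side with a \emph{prescribed} nonzero value on the virtual edge, which is a stronger property you have not established. Finally, in the $3$-edge-connected leaf case, Theorem~\ref{thm: JLPT G-v is A-con} gives only that $G-v$ is $A$-connected for a degree-$3$ vertex $v$; a planar $3$-edge-connected graph need not have any such vertex (e.g.\ the octahedron), and Lemma~\ref{lem: reduce to cubic} is a statement about a specific minimum counterexample to Theorem~\ref{main}, not a general reduction you can invoke here. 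You could instead cite the full Jaeger--Linial--Payan--Tarsi theorem that every $3$-edge-connected graph is $A$-connected for $|A|\ge 6$, but even then the reduction to $3$-edge-connected pieces is where the argument actually breaks.
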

\begin{proof}
Suppose for contradiction that the lemma is false. Let $G$ be an edge-minimum counterexample to the lemma.
Clearly $G$ has at least one edge, otherwise it is just an isolated vertex and is trivially $A$-connected.

Choose a facial cycle $C \subseteq G$ with minimum number of edges.
By the last assumption of the lemma, $G$ has at most $5$ vertices of degree~$2$, the rest have degree at least $3$. By these degree conditions and Euler's formula,
it follows that $C$ has at most $5$ edges. And since $G$ is balanced, $C$ is positive. So, given any $\overline{f}: E(C) \rightarrow A$, there exists an $A$-flow $f$ such that $f(e) \neq \overline{f}(e)$ for every $e \in E(C)$ because $|A| > |E(C)|$. Hence $C$ is $A$-connected by Proposition~\ref{equivAcon}. 
Theorem~\ref{thm:flow-contract configs} then tells us that since $G$ is not $A$-connected, neither is $G/C$. 
But since $C$ is a facial cycle, $G/C$ is a planar graph. Note that any edge-cut in $G/C$ is an edge-cut in $G$, and so $G/C$ is $2$-edge-connected. Also, any minimal $2$-edge-boundary in $G/C$ either is a minimal $2$-edge-boundary in $G$, or contains the vertex into which $C$ was contracted and that contraction eliminated a minimal $2$-edge-boundary from the graph. Because minimal $2$-edge-boundaries must be disjoint, $G/C$ has at most as many minimal $2$-edge-boundaries as $G$. Hence $G/C$ satisfies all the assumptions of the lemma, and contradicts the minimality of $G$.
\end{proof}

With Lemmas~\ref{lem: reduce to cubic} and \ref{lem:balanced at most 5 vert of degree 2 is A con} in place, and employing Theorems~\ref{thm:flow-contract configs} and \ref{thm: JLPT G-v is A-con}, we prove the main result of this section.

\begin{proof}[Proof of Lemma \ref{lem:reduction}]
Let $G$ be a signed graph which satisfies the assumptions of the lemma.
By Lemma \ref{lem: reduce to cubic}, $G$ is simple, cubic, and $3$-connected. We are left to show that the last sentence of the lemma holds, and we proceed by contradiction.  To this end, suppose there exists $X \subseteq V(G)$ with $G[X]$ balanced, which satisfies either (i) or (ii), and $G[X]$ is not a path. Suppose that $X$ is vertex-minimal subject to these conditions.

If $X$ satisfies (i), then $G[X]$ can be obtained from a 3-edge-connected balanced signed graph by deleting one vertex of degree $3$. (For example, the underlying graph formed by identifying $V(G)\setminus X$ to a single vertex $v$ is 3-edge-connected. Make this graph a signed graph where every edge is positive, and delete $v$.)  This means $G[X]$ is $A$-connected by Theorem~\ref{thm: JLPT G-v is A-con}. Theorem \ref{thm:flow-contract configs} then tells us that since since $G$ is not $A$-connected, neither is $G/X$. But $G/X$ is 3-edge-connected, 2-unbalanced, and cubic so $\sum_{v \in V(G/X)}|\deg(v)-3| = 0$ (as it was for $G$). So since $v(G/X) < v(G)$, $G/X$ contradicts the choice of $G$.

It must be that $X$ satisfies (ii) but not (i), thus $|\delta(X)| \in \{4,5\}$. Then $G[X]$ is a balanced, planar signed graph which is not a path. We will show that it is additionally 2-edge-connected. We can see that $G[X]$ is connected by the 3-connectivity of $G$, and since $|\delta(X)|\leq 5$. Suppose however that there is an edge-cut $\delta(Y)$ in $G[X]$ with $|\delta_{G[X]}(Y)| \leq 1$. One side of this cut, say $G[Y]$, is not a (possibly trivial) path. But then since $G[Y]$ is planar with $|\delta_G(Y)| \leq 5$, this contradicts the minimality of $X$. Hence $G[X]$ is $2$-edge-connected.

Since $G$ is cubic, and $|\delta(X)| \in \{4,5\}$, it follows that $G[X]$ has exactly four or exactly five vertices of degree~$2$ (one for each edge in $\delta(X)$), the rest have degree~$3$. By $3$-edge-connectivity of $G$, every edge-cut of size at most two in $G[X]$ contains a vertex of degree-$2$. This means number of minimal 2-edge-cuts in $G$ is at most 5.

We may now apply Lemma \ref{lem:balanced at most 5 vert of degree 2 is A con}, to get that $G[X]$ is $A$-connected. Because $G$ is not $A$-connected, this means, by Theorem \ref{thm:flow-contract configs}, that $G/X$ is also not $A$-connected. Now, $G/X$ is 2-unbalanced and 3-edge-connected, but no longer cubic. Indeed, the vertex $v$ that $X$ was contracted into has $\deg_{G/X}(v) \in \{4,5\}$, but all other vertices have degree~3. By Proposition \ref{prop:can uncontract and keep 2-unbal 3ec}, we can form a cubic, 2-unbalanced, 3-edge-connected (and hence 3-connected) signed graph $G'$ from $G/X$ by uncontracting at $v$ (possibly twice). Observation \ref{obs:G' Aconn} tells us that $G'$ is not $A$-connected. But $G[X]$ has at least four vertices, and at most two uncontractions happened, so $v(G') < v(G)$. Thus $G'$ contradicts the minimality of~$G$.
\end{proof}
    
\section{Projective Planar Duality}\label{sec: ppduality}

In the plane, Tutte's Theorem on Flow--Colouring Duality \cite{tutte_1954} says that a planar graph $G$ has a nz $k$-flow iff its planar dual $G^*$ has a $k$-colouring (for any integer $k\geq 2$); given Tutte's earlier-cited work we can also replace ``nz $k$-flow'' in this statement with ``nz $A$-flow'' for any abelian group $A$ with $|A|= k$. The proof of this duality theorem involves a bijection between orientations of an embedded graph $G$ and its planar dual $G^*$ achieved via the so-called ``right-hand rule''. This relationship extends to other surfaces via signed graphs, as we will detail now.

Consider a 2-cell embedding of an oriented graph $G$ in some surface $\Sigma$. Since every face is homeomorphic to a disc, every face may be equipped with an orientation in one of two ways (i.e. clockwise or counter-clockwise). Following Bouchet \cite{Bouchet}, and Mohar and Thomassen \cite{MoharThomassenText}, the \emph{dual} of this oriented $G$ on $\Sigma$ is an oriented signed graph $G^*$ embedded on $\Sigma$ with a vertex corresponding to each face of $G$, and with vertices in $G^*$ joined by an edge $e^*$ iff the two faces share an edge $e$. To determine the orientation (and signature) of the edge $e^*$ of $G^*$, we look at whether the orientation of the corresponding primal edge $e$ agrees or disagrees with its two incident faces. If $e$ agrees with exactly one of its incident faces $f$, then make $e^*$ a positive edge directed towards $f$. Otherwise, make $e^*$ a negative edge directed either towards both its endpoints (if $e$ agrees with both incident faces), or away from both its endpoints (if $e$ agrees with neither incident face). We say that such $G, G^*$ are \emph{oriented duals on $\Sigma$}.

Note that in the above definition, different notions of clockwise for the faces of $G$ lead to different signatures for $G^*$. In the plane it is convention to take a consistent clockwise orientation on all faces, which generates a unique (all-positive) dual, but in general we should say that a signed graph $G^*$ is \emph{a} dual of the embedded graph $G$, rather that \emph{the} dual. However, changing the orientation of a given face $f$ of $G$ amounts precisely to switching the signature of $G^*$ at the vertex $v_f$ corresponding to the face $f$, so all duals of $G$ have equivalent signatures.

Another point to make about the above definition is that, while we can take the dual of every graph $G$ embedded in a surface, it is not true that every signed graph embedded on a surface $S$ can be obtained as a dual of a graph embedded in $S$. Indeed Zaslavsky \cite{Zas} has proved a forbidden-minor characterization for  \emph{projective planar signed graphs}, that is, signed graphs which, up to switching, can be obtained as the dual of a graph embedded in the projective plane. 

The above duality concept is one of the main motivations cited by Bouchet \cite{Bouchet} for studying flows in signed graphs. The following lemma, which in particular generalizes the afore-mentioned Flow--Colouring Duality Theorem of Tutte, may be considered implicit from the work of Tutte \cite{tutte_1954} and Bouchet \cite{Bouchet}. However since explicit statements and proofs are difficult to find in the literature, we include it here.

\begin{lemma}\label{duality} Let $G, G^*$ be oriented duals on a surface $\Sigma$, with $e^* \in E(G^*)$ denoting the dual edge to each $e \in E(G)$,  and let $A$ be an abelian group with $|A| \geq 2$. Then:
\begin{enumerate}
\item If $c: V(G) \rightarrow A$, then  $f: E(G^*)\rightarrow A$ given by $f(e^*)=c(v)-c(u)$ where $e$ is directed from $u$ to $v$ in $G$, is an $A$-flow.
\item If $f: E(G^*)\rightarrow A$ is an $A$-flow, and if either $\Sigma$ is the plane or $\Sigma$ is the projective plane and $A$ has no element of order 2, then there exists $c: V(G) \rightarrow A$ such that $c(v)-c(u) = f(e^*)$ for every directed edge $e=uv \in E(G)$.
\end{enumerate}
\end{lemma}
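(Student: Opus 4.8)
The plan is to convert statements about $\partial f$ on $G^*$ into telescoping sums around the boundary walks of the faces of $G$. The single computational ingredient, used for both parts, is a bookkeeping fact read off from the four-case definition of the oriented dual: if $F$ is a face of $G$, with corresponding vertex $v_F \in V(G^*)$, and $e$ lies on the boundary walk of $F$, then the half-edge of $e^*$ at $v_F$ points out of $v_F$ exactly when that walk traverses $e$ against the orientation of $e$ (with one such statement per traversal if $e$ bounds $F$ on both sides, so that $e^*$ is a loop at $v_F$). Equivalently, writing $\epsilon(F,e)=+1$ when the walk agrees with the orientation of $e$ and $-1$ otherwise, that half-edge contributes exactly $-\epsilon(F,e)\,f(e^*)$ to $\partial f(v_F)$, for every $f\colon E(G^*)\to A$.

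\textbf{Part 1.} Fix a face $F$; being $2$-cell, its boundary is a closed walk $x_0\,e_1\,x_1\cdots e_m\,x_m=x_0$, and the edges of $G^*$ at $v_F$ are exactly the $e_i^*$. If $e_i$ is directed from $u$ to $v$ in $G$, then $c(x_i)-c(x_{i-1})=\epsilon(F,e_i)\,(c(v)-c(u))=\epsilon(F,e_i)\,f(e_i^*)$. Summing the bookkeeping fact over $i$,
\[\partial f(v_F)=-\sum_{i=1}^m \epsilon(F,e_i)\,f(e_i^*)=-\sum_{i=1}^m\bigl(c(x_i)-c(x_{i-1})\bigr)=0,\]
since the last sum telescopes around the closed walk; as $F$ was arbitrary, $f$ is an $A$-flow.

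\textbf{Part 2.} Let $f$ be an $A$-flow on $G^*$. A $2$-cell embedding forces $G$ connected; fix a spanning tree $T$ rooted at $r$, and for a directed edge $\vec e$ of $G$ put $\phi(\vec e)=f(e^*)$ if $\vec e$ follows the orientation of $e$ and $\phi(\vec e)=-f(e^*)$ otherwise, and let $c(v)$ be the sum of $\phi$ along the $r$--$v$ path in $T$. If $\phi$ is \emph{conservative} --- every closed walk of $G$ has $\phi$-sum $0$ --- then $c$ is well defined, and for any directed edge $e=uv$ the closed walk that follows $T$ from $r$ to $u$, then $e$, then $T$ back to $r$ has $\phi$-sum $c(u)+f(e^*)-c(v)$, whence $c(v)-c(u)=f(e^*)$, as desired. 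Now the $\phi$-sum of a closed walk equals $\langle z,f\rangle:=\sum_{e\in E(G)}z_e\,f(e^*)$, where $z$ is the walk's signed edge-count vector; as the walk varies, $z$ runs over the integral cycle space $Z(G):=\ker\bigl(\partial_1\colon\mathbb{Z}^{E(G)}\to\mathbb{Z}^{V(G)}\bigr)$. So conservativity is equivalent to $\langle\cdot,f\rangle\equiv 0$ on $Z(G)$, and by the bookkeeping fact applied to our $f$, the boundary-walk vector $z^F$ of each face $F$ already satisfies $\langle z^F,f\rangle=\sum_i\epsilon(F,e_i)f(e_i^*)=-\partial f(v_F)=0$.

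It remains to pass from vanishing on the $z^F$ to vanishing on all of $Z(G)$, and this is exactly where the surface hypothesis enters: by cellular homology of the embedding (see e.g. Mohar and Thomassen \cite{MoharThomassenText}), $Z(G)$ modulo the subgroup generated by the $z^F$ is $H_1(\Sigma;\mathbb{Z})$. When $\Sigma$ is the plane (equivalently the sphere) this is $0$, so the $z^F$ generate $Z(G)$ and we are done. When $\Sigma$ is the projective plane it is $\mathbb{Z}/2$, so $2z\in\langle z^F:F\rangle$ for every $z\in Z(G)$; then $2\langle z,f\rangle=\langle 2z,f\rangle=0$, and the hypothesis that $A$ has no element of order $2$ forces $\langle z,f\rangle=0$. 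Either way $\phi$ is conservative and the construction of $c$ goes through. The crux of the whole argument is establishing the bookkeeping fact cleanly from the definition of the oriented dual --- in particular matching the two half-edges of a loop $e^*$ with the two traversals of $e$ in a face's boundary walk; everything afterwards is telescoping, the standard identification of closed-walk sums with the cycle space, and the quoted homological fact, with the no-$2$-torsion hypothesis serving precisely to divide by $2$ in the projective-planar case (and, correspondingly, one expects the statement to fail there for $A=\mathbb{Z}_2$).
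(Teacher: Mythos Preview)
Your proof is correct and rests on the same core identity as the paper's: at each face $F$, the boundary $\partial f(v_F)$ equals (up to a global sign) the telescoping sum $\sum_i \epsilon(F,e_i)\,f(e_i^*)$ around the facial walk. Part~1 is then identical in content to the paper's argument.

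For Part~2 the two presentations diverge in style. The paper gives a self-contained combinatorial reduction: any contractible closed walk in $G$ can be transformed into a trivial walk by repeatedly removing immediate reversals and replacing one arc of a facial cycle by the complementary arc, and both moves preserve the $\phi$-sum (the latter precisely because $\partial f(v_F)=0$); a non-contractible walk $W$ in the projective plane is handled by noting that $2W$ is contractible, so $2\,\phi(W)=0$, and the no-$2$-torsion hypothesis finishes. Your argument packages exactly the same content into the homological statement $Z(G)/\langle z^F\rangle\cong H_1(\Sigma;\mathbb{Z})$, which is $0$ for the sphere and $\mathbb{Z}/2$ for the projective plane. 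Your version is shorter and makes the role of the surface topology explicit, at the price of citing an outside fact; the paper's version is longer but entirely elementary. Neither gains in generality, since both break down on higher-genus surfaces for the same reason (nontrivial free part in $H_1$).
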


\begin{proof} (1) We must show that $f$ satisfies the conservation of flow property at every vertex $x\in V(G)$. The edges incident to $x$ in $G^*$ correspond to a cycle $C$ in $G$. There is a notion of clockwise for $C$ given by the orientation of the face in $G$ corresponding to $x$, which allows us to think of each oriented edge in $C$ as either forwards or backwards. According to the way we have defined oriented duals, a forwards edge $e$ in $C$ corresponds to an edge $e^*$ directed towards $x$, and a backwards edge $e$ in $C$ corresponds to an edge $e^*$ directed outwards from $x$. (Note that such an edge $e^*$ may be negative, but we are only concerned with the orientation on its half incident to $x$.)
Let $T(C)$ be the total obtained by adding $f$ on the forwards edges of $C$, and subtracting $f$ on the backwards edges of $C$. Then $T(C)$ is also equal to the sum obtained by adding $f$ on all the edges pointing into $v$, and subtracting $f$ on all the edges pointing out of $v$.
But $T(C)$ amounts to adding $c(u)-c(v)$ for every forward edge $e=uv$ in $C$, and subtracting this value for every backwards edge. This means each vertex $v$ on $C$ contributes both $c(v)$ and $-c(v)$, so $T(C) = 0$ and $f$ is indeed a flow.

(2) Given the flow $f$ on $G^*$, we define $f^*: E(G)\rightarrow A$ by $f^*(e)=f(e^*)$. Let $W$ be any closed walk in $G$. Choose a direction for $W$, and let $f^*(W)$ be the sum computed by adding $f^*(e)$ for every time $e$ appears as a forwards edge in $W$, and subtracting $f^*(e)$ for every time $e$ appears as a backwards edge in $W$. We must show that $f^*(W)=0$; this will mean that by starting with any vertex $w\in G$ and setting $c(w)=0$, we can proceed to assign a $c$-value to each vertex in $G$ (repeating this process if $G$ has multiple components) so that $c(v)-c(u) =f^*(e)=f(e^*)$ for every directed edge $e=uv \in E(G)$.

If $W$ is non-contractible in the projective plane, then the walk we get by following twice along $W$, denoted by $2W$, is a contractible curve.  If we know that $f^*(2W)=0$ then this implies that $f^*(W)=0$, since in this case $A$ has no element of order 2 by assumption. So it suffices to assume that $W$ is contractible.

Since $W$ is a closed walk that is contractible in the plane or projective plane, it can be transformed into a trivial walk $W'$ (with $f^*(W')=0$) by two operations: removing immediate reversals (deleting both the forwards and backwards occurrence of the immediately reversed edge), and; replacing a segment of edges in $W$ who form a segment of a facial cycle $C$ in $G$ with the rest of that facial cycle, taken backwards. In the former case, the removal has the effect of subtracting $f^*(e)-f^*(e)=0$ from $f^*(W)$ for some edge $e$. In the latter case, to show that the replacement has no effect on the sum $f^*(W)$, we must show that $f^*(C)=0$. To this end, note that the set $E(C)$ corresponds exactly to the set of edges incident to some vertex $v\in V(G^*)$. According to the way we have defined oriented duals, the sum $f^*(C)$ is exactly equal to the net flow of $f$ at $v$, which is zero since $f$ is a flow. 
\end{proof}

Lemma \ref{duality} gives Tutte's flow-colouring duality theorem for $\Sigma$ the plane when the flows $f$ in question are nowhere-zero and the functions $c$ in question are proper vertex-colourings. Lemma \ref{duality} also shows that this duality of nz flows and proper vertex-colourings extends generally to the projective plane; when considering other surfaces we only get the direction of (1). Rather than considering nz flows in Lemma \ref{duality}, we can as well assume that the flows $f$ in question avoid a set of particular values $\overline{f}: E(G^*) \rightarrow A$ on the oriented $G^*$ (i.e. for group-connectivity as in Proposition \ref{equivAcon}). This in turn corresponds to a function $c: V(G)\rightarrow A$ where $c(v)-c(u) \neq \overline{f}(e^*)$ for every directed edge $e=uv \in E(G)$. This $c$ is a group-valued version of DP-colouring (see eg. \cite{BKP}, \cite{DP}).

Consider now the example given in Figure \ref{fig: PetersenK6alt}. Here we see the graph $K_6$ drawn, with circle vertices and thin lines, on the projective plane (with the dotted circle indicating the cross-cap). Note that a dual, drawn with sold vertices and bolded lines, will be a signed version of the Petersen graph, with edges though the cross-cap consisting of the edge-set of the 5-cycle $(v_1, v_2, v_3, v_4, v_5)$. We will give an orientation to each face of $K_6$ so that we may fix a specific signature and define the signed graph $\mathcal{P}_s$ as a dual of this embedding of $K_6$. To this end, give a clockwise orientation to each of the vertices in $\mathcal{P}_s$. The resulting facial orientation of $K_6$ makes an edge of $\mathcal{P}_s$ negative iff it goes through the cross-cap. (Note that this special relationship occurred here because faces involving the cross-cap are each bordered by non-cross-cap edges only on the side of the cross-cap where we have drawn the vertices $v_i$.) Of course, by our comments above, a different orientation assigned to the faces of $K_6$ can result in a different signature, but all such signatures will be equivalent to that of $\mathcal{P}_s$.

\begin{figure}[htb]
    \centering
    \includegraphics[height=6.5cm]{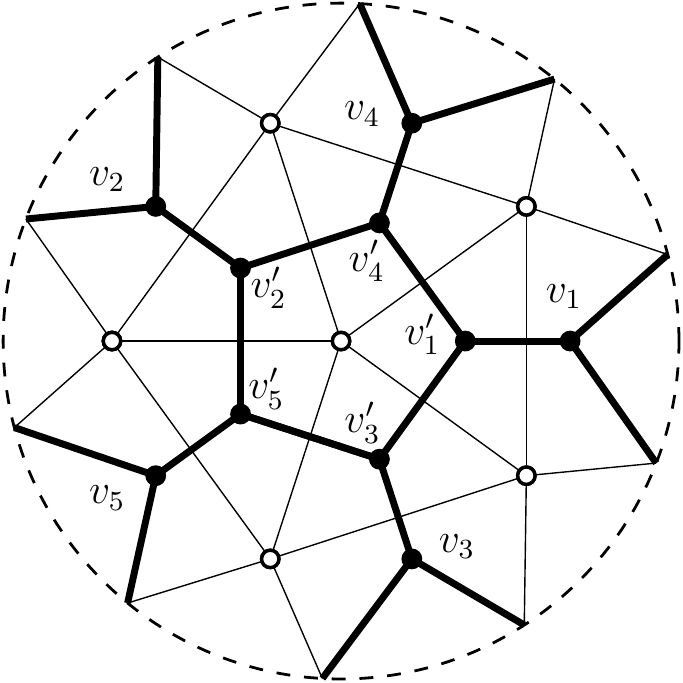}
    \caption{A drawing of $K_6$ on the projective plane (with circle vertices and normal lines), and its projective planar dual $\mathcal{P}_s$ (with solid vertices and bolded lines) where negative edges are precisely the edge-set of the cycle $(v_1, v_2, \ldots, v_5)$.}
    \label{fig: PetersenK6alt}
\end{figure}

Note that $\mathcal{P}_s$ is 3-edge-connected and 2-unbalanced, and in particular is flow-admissible. However, Bouchet \cite{Bouchet} proved that it has no nz $k$-flow for any $k\leq 5$, and he showed that this is because  $K_6$ is not $k$-colourable for any such $k$. Hence $\mathcal{P}_s$ shows sharpness for Bouchet's Conjecture. Using Lemma \ref{duality}(2) we now see that this idea extends to show that $\mathcal{P}_s$ is sharp for both Conjecture \ref{conj: Bouchet group analog} and our Theorem \ref{main}.

\begin{lemma} Let $A$ be an abelian group with $|A|\leq 5$. Then the signed graph $\mathcal{P}_s$ is not $A$-connected, and in particular has no nz $A$-flow. Moreover, $\mathcal{P}_s$ has no nz $k$-flow for any $k\leq 5$.
\end{lemma}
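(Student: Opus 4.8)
The plan is to reduce everything to a single statement: $\mathcal{P}_s$ has no nz $A$-flow for every abelian group $A$ with $|A|\le 5$. This already yields that $\mathcal{P}_s$ is not $A$-connected, since an $A$-connected signed graph must satisfy the zero $A$-boundary and hence has an nz $A$-flow. It also yields the last sentence: any nz $k$-flow with $k\le 5$ is in particular an nz $5$-flow, whose values (of absolute value at most $4$) remain nonzero modulo $5$, so it induces an nz $\mathbb{Z}_5$-flow; alternatively the last sentence is precisely Bouchet's theorem \cite{Bouchet}. So I would be left to rule out nz $A$-flows for $|A|\in\{2,3,4,5\}$ (the trivial group being immediate, as $\mathcal{P}_s$ has edges), and the four subcases call for different ideas.

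For $|A|$ odd, i.e.\ $A\cong\mathbb{Z}_3$ or $A\cong\mathbb{Z}_5$, I would use that $\mathcal{P}_s$ is, up to switching, a projective-planar dual of $K_6$, and that $A$ has no element of order $2$, so that Lemma~\ref{duality}(2) turns a hypothetical nz $A$-flow on $\mathcal{P}_s$ into a map $c:V(K_6)\to A$ with $c(v)\ne c(u)$ for every edge $uv$ of $K_6$ --- that is, a proper colouring of $K_6$ with at most $5$ colours, contradicting $\chi(K_6)=6$. For $A\cong\mathbb{Z}_2$ and $A\cong\mathbb{Z}_2\times\mathbb{Z}_2$ the orientation is irrelevant (every element is its own inverse), so $\partial f(v)=\sum_{e\ni v}f(e)$: for $\mathbb{Z}_2$ this equals $\deg(v)=1\ne 0$ since $\mathcal{P}_s$ is cubic, and for $\mathbb{Z}_2\times\mathbb{Z}_2$ an nz flow on a cubic graph is exactly a proper $3$-edge-colouring by the three nonzero group elements (the only triples of nonzero elements summing to $0$), which the Petersen graph does not admit.

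The remaining and most delicate case is $A\cong\mathbb{Z}_4$, where the orientation genuinely matters and neither duality nor a reduction to $K_6$ is available. Here I would proceed as follows. Given an nz $\mathbb{Z}_4$-flow $f$, split the edges into $M=f^{-1}(2)$ and $S=f^{-1}(\{1,3\})$; reducing $f$ modulo $2$ shows $S$ is an even subgraph, so every vertex has $S$-degree $0$ or $2$, and a vertex of $S$-degree $0$ would give $\partial f(v)=6\equiv 2\pmod 4$, a contradiction. Hence $S$ is a $2$-factor and $M$ a perfect matching, and since the Petersen graph is non-Hamiltonian of girth $5$, $S$ is a union of two disjoint $5$-cycles $C_1,C_2$. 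Summing the boundary equations around $C_i$ --- the five matching half-edges at $C_i$ contribute $10\equiv 2$, each positive edge of $C_i$ contributes a net $0$, and each negative edge a net $\pm 2\equiv 2$ --- forces an odd number of negative edges on each $C_i$, so both $C_1$ and $C_2$ are negative cycles.

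I expect the real obstacle to be the final step: showing $\mathcal{P}_s$ has no two disjoint negative cycles. For this, observe that two disjoint cycles in the Petersen graph are two $5$-cycles $D_1\sqcup D_2$ spanning all ten vertices, with a complementary perfect matching $M'$; if both $D_i$ are negative then each meets the negative-edge set $N$ (which, in the signature of the figure, is itself a $5$-cycle) in an odd number of edges, so $|M'\cap N|=|N|-|D_1\cap N|-|D_2\cap N|$ is odd, and hence equals $1$ since a matching meets a $5$-cycle in at most $2$ edges. If $xy$ is the unique edge of $M'$ lying on $N$, then each remaining vertex of $N$ is matched along its unique edge off $N$, so the off-$N$ neighbours $x',y'$ of $x$ and $y$ must be matched to each other; but $x'\ne y'$ (else $xyx'$ is a triangle) and $x'\not\sim y'$ (else $xx'y'y$ is a $4$-cycle), contradicting that the Petersen graph has girth $5$. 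This contradiction completes the $\mathbb{Z}_4$ case and hence the proof.
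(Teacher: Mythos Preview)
Your proof is correct. For the cases $|A|\in\{2,3,5\}$, $A\cong\mathbb{Z}_2\times\mathbb{Z}_2$, and the integer $k$-flow statement, your argument is essentially identical to the paper's (you reduce to $\mathbb{Z}_5$ first and then invoke duality, whereas the paper applies Lemma~\ref{duality}(2) over $\mathbb{Z}$ and reduces modulo~$5$ afterwards; these are interchangeable).

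The genuine difference is in the $\mathbb{Z}_4$ case. The paper observes that the $2$'s form a perfect matching, reduces by symmetry to two explicit matchings, checks that in each case one complementary $5$-cycle is positive, and then notes that on a positive $5$-cycle oriented consistently the values must alternate between $1$ and $3$, impossible for odd length. You instead prove the contrapositive: summing the boundary equations around each complementary $5$-cycle $C_i$ (using $2=-2$ in $\mathbb{Z}_4$) forces $C_i$ to contain an odd number of negative edges, so both $C_i$ would have to be negative; you then show structurally, via a clean girth-$5$ argument, that $\mathcal{P}_s$ admits no two vertex-disjoint negative cycles. Your route avoids the ``without loss of generality there are two matchings'' symmetry reduction and gives the stronger standalone fact that $\mathcal{P}_s$ has no two disjoint negative cycles---a fact the paper actually uses elsewhere (cf.\ Theorem~\ref{thm: no 2 disj negative cycles}), so your argument dovetails nicely with the rest of the paper. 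The paper's route is shorter once the symmetry is granted and makes the obstruction (parity of an alternating sequence on an odd cycle) very explicit.
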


\begin{proof} First, suppose for a contradiction that $\mathcal{P}_s$ has a nz $k$-flow $f$ for some $k\leq 5$. By reversing orientations as necessary we may assume that $f(e)\in\{1, 2, 3, 4\}$ for all $e\in E(\mathcal{P}_s)$. We apply Lemma \ref{duality}(2) with $A=\mathbb{Z}$ (since $\mathcal{P}_s$ is projective planar), yielding $c: V(K_6) \rightarrow \mathbb{Z}$ such that $c(v)-c(u) = f(e^*)$ for every directed edge $e=uv \in E(K_6)$. Let $c'(w) \equiv c(w)$ (mod 5) for all $w\in V(K_6)$. Since $f$ is nowhere zero, this function $c'$ is a 5-colouring of $K_6$, a contradiction. Therefore $\mathcal{P}_s$ has no nz $k$-flow for any $k\leq 5$.

It remains to show the first part of the lemma holds. The result for $|A|\in\{3, 5\}$ follows from Lemma \ref{duality}(2) since $K_6$ is not 5-colourable and such an $A$ cannot have any element of order 2. It is obvious that $\mathcal{P}_s$ has no nz $\mathbb{Z}_2$-flow, since it has vertices of odd degree. In fact, since $\mathcal{P}_s$ is cubic, any nz $\mathbb{Z}_2\times \mathbb{Z}_2$-flow would need to have exactly one of each of the nonzero elements incident to each vertex, which would be a 3-edge-colouring of $\mathcal{P}$, a contradiction.

Finally, suppose for a contradiction that $\mathcal{P}_s$ has an nz $\mathbb{Z}_4$-flow, say $\varphi$. Then every vertex must be incident to exactly one edge with $\varphi$-value 2. Without loss of generality, there are exactly two different perfect matchings of $\mathcal{P}_s$ on which these 2's could appear; using the vertex-labelling from Figure \ref{fig: PetersenK6alt}, these are: $\{(v_i, v'_i): 1\leq i\leq 5\}$, or $\{(v_1, v'_1), (v_2', v_4'), (v_5', v_3'), (v_2, v_3), (v_4, v_5)\}$. In either case, deleting such a perfect matching leaves us with two vertex-disjoint cycles of length 5, one of which is positive. We may assume, without loss, that such a cycle has all positive edges and is oriented as a directed cycle, in which case its edges must alternate between $\varphi$-value 1 and $\varphi$-value 3. This is a contradiction, since the cycle has length 5.
\end{proof}

In order to show that a signed graph is not $A$-connected, as we just did with $\mathcal{P}_s$, we used part (2) of Lemma \ref{duality}. Let us now use part (1) of Lemma \ref{duality} to prove the following important case of our Theorem \ref{main}. 

\begin{theorem}\label{thm: no 2 disj negative cycles}
Let $G$ be a $3$-edge-connected, 2-unbalanced signed graph, and let $A$ be an abelian group with $|A|\geq 6$. Suppose $G$ is not $A$-connected and suppose that $G$ is chosen to be minimum with respect to $\sum_{v \in V(G)}|\deg(v)-3|$, and subject to that with respect to $|V(G)|$. Then $G$ must contain two disjoint negative cycles.
\end{theorem}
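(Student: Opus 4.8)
The strategy is to argue by contradiction: let $G$ be a minimum counterexample in the stated sense, and suppose $G$ has \emph{no} two disjoint negative cycles. By Lemma~\ref{lem:reduction}, $G$ is simple, cubic, and $3$-connected, and moreover every balanced $G[X]$ with $|\delta(X)|=3$, or with $|\delta(X)|\le 5$ and $G[X]$ planar, is a (possibly trivial) path. The first goal is to extract strong structural information from the ``no two disjoint negative cycles'' hypothesis. A classical fact (essentially due to Lov\'asz, and used in this context by e.g. DeVos and others) is that a signed graph with no two vertex-disjoint negative cycles has a bounded ``blocking'' structure: either it has a vertex $v$ meeting all negative cycles, or it has a small edge-cut whose removal makes one side balanced, or --- and this is the projective-planar case --- the signed graph is, up to switching, embeddable in the projective plane. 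I would first reduce to one of these cases, handling the ``vertex hitting all negative cycles'' and ``small balanced side'' cases directly using the reduction lemma: in those cases one finds a balanced $G[X]$ satisfying (i) or (ii) of Lemma~\ref{lem:reduction} that is not a path, contradicting minimality. (For instance, if a single vertex $v$ meets all negative cycles, then $G-v$ is balanced and, being an induced subgraph of a cubic $3$-connected graph obtained by deleting one vertex, has $|\delta(V(G)\setminus v)|=3$; it is not a path since $|V(G)|\ge 4$, so Lemma~\ref{lem:reduction}(i) applies.)

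The substantive case is therefore when $G$ is projective planar: up to switching, $G$ is a dual of some graph $H$ embedded in the projective plane, as discussed around Lemma~\ref{duality}. Here I would invoke Lemma~\ref{duality}(1): a proper $A$-colouring $c$ of $H$ (or, for group-connectivity, a DP-style colouring avoiding prescribed differences) yields an $A$-flow on $G$ with the prescribed boundary behaviour. Concretely, given any $A$-boundary $\beta$ of $G$ and a prescribed $\overline f$, Proposition~\ref{equivAcon} reduces $A$-connectivity to finding an $A$-flow avoiding $\overline f$ edgewise; by Lemma~\ref{duality}(1) it suffices to find $c:V(H)\to A$ with $c(v)-c(u)\ne \overline f(e^*)$ for every oriented edge $e=uv$ of $H$. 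Such a $c$ exists provided $H$ satisfies a degeneracy/list-type bound compatible with $|A|\ge 6$. The key point is that $H$ is embedded in the projective plane, and the cubic $3$-connectivity of $G$ translates (via duality) into a bound on the face sizes / a sparsity condition on $H$ --- in particular $H$ is a simple projective-planar graph, hence $5$-degenerate, and for $|A|\ge 6$ one can greedily choose $c$ avoiding the forbidden differences. This contradicts $G$ not being $A$-connected.

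The main obstacle I anticipate is twofold. First, pinning down the exact ``structure theorem'' for signed graphs with no two disjoint negative cycles in a form clean enough to feed into Lemma~\ref{lem:reduction} --- the literature versions (Lov\'asz-type, or the Slilaty/Zaslavsky projective-planarity results) need to be stated so that the non-projective-planar cases genuinely produce a forbidden balanced configuration of the right edge-cut size, and some care is needed to ensure $3$-connectivity of $G$ forces the balanced side to have a $3$- or small-planar boundary rather than merely being ``small''. Second, in the projective-planar case, one must be careful that duality is being applied with the right group: Lemma~\ref{duality}(2) needs no element of order $2$, but part (1) --- which is all we use here --- holds for every $A$, so the argument should go through for all $|A|\ge 6$, including $|A|=7$; I would double-check that the greedy colouring of the projective-planar primal $H$ really only needs $|A|\ge 6$ and not more, exploiting that simple projective-planar graphs have a vertex of degree at most $5$ and this is inherited by subgraphs. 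If the degeneracy bound is exactly $5$, then $|A|\ge 6$ is precisely what makes the greedy step work, matching the statement.
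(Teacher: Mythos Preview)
Your plan is essentially the paper's proof: invoke a structure theorem for signed graphs with no two disjoint negative cycles, dispose of the non--projective-planar outcomes via Lemma~\ref{lem:reduction}, and in the projective-planar case use Lemma~\ref{duality}(1) together with the $5$-degeneracy of the primal graph $H$ to greedily find the required $c:V(H)\to A$ whenever $|A|\ge 6$.

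Two points of correction. First, the structure theorem the paper actually uses is Slilaty's (Theorem~\ref{thm: Slilaty}), stated for $3$-edge-connected cubic signed graphs: the alternatives to projective-planarity are that some equivalent signature has at most one negative edge (killed by $2$-unbalanced), or that the negative edges form a \emph{triangle}. Your ``balancing vertex / small balanced cut'' trichotomy is in the right spirit but is not the form you will find cleanly in the literature for this setting; citing Slilaty directly resolves your first anticipated obstacle.

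Second, your handling of the balancing-vertex case contains an actual error: you assert that $G-v$ ``is not a path since $|V(G)|\ge 4$'', but when $|V(G)|=4$ the graph $G-v$ \emph{is} a path and $G=K_4$. More generally, the paper's triangle case collapses via Lemma~\ref{lem:reduction}(i) precisely to $G=K_4$ with one face signed negative, and this residual case must be dispatched by hand: the paper does so by observing that the positive $4$-cycle $C\subseteq K_4$ is $A$-connected (since $|A|\ge 5$), that $K_4/C$ is a vertex with two negative loops and hence $A$-connected, and then applying Theorem~\ref{thm:flow-contract configs}. You should expect this small leftover rather than a clean contradiction from Lemma~\ref{lem:reduction} alone.
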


We will see how Theorem \ref{thm: no 2 disj negative cycles} fits within the rest of our Theorem \ref{main} later, in Section \ref{sect:proof of main}. For now, let us simply provide a proof, aided by the following result of Slilaty \cite{Slilaty}. In fact, Slilaty has completely characterized signed graphs without two disjoint negative cycles; we state below a version for 3-edge-connected cubic graphs only, which will be sufficient for us.

\begin{theorem}[Slilaty \cite{Slilaty}]\label{thm: Slilaty} Let $G$ be a 3-edge-connected, cubic signed graph, and suppose that $G$ has no two disjoint negative cycles. Then either:
\begin{enumerate}
\item there is an an equivalent signature with at most one negative edge; or
\item there is an equivalent signature where the set of negative edges in $G$ forms a triangle; or
\item $G$ is projective planar.
\end{enumerate}
\end{theorem}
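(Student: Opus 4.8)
The plan is to obtain Theorem~\ref{thm: Slilaty} as the $3$-edge-connected cubic specialization of Slilaty's general structure theorem for signed graphs with no two vertex-disjoint negative cycles, checking that under these extra hypotheses the general list of outcomes collapses to the three stated. The organizing object is a minimum-size vertex set $X$ with $G-X$ balanced. Since $G$ has no two vertex-disjoint negative cycles, Slilaty's theorem tells us that either such a set of size at most three exists, or $G$ is projective planar --- which is outcome (3) --- or $G$ is one of finitely many exceptional graphs, each of which in the present class again admits a vertex set of this kind of size at most three. So the work splits into a finite case analysis when $|X|\le 3$, and the projective-planar conclusion otherwise.

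First I would dispose of the cases with a small blocking set. If $X=\emptyset$ then $G$ is balanced, outcome (1). If $X=\{v\}$, re-sign so that every edge of $G-v$ is positive; then every negative edge of $G$ is incident to $v$, and since $G$ is cubic there are at most three of them, so switching at $v$ if necessary leaves at most one negative edge, outcome (1). If $|X|\in\{2,3\}$, re-sign so that $G-X$ is all positive; now every negative edge meets $X$, so there are boundedly many, and I would enumerate the possibilities for how they sit relative to $X$ and to one another, switching at vertices of $X$ to minimize the number of negative edges. Cubicity (each vertex of $X$ carries at most three edges) keeps this enumeration finite, and $3$-edge-connectivity rules out degenerate sub-configurations --- for instance a proper subset of $X$ already balancing $G$, or a low-order edge cut inside $G-X$ --- so that the outcome is always that $G$ is equivalent to a signature with at most one negative edge (outcome (1)) or to one whose three negative edges form a triangle (outcome (2)). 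I expect the $|X|=3$ case to carry the most sub-cases but to be entirely routine.

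The genuine obstacle is Slilaty's dichotomy itself: that a $3$-connected cubic signed graph with no two vertex-disjoint negative cycles and no vertex set of size at most three whose deletion balances it must embed in the projective plane, with its negative cycles being exactly the non-contractible ones. I would either cite Slilaty~\cite{Slilaty} for this --- it is the heart of that paper --- or, in the cubic case, attempt the embedding directly: analyze how the negative cycles pairwise intersect (any two must meet, since $G$ has no two vertex-disjoint negative cycles), use this crossing structure together with $3$-connectedness (which forces an essentially unique rotation system) to locate a single cross-cap through which exactly the negative edges pass, and finally verify via Zaslavsky's forbidden-minor characterization of projective-planar signed graphs~\cite{Zas} that no excluded minor survives. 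This embedding-construction step is substantially harder than everything else, and is the part I would most expect to take from the literature rather than reprove.
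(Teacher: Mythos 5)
The paper does not prove this statement at all: it is quoted directly from Slilaty's characterization of signed graphs with no two disjoint negative cycles, specialized to $3$-edge-connected cubic graphs, so the only content beyond the citation is that specialization. Your proposal likewise (and reasonably) defers the hard dichotomy to Slilaty, but the scaffolding you build around it contains a genuine error. The trichotomy is not organized around balancing sets of size at most three: Slilaty's theorem concerns balancing \emph{vertices} (outcome (1) is exactly the balanced/balancing-vertex case once the graph is cubic), and all remaining (``tangled'') graphs are handled by his structure theorem, which is where both the projective-planar outcome and the triangle exception come from. Your claim that whenever a balancing set $X$ with $|X|\in\{2,3\}$ exists, a routine finite enumeration always lands in outcome (1) or (2), is false, and the paper's own example $\mathcal{P}_s$ from Section~\ref{sec: ppduality} refutes it. In $\mathcal{P}_s$ the negative edges are precisely the outer cycle $(v_1,\dots,v_5)$, so $X=\{v_1,v_2,v_4\}$ is a vertex cover of that cycle and $\mathcal{P}_s-X$ is all-positive, hence balanced: a balancing set of size~$3$. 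Yet $\mathcal{P}_s$ is cubic, $3$-edge-connected, and has no two disjoint negative cycles (any two disjoint cycles in the Petersen graph form a $2$-factor of two $5$-cycles, and in every such pair exactly one cycle meets the outer cycle an odd number of times). It satisfies neither (1) nor (2): an equivalent signature with at most one negative edge would give a balancing vertex, but deleting any single vertex of $\mathcal{P}_s$ leaves a negative cycle (e.g.\ deleting $v_1$ leaves $v_4v_5v_5'v_3'v_1'v_4'$, which uses exactly one negative edge, and deleting any $v_i'$ leaves the outer cycle); and the Petersen graph is triangle-free, so the negative edges can never form a triangle. Only outcome (3) holds for $\mathcal{P}_s$.

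Consequently your case $|X|\in\{2,3\}$ cannot close by switching and enumeration: in some configurations you would be forced to establish projective planarity, which is exactly the content you planned to take from Slilaty but which your reading of his theorem (``either a blocking set of size at most three exists, or $G$ is projective planar, or finitely many exceptions'') does not actually provide. The correct route is the one the paper implicitly takes: dispose of the balanced/balancing-vertex case as you do for $|X|\le 1$ (that argument of yours is fine and gives (1)), and for everything else invoke Slilaty's full characterization of tangled signed graphs, whose cubic $3$-edge-connected specialization yields (2) or (3); the do-it-yourself embedding sketch via ``essentially unique rotation systems'' is not a viable substitute.
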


\begin{proof}[Proof of Theorem \ref{thm: no 2 disj negative cycles}]
Suppose for a contradiction that $G$ does not contain two disjoint negative cycles, and apply Theorem \ref{thm: Slilaty}. Since $G$ is 2-unbalanced, situation (1) from Theorem \ref{thm: Slilaty} is impossible. By Lemma \ref{lem:reduction}, $G$ is cubic, so (2) would imply a $3$-edge-cut meeting the conditions of Lemma \ref{lem:reduction} (i), in particular that that the graph induced by the vertices outside of the triangle is a path. But then since $G$ is cubic, the path must be trivial and $G$ must be $K_4$ (with one triangle signed all negative). Then $G$ has a positive cycle $C \subseteq G$ of length $4$. It is straightforward to verify that $C$ is $A$-connected by Proposition \ref{equivAcon} (as we did in the proof of Lemma \ref{lem:balanced at most 5 vert of degree 2 is A con}) since $|A|\geq 5$. Then $G/C$ is a single vertex with two negative loops; by oppositely orienting the loops and assigning any common value to both, we see that $G/C$ is $A$-connected by Proposition \ref{equivAcon} since $|A|\geq 3$. But now $G$ is $A$-connected by Theorem \ref{thm:flow-contract configs}.

We may now assume that $G$ satisfies (3) from Theorem \ref{thm: Slilaty} . Let $H$ be an oriented graph embedded in the projective plane with oriented dual $H^*=G$. Let $\overline{f}: E(G) \rightarrow A$; we must show there exists a flow $f$ on $G$ with $f(e)\neq \overline{f}(e)$ for all $e\in E(G)$. According to Lemma \ref{duality}(1) and Proposition \ref{equivAcon}, to prove $A$-connectivity it suffices to show that there exists $c: V(H) \rightarrow A$ such that $c(v)-c(u) \neq \overline{f}(e^*)$ for every directed edge $e=uv \in E(H)$.

Since $H$ is embedded in the projective plane, $H$ has a vertex $v_n$ with degree at most $5$, by Euler's formula. Order the vertices of $H$ as $v_0, ...., v_n$ so that $v_i$ is a vertex of degree at most $5$ in $H-\{v_{i+1}, \ldots, v_n\}$ for $0 \leq i \leq n-1$. To define the function $c$, first assign an arbitrary value from $A$ to $v_0$, and continue to assign $c(v_1), c(v_2), \ldots..$ in order so as not to violate our desired condition. When we come to some $v_i$, note that there are at most~$5$ edges incident to $v_i$ whose other end already has a $c$-value assigned. So there may be up to 5 different values that we need to avoid assigning to $c(v_i)$, but since $|A|\geq 6$, some legal choice is always possible. 
\end{proof}

Note that Theorem \ref{thm: no 2 disj negative cycles} includes the case $|A|=7$, which we actually omit from the statement of Theorem \ref{main}. This is because we don't know how to handle $|A|=7$ when the signed graph in question \emph{does} have two disjoint negative cycles. We are able to handle all other cases with $|A|\geq 6$ however, and we start working towards this in the following section.

\section{Decompositions}\label{sect:decompositions}

The main inspiration for our work lies in a decomposition of Seymour concerning \emph{$k$-bases}. These are are a refinement of the \emph{bases} concept that we discussed at the end of Section~2. Bases will  play a big role in our work starting in the latter half of this section; for now we will focus on $k$-bases and decompositions.

Seymour showed that a 3-connected cubic graph has a 1-base and a 2-base that are disjoint \cite{Seymour6flow}.\footnote{This construction is in the second proof, in Section 5 of \cite{Seymour6flow}. Note that this is different from the construction in the first proof, in Sections 3 and 4. In the first proof, a 2-base $B$ is found which is the edge-set of a collection of disjoint cycles. But $\overline{B}$ is not necessarily a 1-base.} DeVos noticed that a similar partition exists for signed graphs. To this end, we extend the definitions of $k$-closure and $k$-base to signed graphs in a way that is convenient for our purposes. 

If $G$ is a signed graph and $S \subseteq E(G)$, then the \emph{$k$-closure} $\langle S \rangle_k$ of $S$ is the minimal subset of edges $S' \subseteq E(G)$ such that $S \subseteq S'$, and there does not exist a positive cycle $C$ such that $1 \leq |E(C)\setminus S'| \leq k$. A \emph{$k$-base} is a subset $B \subseteq E(G)$ such that $\langle B \rangle_k = E(G)$. Note that in a $2$-edge-connected signed graph, a minimal 1-base is a base. 

We will also require a result on peripheral cycles due to Tutte \cite{TutteW.T.1963HtDa}. A \emph{peripheral cycle} in a (signed) graph $G$ is a cycle $C \subseteq G$  which is induced and such that $G-C$ is connected. Note that if $G$ is simple and cubic, a cycle $C$ is peripheral iff $G-E(C)$ is connected. Tutte showed that in 3-connected graphs, peripheral cycles generate the cycle space; namely he proved the following result. Here we use $\triangle$ to denote symmetric difference.

\begin{theorem}[Tutte \cite{TutteW.T.1963HtDa} 2.5]\label{thm: periph cyc generate cycle space}
Let $G$ be a 3-connected (signed) graph, and $C \subseteq G$ be a cycle. Then there exist peripheral cycles $P_1, P_2, \dots, P_k$ so that $E(C) = \triangle_{i = 1}^k E(P_i)$.
\end{theorem}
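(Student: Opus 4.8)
The plan is to follow Tutte's original strategy \cite{TutteW.T.1963HtDa}, which for the present statement is signature-free: being a cycle, being induced, and connectedness of $G-C$ all depend only on the underlying unsigned graph, so I may assume $G$ is an ordinary (and, as suffices here, simple) graph. I would also recast the statement in the language of the $\mathbb{F}_2$ cycle space: $E(C)$ is an element of $\mathcal{Z}(G)$, so the theorem is equivalent to the assertion that the peripheral cycles \emph{span} $\mathcal{Z}(G)$. I would prove this spanning statement, since the span is closed under symmetric difference and so more amenable to induction.

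For the main argument I would take a counterexample $(G,C)$ with $G$ $3$-connected and $E(C)$ not in the span of the peripheral cycles of $G$, chosen with $|E(C)|$ minimum and, subject to that, $|E(G)|$ minimum. Two immediate reductions clear the way. First, $C$ is not itself peripheral, else it is trivially such a symmetric difference. Second, $C$ has no chord: a chord $e=uv$ splits $C$ into two cycles $C_1,C_2$ of $G$ with $C=C_1\triangle C_2$, each strictly shorter than $C$, hence each in the span by minimality of $|E(C)|$, so $C$ is too. Thus $C$ is induced, but $G-V(C)$ is disconnected. Fix a component $H$ of $G-V(C)$ and let $c_1,\dots,c_r$ be its neighbours on $C$, listed in cyclic order; since $\{c_1,\dots,c_r\}$ separates $H$ from the remaining component(s) of $G-V(C)$, $3$-connectivity forces $r\ge 3$.

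The heart of the proof — and the step I expect to be the main obstacle — is to exploit the bridge $B$ consisting of $H$ together with its edges to $C$ in order to produce a peripheral cycle $P$ of $G$ with $E(P)\subseteq E(C)\cup E(B)$ that uses at least one edge of $B$ and omits at least one edge of $C$, and then to replace $C$ by $C\triangle P$. Concretely I would take an induced path $Q$ through $B$ joining a well-chosen pair of attachments $c_i,c_j$ and close it into a cycle with one of the two arcs of $C$ between them; the delicate points are (a) using $3$-connectivity and the cyclic arrangement of $c_1,\dots,c_r$ to guarantee that some such closed-up cycle is both induced and non-separating in all of $G$ (not merely in $C\cup B$), and (b) fixing a complexity measure — for instance the number of bridges of the current cycle, with $|E(C)|$ kept as the primary key — that strictly decreases for each cycle appearing in $C\triangle P$, so the induction closes. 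Once $C\triangle P$ is known to lie in the span, so does $C=\bigl(C\triangle P\bigr)\triangle P$, contradicting the choice of $(G,C)$ and finishing the proof.

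If this bridge analysis proves stubborn, the fallback is an induction on $|E(G)|$ driven by Tutte's wheel theorem: the base case is a wheel $W_n$, whose peripheral cycles are the rim together with the $n$ spoke-triangles, and a one-line count shows these span the $n$-dimensional cycle space; the inductive step deletes or contracts an edge $e$ with $G-e$ or $G/e$ again $3$-connected and transfers spanning back and forth. The obstacle there is of the same flavour — a peripheral cycle of $G$ need not stay peripheral after the edge operation, nor conversely — so one must match up the two families of peripheral cycles with care. Either way, the genuinely nontrivial ingredient is the behaviour of peripheral cycles relative to bridges; everything else is bookkeeping.
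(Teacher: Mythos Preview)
The paper does not prove this statement; it is quoted as Tutte's theorem with a citation to \cite{TutteW.T.1963HtDa} and then used as a black box (immediately, to deduce Corollary~\ref{cor: neg peiph cyc}). So there is no proof in the paper to compare your proposal against.

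On the proposal itself: your outline correctly identifies the architecture of Tutte's argument, and the chord-splitting reduction to an induced $C$ is fine. The gap is precisely where you flag it. Once $C$ is induced and non-peripheral, replacing $C$ by $C\triangle P$ (which, with $E(P)\subseteq E(C)\cup E(B)$ as you set it up, is again a single cycle: the complementary arc of $C$ closed by the path $Q$ through the bridge) need not decrease $|E(C)|$, so your primary key does not drive the induction. The secondary key you suggest, the number of bridges, is indeed the right invariant, but showing it strictly drops requires a careful choice of the attachments $c_i,c_j$ and of $Q$, together with a verification that the resulting $P$ is peripheral in all of $G$ and not merely in $C\cup B$; that choice and that verification are the actual content of Tutte's theorem, and you have located them but not supplied them. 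Your wheel-theorem fallback runs into exactly the transfer problem you already name. So what you have is an accurate roadmap of where the work lies, not yet a proof.
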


\begin{corollary} \label{cor: neg peiph cyc}
If $G$ is an unbalanced, 3-connected signed graph, then $G$ has a negative peripheral cycle.
\end{corollary}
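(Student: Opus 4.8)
The plan is to derive this as an immediate consequence of Tutte's Theorem~\ref{thm: periph cyc generate cycle space} together with the observation that ``being a negative cycle'' is exactly ``having odd intersection with some fixed edge-cut''. First I would note that since $G$ is unbalanced, there is a nonempty set of negative cycles; equivalently, working with the signature $\sigma$, the negativity of a cycle $C$ is detected by the $\mathbb{Z}_2$-linear functional $C \mapsto \sum_{e \in E(C)} (1-\sigma(e))/2 \pmod 2$ on the cycle space. (Switching replaces $\sigma$ by an equivalent signature and does not change which cycles are negative, so this is well-defined; this is recorded in Section~2.) Pick any negative cycle $C$ in $G$ — one exists because $G$ is unbalanced.

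Next I would apply Theorem~\ref{thm: periph cyc generate cycle space} to $C$: there are peripheral cycles $P_1, \dots, P_k$ with $E(C) = \triangle_{i=1}^k E(P_i)$. Since the parity functional above is linear over $\mathbb{Z}_2$ with respect to symmetric difference, the sum of the parities of the $P_i$ equals the parity of $C$, which is $1$ (odd, i.e. negative). Hence at least one $P_i$ must itself be a negative cycle. That $P_i$ is peripheral by construction, so it is the desired negative peripheral cycle, and we are done.

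The only subtlety — and the one place the argument needs a word of care rather than being a pure triviality — is confirming that negativity really is additive under symmetric difference of edge sets. This is standard: if $C_1, C_2$ are cycles (or more generally elements of the cycle space) then $\sigma$-weight mod $2$ is additive because $\sum_{e \in E(C_1) \triangle E(C_2)} w(e) \equiv \sum_{e \in E(C_1)} w(e) + \sum_{e \in E(C_2)} w(e) \pmod 2$ for any weight function $w$, the overlap contributing an even amount. I expect this to be the main (minor) obstacle only in the sense that one must make sure Theorem~\ref{thm: periph cyc generate cycle space} is being invoked correctly for signed graphs — but the cited statement already covers signed graphs verbatim, since peripheral cycles and the cycle space depend only on the underlying graph. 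So the proof is essentially two sentences: take a negative cycle, expand it in peripheral cycles via Tutte, and conclude one summand is negative by parity.
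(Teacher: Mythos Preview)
Your proposal is correct and follows essentially the same approach as the paper's own proof: pick a negative cycle, write it as a symmetric difference of peripheral cycles via Theorem~\ref{thm: periph cyc generate cycle space}, and use parity of the number of negative edges to conclude some $P_i$ is negative. The paper phrases the parity step as ``the symmetric difference of positive cycles contains an even number of negative edges,'' which is exactly your $\mathbb{Z}_2$-linearity observation.
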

\begin{proof}
Let $C$ be a negative cycle of $G$. Let $P_1, P_2, \dots, P_k$ be a list of peripheral cycles so that $E(C) = \triangle_{i = 1}^k E(P_i)$ by Theorem \ref{thm: periph cyc generate cycle space}. Note that the symmetric difference of positive cycles contains an even number of negative edges. Since $C$ has an odd number of negative edges, there must be a negative peripheral cycle in the list.
\end{proof}

The following theorem is a signed version of Seymour's decomposition due to DeVos.
The proof is similar to the unsigned version, but, crucially, when constructing the decomposition one must begin with a negative peripheral cycle. 
The proof of Theorem \ref{thm: decomposition into tree and 2-base} will be necessary for Theorem \ref{thm: decomposition to connected base and almost 2-base}.
Theorem \ref{thm: decomposition into tree and 2-base} is also used in Section \ref{subsect:composite}, to prove our main result for groups of composite order.
For convenience, in the remainder of the document we will view a subset of edges of a signed graph as also a signed graph itself, whose vertex-set is the ends of those edges. We apologize for this abuse of notation.

\begin{theorem}[DeVos \cite{Matt}]\label{thm: decomposition into tree and 2-base}
If $G$ is a 3-connected, cubic signed graph, then there exists a partition $X_1\cup X_2$ of $E(G)$ such that $X_1$ is a spanning tree and $X_2$ is a 2-base.
\end{theorem}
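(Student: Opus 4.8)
The goal is to partition $E(G)$ into a spanning tree $X_1$ and a $2$-base $X_2$. The natural approach mirrors Seymour's second proof, and the essential point (flagged by the paper itself) is that one must seed the construction with a negative peripheral cycle. So first I would invoke Corollary~\ref{cor: neg peiph cyc}: since $G$ is $3$-connected and cubic, if $G$ is unbalanced it has a negative peripheral cycle $C_0$; if $G$ happens to be balanced, I would instead take any peripheral cycle, which exists in any $3$-connected graph on at least $4$ vertices (and the cubic $3$-connected case with fewer vertices is trivial to check by hand). Set $X_2^{(0)} := E(C_0)$ and note $C_0$ is induced with $G - E(C_0)$ connected.

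**Main construction.** The idea is to grow $X_2$ greedily, one edge at a time, always maintaining the two invariants: (a) the current edge set $X_2^{(i)}$ has the property that $G$ minus these edges is connected, and (b) $X_2^{(i)}$ is contained in some peripheral-type structure so that its $2$-closure can eventually swallow everything. Concretely, at step $i$, if $X_2^{(i)}$ is already a $2$-base we stop and let $X_1$ be the complement; one checks the complement is a spanning tree by a counting argument using $|E(G)| = \tfrac{3}{2}|V(G)|$, the fact that a $2$-base of a cubic graph that is the edge set of a disjoint union of cycles has exactly $|V(G)|/2$ edges, and connectivity of the complement (which gives acyclicity once the edge count matches $|V(G)|-1$). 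Wait — more carefully: Seymour's structure here is that $X_2$ ends up being the edge set of a disjoint union of cycles whose total length is $|V(G)|/2 \cdot$ something; I would pin down the exact bookkeeping from $|X_1| = |V(G)| - 1$ and $|X_1| + |X_2| = \tfrac{3}{2}|V(G)|$, so $|X_2| = \tfrac{1}{2}|V(G)| + 1$. If $X_2^{(i)}$ is not yet a $2$-base, there is a positive cycle $C$ with $1 \le |E(C) \setminus \langle X_2^{(i)}\rangle_2| \le 2$; I would show (this is the crux) that one can add a single suitably chosen edge, or swap along a peripheral cycle, to strictly enlarge the $2$-closure while keeping $G$ minus the set connected. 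The mechanism is Theorem~\ref{thm: periph cyc generate cycle space}: any cycle, in particular an obstructing positive cycle, is a symmetric difference of peripheral cycles, so there is a peripheral cycle $P$ with an edge outside the current closure, and peripheral cycles are exactly the ones whose removal keeps $G$ connected.

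**The obstacle.** The hard part is the simultaneous maintenance of both invariants — that adding edges to $X_2$ keeps its complement connected (so $X_1$ stays a tree-candidate) while the $2$-closure genuinely grows. In the unsigned case Seymour handles this via a careful choice dictated by peripheral cycles; the signed modification is that a positive obstructing cycle $C$ need not itself be peripheral, and the peripheral cycles produced by Theorem~\ref{thm: periph cyc generate cycle space} may be negative, so one cannot naively fold a whole peripheral cycle into $X_2$ (that could trap a negative cycle or barbell in the wrong place and break the $2$-base accounting). I expect the resolution to be exactly the ``begin with a negative peripheral cycle'' trick: because $C_0$ is negative, the parity argument from Corollary~\ref{cor: neg peiph cyc} guarantees that whenever an obstruction arises, at least one peripheral cycle in the generating list shares the right parity relationship with the already-built negative part, so a legal single-edge augmentation of $X_2$ exists. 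I would formalize this by an extremal/maximality argument: take $X_2 \supseteq E(C_0)$ maximal subject to $G$ minus $X_2$ being connected, then argue by contradiction that $\langle X_2 \rangle_2 = E(G)$, using Theorem~\ref{thm: periph cyc generate cycle space} to locate a peripheral cycle witnessing a failure and deriving from maximality (plus the negative seed) that no such failure can occur. Finally, with $X_2$ a $2$-base and $G$ minus $X_2$ connected, the edge-count identity forces the complement $X_1$ to be a spanning tree, completing the proof.
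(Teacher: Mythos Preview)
Your seed is right --- a negative peripheral cycle when $G$ is unbalanced --- but the growth mechanism has a genuine gap. Taking $X_2 \supseteq E(C_0)$ \emph{maximal} subject to $G - X_2$ connected does force $X_1 := E(G)\setminus X_2$ to be a spanning tree (that part is fine, and your edge count $|X_2| = \tfrac12|V(G)|+1$ is correct once you drop the confused ``disjoint union of cycles'' digression). But maximality says nothing about $\langle X_2\rangle_2$: there are plenty of cotrees in a cubic $3$-connected graph that are not $2$-bases, so the entire content of the theorem is still missing at that point. Your proposed contradiction via Theorem~\ref{thm: periph cyc generate cycle space} does not go through --- knowing that an obstructing positive cycle is a symmetric difference of peripheral cycles does not let you add an edge to $X_2$ while keeping the complement connected, because the peripheral cycles produced need have no particular relationship to the spanning tree $X_1$ you already committed to.

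The paper's argument is structurally different. Rather than growing $X_2$ against a fixed tree, it maintains a \emph{three}-part partition $A\cup B\cup C$ of $E(G)$ with: $A\cup B$ $2$-connected; $C$ connected with all degrees $1$ or $3$; $A\cup C$ containing a spanning tree; $A\subseteq\langle B\rangle_2$; and $B$ containing a (negative, if $G$ is unbalanced) cycle. The seed is $A=\emptyset$, $B=E(C_0)$, $C$ the rest. One then shrinks $C$ not edge-by-edge but by extracting a \emph{path} $P\subseteq C$ between degree-$1$ vertices of $C$ such that $C-E(P)$ has a single bridge (this is a separate lemma, proved by a lexicographic maximization over bridge sizes). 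The end-edges of $P$ go to $A$, the interior to $B$. The crucial verification --- that the two end-edges land in $\langle B'\rangle_2$ --- uses the negative cycle sitting in $B$ together with $2$-connectivity of $A\cup B$ to build a theta through both end-edges, guaranteeing a positive cycle that captures them. This theta/positive-cycle step is exactly where the negative seed is cashed in, and it is absent from your sketch.
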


\begin{proof}
We will follow the line Seymour sketched at the end of \cite{Seymour6flow}.
Choose a partition (with possibly empty parts) $A \cup B \cup C$ of $E(G)$ so that 
\begin{enumerate}[(a)]
    \item $A \cup B$ is a 2-connected graph;
    \item\label{prop: C} $C$ is connected, and all vertices of $C$ have degree 1 or 3 in $C$;
    \item\label{prop: tree} $A \cup C$ contains a spanning tree of $G$;
    \item\label{prop: 2-base} The 2-closure of $B$ contains $A$.
    \item\label{property: -cyc in B} $B$ contains a cycle, which is negative if $G$ is unbalanced.
\end{enumerate}

First, we show that such a partition exists. To this end, choose a peripheral cycle $D$ by Theorem \ref{thm: periph cyc generate cycle space}. If $G$ is unbalanced, choose $D$ to be negative by Corollary \ref{cor: neg peiph cyc}. Then $A = \emptyset, B = E(D),$ and $C = E(G) \setminus E(D)$ is a partition satisfying the requirements. 

Choose such a partition with $C$ minimal. If $C$ is empty, then the theorem is proved because we can choose a spanning tree $X_1 \subseteq A$ by property \ref{prop: tree}, and let $X_2 = E(G) \setminus X_1$, which is a 2-base by property \ref{prop: 2-base}.

Suppose for contradiction that $C \neq \emptyset$.

\setcounter{claim}{0}
\begin{claim}\label{Claim: path P in C}
There is a path $P \subseteq C$ whose ends have degree 1 in $C$, and so that $C - E(P)$ has at most three components, two of which are isolated vertices. 
\end{claim}

\vspace{-13pt}
\begin{proof}[Proof of claim]
Following Tutte, we will refer to the connected components of $C - E(P)$ that are not an end of $P$ as \emph{bridges}.
If $C = K_2$, the claim follows easily. We assume to the contrary. Hence $C$ has at least three vertices of degree~1 (and is connected, by \ref{prop: C}). We must find a path $P$ with exactly one bridge. 

Let the size of a bridge $\Gamma$ be $|V(\Gamma)| + |E(\Gamma)|$. Choose $P$ to lexicographically maximize the sizes of the bridges. That is, the biggest bridge has largest possible size, and subject to that, the second biggest bridge has largest possible size, etc. This ordering has the advantage that increasing the size of any bigger bridge at the expense only of smaller bridges improves the ordering. With this in mind, let $\Gamma$ be the smallest bridge and suppose, for a contradiction, that $\Gamma$ is not the only bridge. Since $C$ has vertices only of degree 1 or 3, every internal vertex $v$ of $P$ has exactly one edge going to some bridge. We will say that bridge \emph{attaches} at $v$. 

Suppose first that the attachments of $\Gamma$ are not consecutive along $P$, as in Figure \ref{fig:LexBridges} (a). Then there must be a path $P' \subseteq P \cup \Gamma$ whose ends are the ends of $P$, and which avoids some attachment vertex of some other bridge $\Gamma'$ (which is bigger than $\Gamma$). This path $P'$ contradicts our choice of $P$.

So it must be that the attachments of $\Gamma$ are consecutive along $P$, as in Figure \ref{fig:LexBridges} (b). This means the two edges of $P$ occurring immediately before and immediately after the attachments of $\Gamma$ form a 2-edge-cut in $C$. Because $G$ is 3-edge-connected, there must be a vertex $v$ of degree 1 in $\Gamma$. Also, since $\Gamma$ is not the only bridge (by assumption), there is some vertex $v' \in V(P)$ that is an attachment of some other (bigger) bridge $\Gamma'$. But now $P \cup \Gamma$ contains a path $P'$ that begins at $v$, avoids $v'$, and ends at one of the ends of $P$. Again, $P'$ contradicts our choice of $P$. This concludes the proof of Claim \ref{Claim: path P in C}.
\end{proof}

\begin{figure}[htp]
    \centering
    \includegraphics[scale = 0.6]{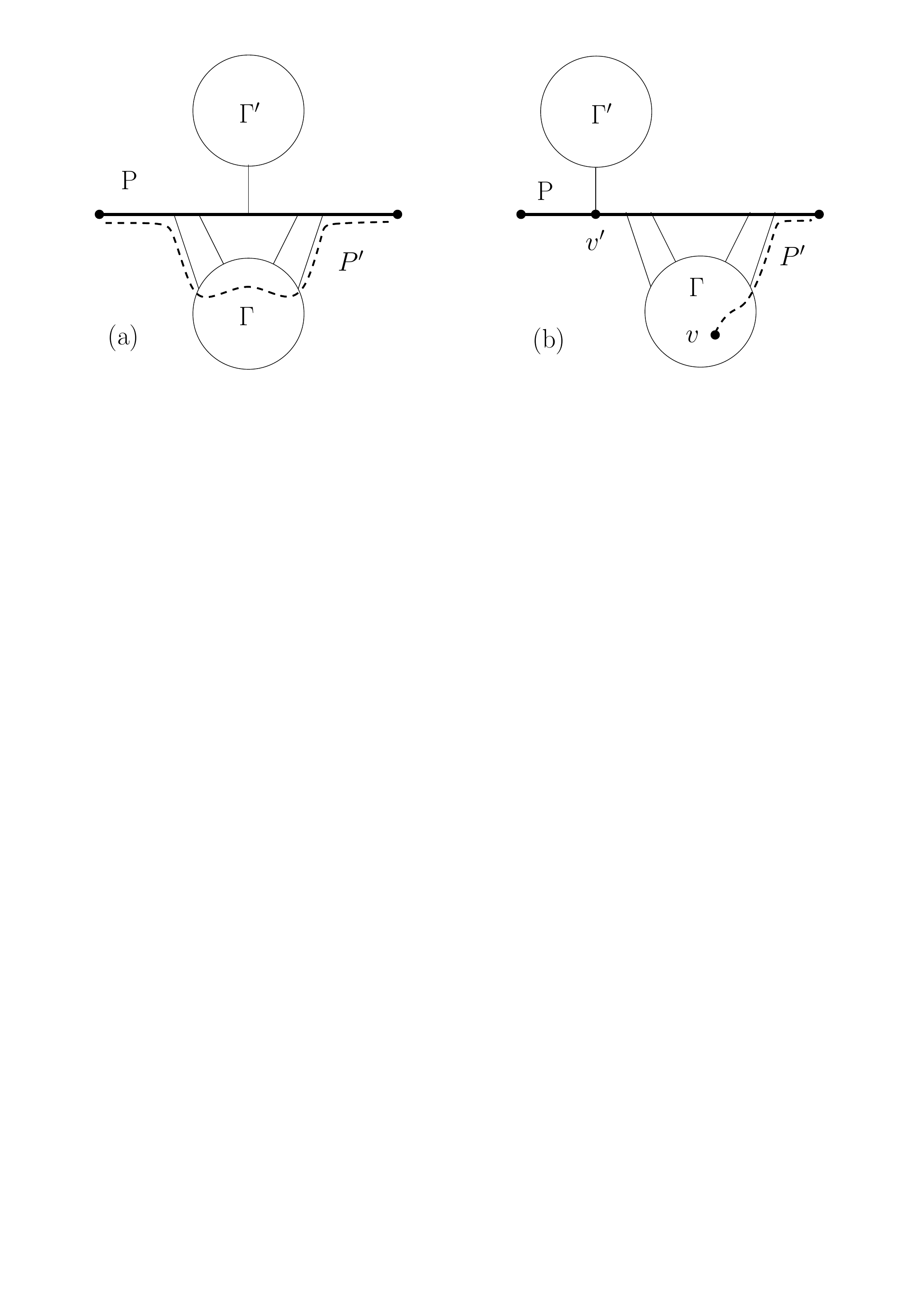}
    \caption{The path $P$ (bolded), with bridges $\Gamma$,$\Gamma'$ in $C$, and the path $P'$ (dashed) which improves $P$.}
    \label{fig:LexBridges}
\end{figure}

Let $P \subseteq C$ be a path as in Claim \ref{Claim: path P in C}. We will use $P$ to improve our partition.
Let $X$ be the end-edges of $P$. Construct $C'$ = $C\setminus E(P)$, $A' = A \cup X$, and $B' = B \cup E(P) \setminus X$.
We now verify that $A' \cup B' \cup C'$ is a partition which satisfies the five properties above. We examine each in order.
\begin{enumerate}[(a)]
    \item Observe that the endpoints of $P$ are in $A \cup B$, and $A' \cup B' \setminus A \cup B = E(P)$. Since $A \cup B$ is 2-connected, so is $A' \cup B'$.
    \item That $C'$ is connected follows from Claim \ref{Claim: path P in C}. And deleting the edges of a path between degree-1 vertices preserves the degree conditions.
    \item By Claim \ref{Claim: path P in C}, $C$ remains connected when the edges of $P$ are removed. Since the end-edges of $P$ are put into $A$, and since the internal vertices of $P$ have an incident edge in $C\setminus E(P)$, it means $A' \cup C'$ is a connected graph on the same vertex set as $A \cup C$. Since $A \cup C$ contains a spanning tree of $G$, so does $A' \cup C'$.
    \item We must verify that $X \subseteq \langle B' \rangle_2$. Let $x,y$ be the end-vertices of $P$. We know $\langle B \rangle _2$ is 2-connected, $x,y \in V(\langle B \rangle _2)$, the edges $E(P)\setminus X$ are in $B'$, and $B$ contains a cycle $D$ which is negative if $G$ is unbalanced. By 2-connectivity of $\langle B \rangle_2$, there exist two vertex-disjoint paths, say $P_x$ and $P_y \subseteq \langle B \rangle_2$, from $D$ to $\{x,y\}$. But now $Y = P \cup P_x \cup P_y \cup D$ has a positive cycle containing $X$, and $E(Y) \setminus X \subseteq \langle B' \rangle _2$. But this means $X \in \langle B' \rangle _2$.
    \item The cycle that is in $B$ is also in $B'$.
\end{enumerate}

Because $|C'| < |C|$, this means $A' \cup B' \cup C'$ contradicts our choice of $A \cup B \cup C$. Therefore $C$ is empty, which completes the proof.
\end{proof}

Our next goal is Theorem \ref{thm: decomposition to connected base and almost 2-base}, which is another extension of Seymour's decomposition to signed graphs.
However, first we require the following theorem, known as the 2-linkage theorem (actually will use Corollary~\ref{cor:to 2-link for cubic 3conn}), and Proposition~\ref{prop: peripheral cycle leaving G unbalanced} which is about the existence of a positive or negative peripheral cycle of a given sign disjoint from some negative cycle.

\begin{theorem}[Seymour \cite{SeymourP.D.1980Dpig}, Thomassen \cite{ThomassenCarsten19802G}]\label{thm: 2-linkage}
Let G be a 2-connected graph 
with vertices $x_1,x_2,y_1,y_2$. Then $G$ contains two edge-disjoint paths connecting $x_1$ with $x_2$ and $y_1$ with $y_2$, respectively, unless by a sequence of edge-contractions a graph $G'$ can be obtained from $G$, such that $x_1,x_2,y_1,y_2$ are contracted into $x'_1,x'_2,y'_1,y'_2$ respectively, and where $G'$ is 
\begin{enumerate}
    \item a 4-cycle with vertices $x'_1,y_1',x_2',y'_2$ in that cyclic order; or
    \item obtained from a 2-connected planar cubic graph by selecting a facial cycle and inserting the vertices $x'_1,y_1',x_2',y'_2$ in that cyclic order on the edges of that cycle.
\end{enumerate}
\end{theorem}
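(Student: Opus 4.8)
The plan is to reconstruct the Seymour--Thomassen argument: induct on $|E(G)|$, peeling off small edge-cuts until one reaches a highly edge-connected ``core'', and then analyse that core directly by choosing a cycle through the four terminals. Throughout, call the desired pair of edge-disjoint paths a \emph{linkage}, and say the terminals are in \emph{crossing order} if some cyclic arrangement lists them as $x_1, y_1, x_2, y_2$, and in \emph{parallel order} otherwise; note that the two exceptional graphs $G'$ in the statement have their terminals in crossing order.

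First I would dispose of the low-connectivity reductions. If $G$ has a cut vertex, or more generally an edge-cut $\delta(U)$ of size at most two, then either $\delta(U)$ separates $\{x_1,x_2\}$ from $\{y_1,y_2\}$ --- in which case no linkage exists, and contracting each of $G[U]$ and $G[\,\overline{U}\,]$ after routing the terminals exhibits $G'$ as the $4$-cycle of case~(1) --- or it does not, in which case all four terminals (or three of them together with a ``merged'' copy) may be taken on one side, the other side contracted to a single vertex or to an edge joining its two cut-edges, and induction applied to the strictly smaller $2$-connected instance, whose linkage lifts back to $G$. The same bookkeeping, using Menger's theorem to determine which pairs a cut separates, handles $3$-edge-cuts: either one side is irrelevant and gets contracted, or the $3$-cut together with the positions of the terminals already forces the structure in case~(2) --- which is precisely the situation the paper extracts in Corollary~\ref{cor:to 2-link for cubic 3conn}. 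After these steps we may assume $G$ is internally highly edge-connected with the terminals spread out.

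Next comes the core argument. Since $G$ is $2$-connected, a fan/Menger argument produces a cycle $C$ through all four terminals (the case where no such cycle exists is again broken down by a small-cut reduction). If the terminals occur on $C$ in parallel order, the two arcs of $C$ cut out by $\{x_1,x_2\}$ and $\{y_1,y_2\}$ are already edge-disjoint, and we are done. So assume they occur in crossing order $x_1, y_1, x_2, y_2$, and consider the bridges of $C$ (connected pieces of $G-E(C)$ with their attachments). If some bridge has attachments both on the open arc between $x_1$ and $y_1$ and on the open arc between $x_2$ and $y_2$ --- a \emph{crossing bridge} --- then rerouting one of the two partial paths through it yields the linkage. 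If there is no crossing bridge, every bridge is confined to one of the four arcs; combined with the edge-connectivity secured above, a standard absorption argument keeps $C$ bounding a face, so $G$ embeds in the plane with $C$ facial and the terminals in crossing order, and contracting each bridge (or inflating high-degree vertices to trees to match the cubic shape in case~(2)) gives $G'$ as required.

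The main obstacle I expect is this last implication: that, for a $G$ reduced as above, the \emph{absence} of a crossing bridge genuinely forces a planar embedding with $C$ facial rather than merely restricting where bridges sit. This is where one must verify that the earlier cut-reductions have left $G$ connected enough that no bridge can ``jump'' around $C$ by stringing through other bridges, and it is also where the two shapes of obstruction in the statement separate --- the bare $4$-cycle when $C$ has no bridges at all, the planar-cubic graph otherwise. The remaining care, namely keeping everything phrased in terms of edge-cuts, edge-disjoint paths, and edge-contractions (so as to match the edge version stated here and the cubic hypothesis used in Corollary~\ref{cor:to 2-link for cubic 3conn}), is routine bookkeeping rather than a real difficulty.
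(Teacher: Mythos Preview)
The paper does not prove this theorem; it is quoted as a known result of Seymour and Thomassen and used as a black box (to derive Corollary~\ref{cor:to 2-link for cubic 3conn}). So there is no proof in the paper to compare your proposal against.

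That said, your sketch has at least one concrete error in the low-connectivity reductions. A $2$-edge-cut $\delta(U)$ with $\{x_1,x_2\}\subseteq U$ and $\{y_1,y_2\}\subseteq \overline{U}$ does \emph{not} obstruct the linkage --- it makes it trivial, since an $x_1$--$x_2$ path can stay inside $G[U]$ and a $y_1$--$y_2$ path inside $G[\overline{U}]$, and these are automatically edge-disjoint. The $4$-cycle obstruction in case~(1) has the terminals alternating $x_1',y_1',x_2',y_2'$, not with the $x$'s separated from the $y$'s; the correct small-cut analysis is according to which pairs are split, and even then a $2$-edge-cut by itself is not an obstruction to \emph{edge}-disjoint paths. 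More seriously, you rightly identify the heart of the matter --- that the absence of a crossing bridge forces the planar structure of case~(2) --- but you only flag it as ``the main obstacle'' and do not supply the argument. That step is the substance of the Seymour--Thomassen theorem; in the original papers it requires considerable work, not just ``a standard absorption argument''. Your outline is in the right spirit, but as written it is a plan rather than a proof.
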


\begin{corollary}\label{cor:to 2-link for cubic 3conn}
Let $\Gamma$ be a 3-connected cubic signed graph where there does not exist $X \subseteq V(\Gamma)$ with $|X| \geq 2$, $|\delta(X)| = 3$, and $\Gamma[X]$ balanced. Let $G$ be a $2$-connected, balanced, induced subgraph of $\Gamma$, and let $x_1,x_2,y_1,y_2$ be vertices of degree $2$ in $G$. Then both of the following are true.
\begin{enumerate}
    \item\label{2-link cor1} If $x_1,x_2,y_1,y_2$ are the only vertices of degree $2$ in $G$, then $G$ contains two edge-disjoint paths connecting $x_1$ with $x_2$ and $y_1$ with $y_2$, respectively, unless $G$ can be embedded in the plane so that $x_1,y_1,x_2,y_2$ occur in that cyclic order on the edges of some facial cycle.
    \item\label{2-link cor2} If $y_3$ is an additional vertex of degree $2$ in $G$, then $G$ contains two disjoint paths $P_x,P_y$ so that the ends of $P_x$ are $x_1,x_2$, and the ends of $P_y$ are in the set $\{y_1,y_2,y_3\}$.
\end{enumerate}
\end{corollary}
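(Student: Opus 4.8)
The plan is to deduce both statements directly from the 2-linkage Theorem (Theorem~\ref{thm: 2-linkage}) by showing that the exceptional outcomes of that theorem, when pulled back through the hypotheses on $\Gamma$, force the embedding situation described in the corollary (for part~\ref{2-link cor1}) or are outright impossible (for part~\ref{2-link cor2}).

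\emph{Part \ref{2-link cor1}.} First I would apply Theorem~\ref{thm: 2-linkage} to $G$ with the four terminals $x_1,x_2,y_1,y_2$. If the linkage exists we are done, so suppose we land in one of the two exceptional cases. In case~(1) the contracted graph $G'$ is a 4-cycle through $x_1',y_1',x_2',y_2'$ in that order; I would argue that $G$ itself, having only $x_1,x_2,y_1,y_2$ as its degree-$2$ vertices, is then a cycle with these four vertices in the stated cyclic order (each ``side'' of the 4-cycle $G'$ is a path in $G$ all of whose internal vertices have degree $3$ in $G$, hence are incident to an edge of $\delta_\Gamma(V(G))$; since the two ends of such a side together with that side form a balanced subgraph, one must rule out a nontrivial side using the no-small-balanced-$3$-cut hypothesis on $\Gamma$, which forces each side to be a single edge only when needed — more carefully, a cycle is planar and the four terminals lie on its unique facial cycle in the right order, so the exception of the corollary applies). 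In case~(2), $G'$ is obtained from a $2$-connected planar cubic graph by inserting the terminals on a facial cycle; uncontracting, $G$ is planar (contractions don't destroy planarity and the terminals sit on a facial cycle of the planar cubic graph, hence on a facial cycle of $G$) with $x_1,y_1,x_2,y_2$ in the required cyclic order on that face. Either way we obtain exactly the stated planar-embedding exception. The one point needing care is translating ``degree $2$ in $G$'' and ``balanced induced subgraph of $\Gamma$'' correctly: a nontrivial path contracted in Theorem~\ref{thm: 2-linkage} could hide extra structure, but since $G$ is an \emph{induced} subgraph of the cubic graph $\Gamma$ with only these four degree-$2$ vertices, every internal vertex of such a path has degree $3$, so the contractions are along paths of degree-$3$ vertices and the planarity/cyclic-order conclusions transfer verbatim to $G$.

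\emph{Part \ref{2-link cor2}.} Here we want disjoint (not merely edge-disjoint) paths $P_x$ from $x_1$ to $x_2$ and $P_y$ between two of $y_1,y_2,y_3$. I would reduce to the vertex version of linkage by a standard trick: since $G$ is $2$-connected, apply Theorem~\ref{thm: 2-linkage} (or its vertex-linkage consequence) trying each of the three pairings $\{y_i,y_j\}$ for the second path in turn. If some pairing gives the desired disjoint paths, done. Otherwise every pairing falls into an exceptional case; I would then show this is incompatible with $G$ having \emph{five} or more vertices of degree $2$ (the $x_1,x_2,y_1,y_2,y_3$) sitting inside the cubic $3$-connected $\Gamma$ with no balanced $X$ of size $\ge 2$ with $|\delta(X)|=3$. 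Concretely, the only way all three pairings fail is the planar-facial-cycle obstruction, and with five prescribed vertices of degree $2$ on a common face one can always route $x_1x_2$ along one arc of that face and route a $y_iy_j$ pair along a disjoint arc, since five points on a circle always admit a pair $\{x_1,x_2\}$ and a disjoint pair from $\{y_1,y_2,y_3\}$ separating into two arcs — a small case analysis on the cyclic order finishes it. The $4$-cycle exceptional case is similarly eliminated because a $4$-cycle cannot carry five vertices of degree $2$.

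\emph{Main obstacle.} I expect the genuinely fiddly step to be \emph{part \ref{2-link cor1}, the transfer of the exceptional structure of the contracted graph $G'$ back to $G$ itself} — in particular making sure that ``$x_1,y_1,x_2,y_2$ occur in that cyclic order on a facial cycle of $G$'' really does follow (and is not spoiled by the terminals being contracted together or by the planar embedding of $G'$ not lifting to a planar embedding of $G$ with the right face). The hypothesis that $\Gamma$ has no balanced vertex subset $X$ with $|X|\ge 2$ and $|\delta(X)|=3$ is exactly what forbids the degenerate contractions (it prevents a ``side path'' of $G'$ from being collapsed in a way that would let two terminals coincide or let a non-facial cycle masquerade as facial), so the argument hinges on deploying that hypothesis at precisely the right moment. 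Everything else is bookkeeping on top of Theorem~\ref{thm: 2-linkage}.
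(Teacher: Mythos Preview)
Your proposal has the right starting point (apply Theorem~\ref{thm: 2-linkage}) but misses the key mechanism in Part~\ref{2-link cor1}, and takes an unnecessarily complicated route in Part~\ref{2-link cor2}.

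\textbf{Part~\ref{2-link cor1}: a real gap.} Your assertion ``uncontracting, $G$ is planar (contractions don't destroy planarity \ldots)'' is the wrong direction: planarity passes to minors, not from minors. If $G'$ is planar you cannot conclude $G$ is planar by uncontracting --- the contracted blobs could be arbitrary. The paper's argument avoids this entirely by showing that the sequence of contractions is \emph{empty}, i.e.\ $G'=G$. The mechanism is exactly the hypothesis you flagged but did not use precisely: for any $v\in V(G')$ with preimage $A\subseteq V(G)$, one has $|\delta_G(A)|=\deg_{G'}(v)\in\{2,3\}$. Since the four terminals are the only degree-$2$ vertices of $G$ and go to distinct vertices of $G'$, at most one terminal lies in $A$, whence $|\delta_\Gamma(A)|=3$ in every case. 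As $G[A]\subseteq G$ is balanced, the no-balanced-$3$-cut hypothesis forces $|A|=1$. Thus $G'=G$, and the planar embedding with $x_1,y_1,x_2,y_2$ on a facial cycle is literally an embedding of $G$. Your ``transfer'' and ``lifting'' language is both unnecessary and, as written, incorrect.

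\textbf{Part~\ref{2-link cor2}: overcomplicated, and with its own gaps.} Your plan tries all three pairings and argues about five points on a common face, but nothing guarantees all five degree-$2$ vertices lie on one facial cycle (the $2$-linkage obstruction only places four of them there), and your dismissal of the $4$-cycle case (``a $4$-cycle cannot carry five vertices of degree~$2$'') conflates $G'$ with $G$. The paper's argument is much shorter: assume no such $P_x,P_y$ exist for the pairing $\{y_1,y_2\}$; then from the obstruction one extracts a cycle $C$ in $G$ through $x_1,y_1,x_2,y_2$ in that cyclic order. Take a (possibly trivial) path $P$ from $y_3$ to $C$; its landing point is not $x_1$ or $x_2$ since those have degree~$2$. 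Then $C\cup P$ visibly contains an $x_1$--$x_2$ arc of $C$ disjoint from a path joining two of $y_1,y_2,y_3$, contradiction. No case analysis on cyclic orders of five points is needed.
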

\begin{proof}
First we prove 1. By Theorem~\ref{thm: 2-linkage}, either the desired paths exist, or by a sequence of contractions, a signed graph $G'$ can be obtained from $G$ such that $x_1,x_2,y_1,y_2$ are contracted into $x'_1,x'_2,y'_1,y'_2$ and $G'$ is one of the two types listed in that theorem. We will show that actually $G' = G$, i.e., the sequence of contractions is empty.
Let $v \in V(G')$ and let $A$ be the set of vertices of $G$ that were contracted into $v$. It is sufficient to show that $|A| = 1$. If $v \in \{x'_1,x'_2,y'_1,y'_2\}$, then $\delta(v)$ is a 2-edge-cut in $G'$. Further, $|x_1,x_2,y_1,y_2| \cap A = 1$ because these four vertices of degree 2 in $G$ are each contracted into a separate vertex. This means $\delta(A)$ is actually a 3-edge-cut in $\Gamma$, since $\deg_{G'}(v) = 2$ and there is only one vertex in $A$ with degree $2$ in $G$ (whose extra incident edge in $\Gamma$ contributes to the edge-cut). Hence by the assumptions of the corollary, $|A| = 1$. 
Otherwise, if $v \neq x_1',x'_2,y'_1,y'_2$, then $|\delta_{\Gamma}(v)| = 3$, and $x_1,x_2,y_1,y_2 \notin A$. Again, $\delta(A)$ is a 3-edge-cut in $\Gamma$, and by the assumptions of the corollary, $|A| = 1$. 

Now we prove 2. Suppose for contradiction the paths $P_x,P_y$ do not exist. Then by item 1 of this corollary there exists a cycle $C$ with vertices $x_1,y_1,x_2,y_2$ occurring in that cyclic order. But since $G$ is connected, there exists a (possibly trivial) path $P$ joining $y_3$ and $C$. The end of $P$ on $C$ is not $x_1$ or $x_2$ because they have degree 2. But this means $C \cup P$ contains the desired paths $P_x,P_y$, a contradiction.
\end{proof}

\begin{proposition}\label{prop: peripheral cycle leaving G unbalanced}
Let $G$ be a 3-connected, cubic signed graph.
\begin{enumerate}
    \item $G$ has a peripheral cycle $D$ so that $G-E(D)$ is unbalanced if and only if there exist disjoint cycles $C,N$ in $G$ with $N$ negative.
    \item $G$ has a negative peripheral cycle $D$ so that $G-E(D)$ is unbalanced if and only if there exist disjoint cycles $C,N$ in $G$, with both $C$ and $N$ negative.
\end{enumerate}
\end{proposition}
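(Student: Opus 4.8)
The plan is to prove both biconditionals by relating peripheral cycles to the cycle-space machinery of Theorem~\ref{thm: periph cyc generate cycle space}, much as in the proof of Corollary~\ref{cor: neg peiph cyc}. For each direction, one implication is essentially trivial: if $D$ is a peripheral cycle with $G-E(D)$ unbalanced, then $G-E(D)$ contains a negative cycle $N$, and $N$ is disjoint from $D$ (in part~2 we additionally take $C=D$, which is negative by hypothesis, so $C,N$ are disjoint negative cycles; in part~1 no sign condition on $D$ is needed). So the content is in the reverse (``only if'') directions: given disjoint cycles $C,N$ with $N$ negative (and, in part~2, $C$ negative too), produce the desired peripheral cycle $D$.

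First I would handle part~1. Assume disjoint cycles $C,N$ exist with $N$ negative. Consider $G-E(C)$; since $N\subseteq G-E(C)$ and $N$ is negative, $G-E(C)$ is unbalanced. Now I want to ``shrink'' $C$ to a peripheral cycle without destroying unbalancedness of the complement. Since $G$ is $3$-connected, apply Theorem~\ref{thm: periph cyc generate cycle space} to write $E(C)=\triangle_{i=1}^k E(P_i)$ for peripheral cycles $P_i$. The key observation is that $G-E(C)$ unbalanced forces at least one $P_i$ to have $G-E(P_i)$ unbalanced: indeed if $G-E(P_i)$ were balanced for every $i$, then every negative cycle of $G$ would share an edge with each $P_i$ --- more usefully, one argues that the set of negative edges, after an appropriate switch making $N$'s witnessing structure transparent, cannot be ``covered'' appropriately. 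The cleaner route: switch so that $G$ has a specified set $S$ of negative edges; a subgraph $H$ is unbalanced iff $H$ contains an odd number of edges of $S$ in some cycle, equivalently $S\cap E(H)$ is not a cut-induced set of $H$. Since $G-E(C)$ is unbalanced but $G-E(C)=G\setminus\triangle_i E(P_i)$, and unbalancedness is detected by a $\mathbb{Z}_2$-linear functional (parity of negative edges on cycles), some $P_i$ must already have $G-E(P_i)$ unbalanced. Take $D=P_i$; it is peripheral and disjoint from the negative cycle certifying unbalancedness, giving a cycle $N'$ disjoint from $D$, as required. (This direction actually only needs the existence of $N$ disjoint from $C$, not from $D$, since $D$ is newly chosen.)

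For part~2 I would refine this: now both $C$ and $N$ are negative. I want a \emph{negative} peripheral cycle $D$ with $G-E(D)$ unbalanced. Run the same argument using $C$: write $E(C)=\triangle_i E(P_i)$. Since $C$ is negative, an odd number of the $P_i$ are negative (parity of negative edges again), so at least one negative peripheral cycle $P_j$ appears. I then need to arrange that $P_j$ can be chosen with $G-E(P_j)$ still unbalanced. Here is where the hypotheses interact: both conditions --- ``$P_i$ negative'' and ``$G-E(P_i)$ unbalanced'' --- are $\mathbb{Z}_2$-linear in the $P_i$, so one analyzes the $2$-dimensional parity vector $(\,\text{sign of }P_i,\ \text{balancedness of }G-E(P_i)\,)$ as $P_i$ ranges over the list; summing over $i$ gives $(\text{sign of }C,\ \text{balancedness of }G-E(C)) = (\text{negative},\ \text{unbalanced})$, i.e.\ the vector $(1,1)$. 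If no single $P_i$ realizes $(1,1)$, then each $P_i$ lies in $\{(0,0),(1,0),(0,1)\}$, and one shows these cannot sum to $(1,1)$ --- wait, they can ($(1,0)+(0,1)=(1,1)$) --- so this naive parity count is insufficient, and I will instead need to actually manipulate the cycle. The fix is to start from $C$ and $N$ simultaneously: among all negative cycles $D'$ with $G-E(D')$ unbalanced, none need exist a priori, so instead I take $C$ and repeatedly replace it by a peripheral cycle in its generating list that keeps the complement unbalanced (possible by part~1's argument applied with roles adjusted), landing at a peripheral $D$ with $G-E(D)$ unbalanced; if this $D$ happens to be negative we are done, and if it is positive I use that $G-E(D)$ unbalanced provides a negative cycle there, reducing to the situation of part~1 but now needing to also track a second negative cycle.

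\textbf{Main obstacle.} The hard part is precisely the simultaneous control in part~2: ensuring the peripheral cycle we extract is \emph{itself} negative \emph{and} has unbalanced complement. The parity argument handles each property alone but they are not independent in the naive linear-algebra sense. I expect the resolution to use $3$-connectivity more seriously --- e.g.\ choosing $C$ and $N$ to begin with some minimality (say $C$ chosen with $|E(C)|$ minimum among negative cycles disjoint from some negative cycle), then showing a failure of peripherality of $C$ yields, via a bridge/chord of $C$, either a shorter such negative cycle or directly the desired $D$ and $N'$ --- mirroring how Claim~\ref{Claim: path P in C} is proved in Theorem~\ref{thm: decomposition into tree and 2-base}. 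That local surgery on bridges of $C$, keeping both negativity and a disjoint negative cycle alive, is where the real work lies.
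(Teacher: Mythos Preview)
Your cycle-space approach for part~1 has a genuine gap. The claim that ``unbalancedness of $G-E(P_i)$'' is a $\mathbb{Z}_2$-linear functional of the $P_i$ is not correct: while the \emph{sign} of a cycle is linear under symmetric difference, the property ``$G-E(X)$ is balanced'' is not a linear function of $X$. Concretely, you note that if every $G-E(P_i)$ is balanced then $N$ meets every $P_i$; but since $N$ is disjoint from $C=\triangle_i E(P_i)$, each edge of $N$ lying in some $P_i$ lies in an \emph{even} number of them, and this is perfectly consistent---for instance with $k=2$ and $N$ meeting $P_1,P_2$ in a common edge of $E(P_1)\cap E(P_2)$. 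So nothing forces a contradiction, and the argument does not close. You implicitly recognize this in part~2 when you compute that $(1,0)+(0,1)=(1,1)$ defeats the parity count; the same failure is already present in part~1.

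The fix is exactly the bridge-surgery idea you gesture at in your last paragraph, and it is what the paper does---uniformly for both parts, not just as a fallback for part~2. The paper's argument is short: among all cycles $C$ disjoint from some negative cycle $N$, choose $C$ to maximize the size of the bridge of $C$ containing $N$. If $C$ had a second bridge $\Gamma'$, then by $3$-connectivity its attachments interlock with those of the bridge containing $N$, and rerouting $C$ through $\Gamma'$ strictly enlarges the bridge containing $N$, a contradiction; so $C$ is peripheral and $G-E(C)$ contains $N$. For part~2 one additionally requires $C$ to be negative and observes that if a rerouting $C'$ fails to be negative then $C\triangle C'$ is, so negativity can always be preserved. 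No minimality on $|E(C)|$ or cycle-space decomposition is needed.
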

\begin{proof}
It is easy to see the `only if' direction is true in both statements. We must show the `if' direction is also true. Let $C,N$ be disjoint cycles in $G$ where $N$ is negative. As before (following Tutte), we call a connected component $B$ of $G-E(C)$ a \emph{bridge} of $C$, and say $V(C) \cap V(B)$ are its \emph{attachment vertices}. 

Choose $C$ to maximize the size (vertices plus edges) of the bridge $\Gamma$ containing $N$. Suppose, for contradiction, that there are at least two bridges of $C$. Because $G$ is 3-connected, there must be some bridge $\Gamma'\neq \Gamma$, and vertices $x,x',y,y'$ which occur in that cyclic order on $C$, so that $x,y$ are attachment vertices of $\Gamma$ and $x',y'$ are attachment vertices of $\Gamma'$. But then rerouting $C$ through $\Gamma'$ increases the size of $\Gamma$, a contradiction. Therefore $C$ has only one bridge, meaning it is a peripheral cycle. Moreover, since $N$ is negative, $G-E(C)$ is unbalanced, and statement 1 is true. To prove statement 2, we follow the same argument, but choose $C$ to be negative. And we observe that we can always do the rerouting so that the rerouted cycle $C'$ is negative. Indeed, if $C'$ is not negative, then $C \triangle C'$ is. Therefore $C$ is a negative peripheral cycle disjoint from $N$, as desired. 
\end{proof}

Let $H_n$ be a connected signed graph formed from the union of a negative cycle of length $n$ and $n$ isolated vertices by adding $n$ edges so that every vertex of the negative cycle has degree $3$, depicted in Figure~\ref{fig: NegativeSun}. By a \emph{negative sun} we mean any member of the family $\{H_n: n\geq 3\}$. These graphs will be required for the proof of the next theorem, as well as in Section \ref{subsect: two disj neg cycles}.  In the following proof we will also use the term \emph{$k$-separation} of a signed graph $G$ to mean a pair ($G_1,G_2$) of subgraphs so that $E(G_1) \cap E(G_2) = \emptyset$, $G_1 \cup G_2 =G$, and $|V(G_1) \cap V(G_2)| = k$. If $V(G_1) \setminus V(G_2) = \emptyset$, then the separation is \emph{trivial}. Finally, note that the following theorem is where we begin to use the notation of \emph{base}, as introduced earlier.

\begin{figure}
    \centering
    \includegraphics[scale = 0.75]{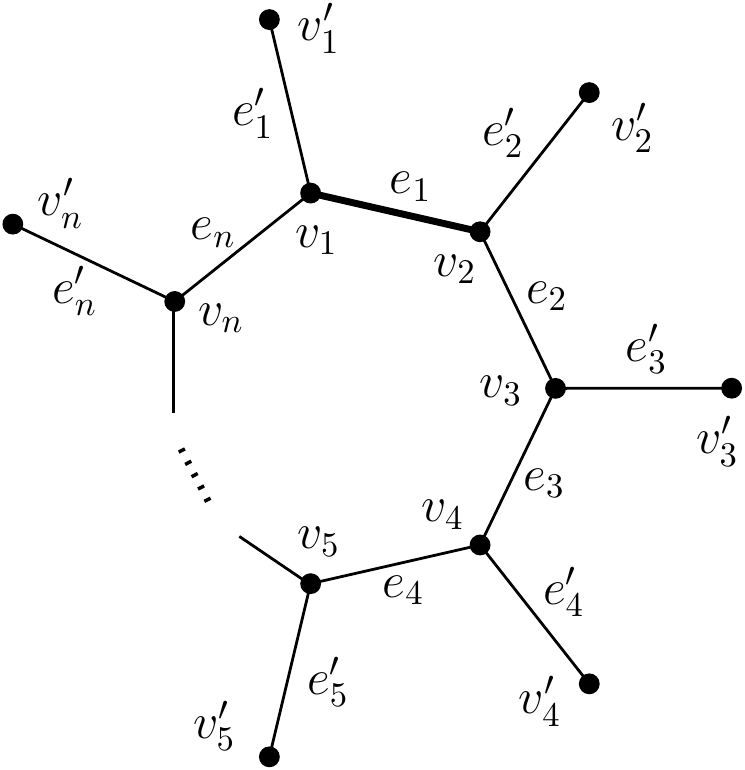}
    \caption{A \emph{negative sun} $H_n$. In this picture, the only negative edge is the bolded edge $e_1$; this particular labelling will be used in Section \ref{subsect: two disj neg cycles}.} 
    \label{fig: NegativeSun}
\end{figure}

\begin{theorem}\label{thm: decomposition to connected base and almost 2-base}
Let $G$ be a 3-connected, cubic signed graph that contains two disjoint negative cylces. Suppose there does not exist $X \subseteq V(G)$ such that $G[X]$ is balanced and
\begin{enumerate}[(i)]
    \item\label{assum: no 3ec separating balanced} $|X| \geq 2$ and $|\delta(X)| = 3$, or
    \item\label{assum: no planar 4-cut} $|X| \geq 3$, $|\delta(X)| = 4$, and $G[X]$ can be embedded in the plane such that all degree-2 vertices are incident to the unbounded face.
\end{enumerate}
Then there exists a partition $X_1\cup X_2$ of $E(G)$ such that $X_1$ is a connected base, $\langle X_2 \rangle_2 = E(G) - F$, where $F$ is the edge-set of a negative sun, $\langle X_2 \rangle_2$ is $2$-connected, and $X_2$ is unbalanced.
\end{theorem}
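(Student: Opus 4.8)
The plan is to mimic the proof of Theorem~\ref{thm: decomposition into tree and 2-base}, but start the construction from a \emph{negative peripheral cycle whose complement is still unbalanced}, and carry the ``negative sun'' along as the uncontractible core instead of a single cycle. Concretely, I would choose a partition (with possibly empty parts) $A\cup B\cup C$ of $E(G)$ satisfying the five properties (a)--(e) from the proof of Theorem~\ref{thm: decomposition into tree and 2-base}, with property (e) strengthened to: $B$ contains (the edge-set of) a negative sun $H_m$ as a subgraph, together with all the ``spoke'' edges so that $\langle B\rangle_2 - (\text{that sun})$ remains controlled, and $B$ is unbalanced. The existence of such an initial partition is exactly where Proposition~\ref{prop: peripheral cycle leaving G unbalanced}(2) is used: since $G$ has two disjoint negative cycles, there is a negative peripheral cycle $D$ with $G-E(D)$ unbalanced; I then let $B$ be a negative sun ``grown'' around a negative cycle sitting inside $G-E(D)$ (the negative cycle $N$ of Proposition~\ref{prop: peripheral cycle leaving G unbalanced}), $C$ be most of the rest, and $A=\emptyset$, checking (a)--(e).

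Then I would run the same minimality argument: choose such a partition with $C$ minimal, and suppose $C\neq\emptyset$. Claim~\ref{Claim: path P in C} applies verbatim (it only uses that $G$ is $3$-edge-connected and the structure of $C$), giving a path $P\subseteq C$ with ends of degree~$1$ in $C$ and at most one nontrivial ``bridge''. I move the end-edges $X$ of $P$ into $A'$, the internal edges $E(P)\setminus X$ into $B'$, and delete $E(P)$ from $C$ to get $C'$. Properties (a), (b), (c), (e) are checked exactly as before. The only delicate point is (d): I need $X\subseteq\langle B'\rangle_2$. As in Theorem~\ref{thm: decomposition into tree and 2-base}, this follows from $2$-connectivity of $\langle B\rangle_2$: I find two vertex-disjoint paths $P_x,P_y$ in $\langle B\rangle_2$ from the negative-sun core to the two ends of $P$, creating a positive cycle through $X$ with all other edges already in $\langle B'\rangle_2$. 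When $C=\emptyset$ I set $X_1\subseteq A$ to be a connected base (a spanning tree if $G$ were balanced, but here $A\cup C$ spanning plus the negative structure in $B$ forces a negative cycle — I take $X_1$ to be a spanning tree of $G$ together with one negative edge, which is possible since the relevant subgraph is unbalanced), and $X_2=E(G)\setminus X_1$, which then has $\langle X_2\rangle_2 = E(G)-F$ with $F$ the edge-set of a negative sun, $\langle X_2\rangle_2$ is $2$-connected (inherited from $\langle B\rangle_2$ being $2$-connected), and $X_2$ is unbalanced (it contains the negative sun, which contains a negative cycle).

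The main obstacle — and the reason the hypotheses (i), (ii) about no balanced $3$-cut and no planar balanced $4$-cut appear — is making sure that the negative sun $B$ can be chosen and maintained so that $\langle B\rangle_2$ is exactly $E(G)$ minus the sun's edges and stays $2$-connected, and in particular that the ``uncontractible'' part $F$ really is a negative sun rather than something larger or disconnected. This is where I expect to need Proposition~\ref{prop: peripheral cycle leaving G unbalanced}(2) to get the initial negative peripheral cycle, and Corollary~\ref{cor:to 2-link for cubic 3conn} (the $2$-linkage corollary) to route the paths $P_x,P_y$ in the crucial step (d) while avoiding the degenerate planar-embedding outcome — the hypotheses (i) and (ii) are precisely designed to rule out that degenerate outcome of the $2$-linkage theorem. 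Verifying that the sun's spokes can always be absorbed into $B'$ without destroying the $2$-closure bookkeeping, and that the final $F$ is a single negative sun $H_n$ for some $n\geq 3$ (using that $G$ is cubic and $3$-connected so every vertex of the core negative cycle gets exactly one spoke), is the part I would write out most carefully; everything else is a routine adaptation of Seymour's/DeVos's argument.
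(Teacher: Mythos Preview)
There is a genuine structural misunderstanding in your plan. You place the negative sun inside $B$ (and hence inside $X_2$) and aim to reduce $C$ to $\emptyset$; but the conclusion $\langle X_2\rangle_2 = E(G)-F$ forces $F\cap X_2=\emptyset$, so your claim that ``$X_2$ is unbalanced (it contains the negative sun)'' is self-contradictory. In the paper's argument the negative sun is precisely the irreducible residue of $C$, and it ends up inside the connected base $X_1$, not inside $X_2$. The unbalancedness of $X_2$ comes from the negative cycle that was placed in $B$ at the outset, not from the sun.

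Concretely, the paper's partition differs from Theorem~\ref{thm: decomposition into tree and 2-base} in that property~(b) is strengthened to ``$C$ is connected and \emph{unbalanced}'', property~(c) to ``$A\cup C$ contains a \emph{connected base}'', and property~(e) to ``$B$ contains a \emph{negative} cycle''. The initial partition is simply $B=E(D)$ for a negative peripheral cycle $D$ with $G-E(D)$ unbalanced (Proposition~\ref{prop: peripheral cycle leaving G unbalanced}(2)), $C=E(G)\setminus E(D)$, $A=\emptyset$; there is no sun built at this stage. The minimality argument does \emph{not} drive $C$ to $\emptyset$; the unbalancedness requirement on $C$ is an obstruction, and the heart of the proof is a sequence of claims showing that a minimal $C$ must actually equal some $H_n$. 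Once $C=H_n$, one takes $X_1$ to be a connected base inside $A\cup C$ containing all of $C$, and $X_2=E(G)\setminus X_1$; then $\langle X_2\rangle_2=A\cup B=E(G)-E(H_n)$. The hypotheses (i) and (ii) and Corollary~\ref{cor:to 2-link for cubic 3conn} are used not to route paths for property~(d) --- that step goes through as in Theorem~\ref{thm: decomposition into tree and 2-base} --- but to rule out $2$-separations of $C$ with a balanced side containing a cycle, which is one of the structural claims needed to pin $C$ down to $H_n$. Your reduction step (moving a path $P$ out of $C$) also needs the extra hypothesis that $C-E(P)$ remains unbalanced, and showing that any larger $C$ admits such a $P$ is where most of the work lies.
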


\begin{proof} 
\newcounter{count} Suppose $G$ is a counterexample to the theorem. We claim there exists (similar to Theorem \ref{thm: decomposition into tree and 2-base}, with differences italicized below) a partition with possibly empty parts $A \cup B \cup C$ of $E(G)$ so that 
\begin{enumerate}[(a)]
    \item\label{property2: AB 2-con} $A \cup B$ is a 2-connected graph;
    \item\label{prop2: C} $C$ is connected, \emph{unbalanced}, and all vertices of $C$ have degree 1 or 3 in $C$;
    \item\label{prop2: connected base} $A \cup C$ contains a \emph{connected base} of $G$;
    \item\label{prop2: 2-base} The 2-closure of $B$ contains $A$.
    \item\label{property2: -cyc in B} $B$ contains a \emph{negative} cycle.
\end{enumerate}
To show such a partition exists, choose for example a negative peripheral cycle $D$ so that $G - E(D)$ is unbalanced by Proposition~\ref{prop: peripheral cycle leaving G unbalanced}. Then $A = \emptyset, B = E(D),$ and $C = E(G) \setminus E(D)$ is a partition satisfying the requirements. It follows that we may choose $A \cup B \cup C$ with $C$ minimal, and so that $C \neq E(G)$.

We proceed with a series of claims.

\setcounter{claim}{0}

\begin{claim}\label{clm: not H_n}
$C \neq H_n$ for any $n \geq 3$.
\end{claim}
\begin{proofc}
Suppose for contradiction $C=H_n$ for some $n\geq 3$. We will show that $G$ satisfies the theorem. Let $X_1$ be a connected base in $A \cup C$, which we know exists by property~\ref{prop2: connected base}. We can choose $X_1$ so that $C \subseteq X_1$ (start with $C$, and add as many edges from $A$ as possible without creating another cycle). Let $X_2 = E(G) \setminus X_1$. Since $B\subseteq X_2$, it follows that $A\subseteq \langle X_2 \rangle_2$ by property \ref{prop2: 2-base}. This implies that $\langle X_2 \rangle_2 \supseteq E(G) \setminus C$, but in fact we get equality in this last containment, because every path between degree one vertices in $C=H_n$ has length at least 3, so the edges in $C$ can't be in the 2-closure. Therefore $\langle X_2 \rangle_2=A\cup B$, and it is $2$-connected by property \ref{property2: AB 2-con}. Finally, $X_2$ is unbalanced by property \ref{property2: -cyc in B}. Thus the partition $X_1\cup X_2$ satisfies the theorem, contradicting that $G$ is a counterexample.
\end{proofc}

\begin{claim}\label{clm: disj cyc and path}
 $C$ does not have a path $P$ between two vertices of degree one such that $C-E(P)$ is unbalanced. 
 \end{claim}
 \begin{proofc}
 Suppose for contradiction that such a $P$ exists. Then there is a negative cycle $N$ in $C - E(P)$. Choose $P$ to maximize the bridge containing $N$ (using again the term bridge from  Claim \ref{Claim: path P in C} in the proof of Theorem \ref{thm: decomposition into tree and 2-base}), and after that prioritize the lexicographic ordering of the sizes of the remaining bridges (as in Claim \ref{Claim: path P in C} of Theorem \ref{thm: decomposition into tree and 2-base}).  As in the proof of that past claim, this means $P$ has exactly one bridge which contains $N$, and is therefore unbalanced. But now we can improve our choice of partition. Let $X$ be the end-edges of $P$. Define $A' = A \cup X$, $B' = B \cup E(P) \setminus X$, and $C' = C \setminus E(P)$. The partition $A' \cup B' \cup C'$ satisfies Properties~\ref{property2: AB 2-con} to \ref{property2: -cyc in B}  using the same argument in the proof of Theorem \ref{thm: decomposition into tree and 2-base}, noting that \ref{prop2: C} and \ref{prop2: connected base} follow because $C - E(P)$ contains $N$. This contradicts that our partition was chosen to have $C$ minimal.
\end{proofc}

\begin{claim} \label{clm: 1-edge-cut}  Every $1$-edge-cut of $C$ has the form $\delta(v)$ for some vertex $v \in V(C)$. 
\end{claim}
\begin{proofc} Suppose for contradiction $\delta(A)$ is a $1$-edge-cut where $|A|$ and $|\overline{A}|$ are at least 2. Since $C$ is unbalanced, there is a negative cycle in, without loss of generality, $G[A]$. By $3$-connectivity of $G$, $|\delta_G(\overline{A})| \geq 3$, and so $C[\overline{A}]$ has at least two vertices of degree one. But then there is a path joining two of those vertices in $C[\overline{A}]$ which is disjoint from a negative cycle in $C[A]$, contradicting Claim \ref{clm: disj cyc and path}. 
\end{proofc}

\begin{claim} \label{clm: No subgraph C'} There does not exist a 2-separation $(G_1,G_2)$ of $C$ so that $G_2$ is balanced and contains a cycle.
\end{claim}
\begin{proofc} Suppose for contradiction that such a 2-separation $(G_1,G_2)$ exists. Let $\{u_1,u_2\} = V(G_1) \cap V(G_2)$. 
We may assume $G_2$ has minimal number of vertices. This means, in particular, that $\deg_{G_2}(u_1) = \deg_{G_2}(u_2) = 2$, and that $G_2$ has no vertex of degree 1 adjacent to $u_1$ or $u_2$.

Since $C$ is unbalanced,
and every $1$-edge-cut in $C$ is trivial (by Claim \ref{clm: 1-edge-cut}),
there exist two disjoint (possibly degenerate) paths from $\{u_1,u_2\}$ to some negative cycle in $C$. Since $G_2$ is balanced, these paths are contained in $G_1$, and therefore any $u_1,u_2$-path in $G_2$ can be combined with some path in $G_1$ to make a negative cycle. This means there cannot exist two disjoint paths $P_1,P_2 \subseteq G_2$ where the ends of $P_1$ are $u_1,u_2$ and both ends of $P_2$ have degree 1 in $G_2$, because this would contradict Claim \ref{clm: disj cyc and path}.

If there are at least three vertices of degree $1$ in $G_2$, then we claim such disjoint paths exist by Corollary~\ref{cor:to 2-link for cubic 3conn}~(\ref{2-link cor2}). Indeed, for $x_1,x_2,y_1,y_2,y_3$ in that corollary, take $x_1,x_2$ to be $u_1,u_2$, and $y_1,y_2,y_3$ to be the neighbors of the degree-1 vertices, noting that each $y_i$ is distinct or we obtain a non-trivial~1-edge-cut in $C$. Hence we can apply Corollary~\ref{cor:to 2-link for cubic 3conn}~(\ref{2-link cor2}) to $G_2$, after deleting its vertices of degree 1,  and the result contradicts  Claim \ref{clm: disj cyc and path}. Therefore $G_2$ has at most two vertices of degree 1.

If there is an edge $e$ with ends $u_1$ and $u_2$ in $E(G_1)$, then $C = G_2 + e$, since $\deg_{G_2}(u_1) = \deg_{G_2}(u_2) = 2$ and $G$ cubic. This means, in particular, that all of the degree 1 vertices of $C$ are in $G_2$. But $G_2$ has at most two such vertices, and deleting them contradicts the $3$-connectivity of $G$. So $u_1u_2 \not\in E(G_1)$. Let $G_2'$ be $G_2$ with vertices of degree-$1$ deleted. Then $G_2'$ is induced in $G$. 

Since $G_2'$ is balanced and has at least two vertices, assumption~\ref{assum: no 3ec separating balanced} says that there cannot be exactly three edges out of $G_2'$ in $G$. However $u_1, u_2$ contribute two such edges, and any vertex of degree one in $G_2$ contributes one. So there must be exactly two vertices of degree-1 in $G_2$. Say their neighbors are $u_3,u_4 \in V(G_2')$.

Now apply Corollary~\ref{cor:to 2-link for cubic 3conn}~(\ref{2-link cor1}) to $G_2'$. Since paths $P_1,P_2$ as above do not exist in $G_2$, we get that $G_2'$ can be embedded in the plane so that $u_1,u_2,u_3,u_4$ are incident to the unbounded face. But this contradicts assumption \ref{assum: no planar 4-cut}, because $G_2$ is balanced, planar, has at least three vertices, and induces a 4-edge-cut in $G$  (with one edge corresponding to each $u_i$).
\end{proofc}

In the following claim we use the term \emph{bond}, which is a minimal non-empty edge-cut (meaning both sides of the cut are connected).

\begin{claim}\label{clm: 2edge bond}
 If $\delta(X)$ is a 2-edge-bond in $C$, then either $C[X]$ or $C[\overline{X}]$ is $K_2$. 
 \end{claim}

 \begin{proofc}
 If the two edges of the bond are adjacent then indeed one side is $K_2$ by Claim \ref{clm: 1-edge-cut}. So we may assume that there are two distinct vertices incident to the bond on each side, and there are paths joining both these pairs in $C - \delta(X)$, the internal vertices of which must have degree three. If both $C[X]$ and $C[\overline{X}]$ are trees, then $C$ is $H_n$ for some $n \geq 3$ (again since $C$ has no non-trivial 1-edge-cuts by Claim~\ref{clm: 1-edge-cut}).  
Hence, by Claim~\ref{clm: not H_n}, $C[X]$ contains a cycle. This means $C[X]$ is unbalanced by Claim \ref{clm: No subgraph C'}. If $C[\overline{X}]$ has two vertices of degree~$1$, then a path between those two vertices in $C[\overline{X}]$ contradicts Claim \ref{clm: disj cyc and path} because $C[X]$ is unbalanced. So it must be that $C[\overline{X}]$ has at most one vertex of degree~$1$. But then by assumption \ref{assum: no 3ec separating balanced}, $C[\overline{X}]$ is unbalanced. However, by 3-connectivity of $G$, there must be at least $3$ vertices of degree~$1$ in $C$. So it must be that $C[X]$ has two vertices of degree 1. But this contradicts Claim \ref{clm: disj cyc and path} because there is a path between those two vertices of degree~$1$ in $C[X]$ which avoids a negative cycle in $C[\overline{X}]$. 
\end{proofc}

With this last claim in place, we proceed to complete the proof. 

By Claims \ref{clm: 1-edge-cut} and \ref{clm: 2edge bond}, $C$ is very close to 3-edge-connected, and we can construct a 3-edge-connected, cubic (and so 3-connected) signed graph $C'$ from $C$ by deleting vertices of degree 1 and suppressing the resulting vertices of degree 2 (with each new edge taking the sign of the associated 2-path). Note that none of the suppressed vertices were adjacent in $C$, otherwise there would be a 2-edge-bond with a path of length 3 on one side and $K_2$ on the other side (by Claim~\ref{clm: 2edge bond}) and so $C=H_3$ (which contradicts Claim~\ref{clm: not H_n}). So if we let $S$ be the set of new edges formed via suppression, there is a natural bijection between $S$ and the vertices of degree~$1$ in $C$. Since $G$ is 3-edge-connected, $C$ has at least three vertices of degree one, and so $|S|\geq 3$.

Since $C$ has a negative cycle, so does $C'$. And we may choose a negative peripheral cycle $D \subseteq C'$ by Corollary \ref{cor: neg peiph cyc}. If $|E(S) \setminus E(D)| \geq 2$, then since $D$ is peripheral there is a path containing two of those $S$-edges which avoids $E(D)$, and contradicts Claim \ref{clm: disj cyc and path}. So there is at most one edge in $S$ that is not in $E(D)$. Because $|S|\geq 3$, this means $|E(D) \cap S| \geq 2$. 

If $C' - E(D)$ has a negative cycle, it is disjoint from a path between two edges in $E(D) \cap S$, which would again contradict Claim \ref{clm: disj cyc and path}. Hence $C' - E(D)$ is balanced. Therefore we may assume all the negative edges in $C'$ are on $D$. If this signature has more than one negative edge, it has at least three because $D$ is negative. But if there are three negative edges on $D$, then there is a path $Q$ on $D$ containing two edges of $S$ which avoids one of the negative edges. And that negative edge makes a negative cycle in $C' - E(Q)$. This contradicts Claim \ref{clm: disj cyc and path}. Hence $C'$ is 1-unbalanced.

Since $C'$ is 1-unbalanced, so is $C$.
Let $e = uv$ be the negative edge in $C$. There is a trivial 2-separation $(C-e,C[\{u,v\}])$ in $C$. Since $C-e$ is balanced, it must be acyclic by Claim~\ref{clm: No subgraph C'}. But now, repeating the same argument at the start of Claim \ref{clm: 2edge bond}, we get that $C=H_n$ for some $n \geq 3$. This final contradiction to Claim~\ref{clm: not H_n} completes our proof.
\end{proof}

The following corollary is a generalization of the decomposition found by Jaeger, Linial, Payan and Tarsi \cite{JaegerFrancois1992Gcog}; it may be of independent interest, with potential applications to other problems on signed graphs. A \emph{degenerate negative sun} is a negative sun $H_n$ where the vertices of degree one may not be distinct. A signed graph is \emph{cyclically $k$-edge-connected} if at least $k$ edges must be removed in order to disconnect it into two components that each contain a cycle.

\begin{corollary}\label{cor: decomp general JLPT}
    Let $G$ be a cyclically 4-edge-connected, cubic signed graph with no positive cycle of length at most 5. Then there exists a partition $X_1\cup X_2$ of $E(G)$ such that $X_1$ contains a connected base and $\langle X_2 \rangle_2 = E(G) - F$, where $F$ is either empty or the edge-set of a degenerate negative sun.
\end{corollary}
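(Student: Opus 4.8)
The plan is to derive Corollary~\ref{cor: decomp general JLPT} from the two decomposition theorems already in hand, Theorem~\ref{thm: decomposition into tree and 2-base} (DeVos) and Theorem~\ref{thm: decomposition to connected base and almost 2-base}, using the hypothesis ``no positive cycle of length at most $5$'' to dispose of the exceptional balanced subgraphs that appear as hypotheses (i) and (ii) of the latter theorem, and using cyclic $4$-edge-connectivity to supply the needed connectivity. First I would reduce to the case that $G$ is $3$-connected. A cubic graph is cyclically $4$-edge-connected exactly when every $3$-edge-cut is the star $\delta(v)$ of a vertex: a $\le 2$-edge-cut, or a $3$-edge-cut with both sides of order at least $3$, would split off a part that in a cubic graph already contains a cycle, and for a $3$-vertex part that cycle is a triangle, which if positive contradicts the hypothesis. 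Hence $G$ is $3$-edge-connected, and (by the remark used in Lemma~\ref{lem: reduce to cubic}) $3$-connected once $|V(G)|\ge 4$; the remaining tiny cases ($|V(G)|\in\{1,2\}$, parallel edges, $K_4$) all produce a short positive cycle or are not cubic, so they do not arise.

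Next I would record the key consequence of the girth hypothesis: if $X\subseteq V(G)$ and $G[X]$ is balanced, then every cycle of $G[X]$ is positive, hence has length at least $6$, so $G[X]$ has girth $\ge 6$. With this I would show that hypothesis~(ii) of Theorem~\ref{thm: decomposition to connected base and almost 2-base} can never hold: such a $G[X]$ is planar with $|X|\ge 4$ even and exactly four degree-$2$ vertices, so $|E(G[X])| = \tfrac32|X|-2$, yet a planar graph of girth $\ge 6$ on $|X|$ vertices has at most $\tfrac32|X|-3$ edges (and a forest has at most $|X|-1$), a contradiction. For hypothesis~(i): a balanced $G[X]$ with $|X|\ge 2$ and $|\delta(X)|=3$ forces $|X|$ odd; $|X|=3$ gives a positive triangle, while for $|X|\ge 5$ the graph $G[X]$ contains a cycle, so by cyclic $4$-edge-connectivity $G[\overline X]$ must be a forest, and the same parity/cycle count on $\overline X$ forces $\overline X=\{v\}$. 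Thus hypothesis~(i) holds only if $G-v$ is balanced for some vertex $v$; re-signing $G-v$ to be all-positive then shows $G$ has a unique negative edge, so $G$ is $1$-unbalanced and in particular has no two vertex-disjoint negative cycles.

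Now split into cases. If $G$ contains two disjoint negative cycles, then by the previous paragraph neither (i) nor (ii) can hold, so Theorem~\ref{thm: decomposition to connected base and almost 2-base} applies directly and yields the partition with $F$ a (genuine, hence in particular degenerate) negative sun, and $X_1$ a connected base — which in particular contains a connected base. If $G$ is balanced, Theorem~\ref{thm: decomposition into tree and 2-base} gives a spanning tree $X_1$ and a $2$-base $X_2$; a spanning tree of a connected balanced signed graph is a connected base, so taking $F=\emptyset$ finishes this case.

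The remaining case — $G$ unbalanced with no two disjoint negative cycles — is the crux, and is where I expect the real work to lie. Here Slilaty's Theorem~\ref{thm: Slilaty} applies to our $3$-connected cubic $G$: the ``negative triangle'' alternative is incompatible with the hypotheses (one checks that a negative triangle, together with cyclic $4$-edge-connectivity and $3$-connectivity, forces either a short positive cycle or a contradiction with cubicity), so $G$ is $1$-unbalanced or projective planar. I would handle this by adapting the decomposition directly. One route is to start from the spanning-tree/$2$-base partition $(X_1,X_2)$ of Theorem~\ref{thm: decomposition into tree and 2-base}, note that some negative cycle avoids $X_1$, write it as a symmetric difference of fundamental cycles to find $f\in X_2$ with negative fundamental cycle $C_f$, so that $X_1\cup\{f\}$ is a connected base, and then control $\langle X_2\setminus\{f\}\rangle_2$. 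An alternative, parallel to the proof of Theorem~\ref{thm: decomposition to connected base and almost 2-base}, is to begin with a negative peripheral cycle (Corollary~\ref{cor: neg peiph cyc}) as the rim of $F$ and run the same partition-refinement argument; the difference is that without two disjoint negative cycles the residual part collapses to a negative sun whose pendant vertices need \emph{not} be distinct — which is exactly why ``degenerate'' cannot be dropped from the statement, and also why $F$ cannot simply be absorbed back into the $2$-closure. The main obstacle I foresee is precisely this last point: verifying that, after the adjustment forcing a negative cycle into $X_1$, the set of edges leaving the $2$-closure is always either empty or the edge set of a degenerate negative sun, rather than an uncontrolled set; everything preceding it is bookkeeping with the two decomposition theorems and the Euler/girth estimates above.
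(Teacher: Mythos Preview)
Your setup and the two ``easy'' cases are correct and match the paper: the girth bound combined with Euler's formula rules out hypothesis~(ii), cyclic $4$-edge-connectivity reduces hypothesis~(i) to $\overline X=\{v\}$ and hence to $G$ being at most $1$-unbalanced, so Theorem~\ref{thm: decomposition to connected base and almost 2-base} applies directly whenever two disjoint negative cycles exist; and Theorem~\ref{thm: decomposition into tree and 2-base} handles balanced $G$ with $F=\emptyset$.

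The genuine gap is in the crux case, and none of your three suggested routes closes it. Slilaty's theorem classifies the structure but does not produce the decomposition. The ``move one edge $f$ from $X_2$ into $X_1$'' idea founders because deleting a single edge from a $2$-base can collapse the $2$-closure in an uncontrolled way---there is no reason the edges falling out of $\langle X_2\setminus\{f\}\rangle_2$ should organise themselves into a negative sun. And if you start the partition refinement of Theorem~\ref{thm: decomposition to connected base and almost 2-base} with a \emph{negative} peripheral cycle $D$ placed in $B$, you need $C=E(G)\setminus E(D)$ to be unbalanced (property~(b)); but Proposition~\ref{prop: peripheral cycle leaving G unbalanced}(2) says this is \emph{equivalent} to having two disjoint negative cycles, which is precisely what you lack here.

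What the paper does instead is split the crux case once more, according to whether $G$ has two disjoint cycles of \emph{any} sign. If it does not, then for a negative peripheral cycle $D$ the complement $G-E(D)$ is acyclic, and one argues directly (using $3$-connectivity and the absence of two disjoint cycles) that every vertex off $D$ is already adjacent to $D$; then $X_1$ consisting of $E(D)$ together with one pendant edge per off-$D$ vertex is simultaneously a connected base and a degenerate negative sun $F$, with $X_2$ the remaining edges. If $G$ \emph{does} have two disjoint cycles but no two disjoint negative ones, Proposition~\ref{prop: peripheral cycle leaving G unbalanced}(1) supplies a peripheral cycle $D$---necessarily positive in this situation---with $G-E(D)$ unbalanced. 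One then reruns the entire partition-refinement proof of Theorem~\ref{thm: decomposition to connected base and almost 2-base} starting from $B=E(D)$, but \emph{dropping property~(e)}. All the claims go through verbatim: outside the eventual sun $F$ the graph is balanced, so the signed $2$-closure there coincides with the unsigned one and property~(e) is never needed. The two ideas your outline is missing are this further case split and, in the second sub-case, the use of a \emph{positive} peripheral starting cycle together with the observation that property~(e) is dispensable once everything outside the sun is balanced.
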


\begin{proof} If $G$ is balanced, then the corollary follows from Theorem \ref{thm: decomposition into tree and 2-base}, where $F = \emptyset$. So we may assume $G$ is unbalanced.

    If $G$ has no two disjoint cycles, then we can choose a negative peripheral cycle $D$ (by Corollary~\ref{cor: neg peiph cyc}), and we'll get that $G-E(D)$ is acyclic. Let $S = V(G)\setminus V(D)$. Suppose for contradiction that there is a vertex $v \in S$ so that $N_G(v) \subseteq S$. Since $G$ is 2-connected each $w\in N_G(v)$ has two paths to $D$, all of which are disjoint (except for $w$) because $G$ is cubic and $G-E(D)$ is acyclic. But now it is straightforward to verify (with some cases) that these paths together with $D$, $v$, and $\delta_G(v)$ contain a negative cycle disjoint from some other cycle, a contradiction. 
    Thus every $v \in S$ is adjacent to some vertex on $D$, and our desired partition is as follows: let $X_1$ be $E(D)$ plus one edge connecting each vertex in $S$ to $D$, and $X_2$ be the remaining edges. So we may assume $G$ has two disjoint cycles.
    
    If $G$ has two disjoint negative cycles, then the corollary follows from Theorem \ref{thm: decomposition to connected base and almost 2-base}: (i) follows from cyclic 4-edge-connectivity and (ii) follows from Euler's formula because any balanced $2$-connected subcubic planar graph with at most four vertices of degree two has a positive cycle of length at most five and $G$ has no such cycle. So we may assume $G$ has no two disjoint negative cycles.

    Finally, it must be that $G$ is unblanaced with two disjoint cycles, but no two disjoint negative cycles. 
    Now we claim that the corollary follows from the proof of Theorem \ref{thm: decomposition to connected base and almost 2-base}. Specifically, we claim that the conclusion of Theorem \ref{thm: decomposition to connected base and almost 2-base}, with the last part (that $X_2$ is unbalanced) removed, is true for our $G$. The proof is almost exactly the same, with the following important difference: we ignore property (e) in the proof of that theorem. By Proposition \ref{prop: peripheral cycle leaving G unbalanced} (1), $G$ has a positive peripheral cycle $D$ so that $G-V(D)$ is unbalanced. The partition $A \cup B \cup C$ where $A = \emptyset$, $B = E(D)$, and $C = E(G)\setminus E(D)$ is a partition satisfying the requirements of items (a) to (d). Apart from this, the proof proceeds as written. This works because, apart from the edges of the negative sun we will find, the remainder of the graph $G$ is balanced. Specifically, this means the $2$-closure of $B$ is equal to its $2$-closure as an unsigned graph.     
\end{proof}

\section{Proof of Theorem \ref{main}}\label{sect:proof of main}

Let $G$ be a $3$-edge-connected, 2-unbalanced signed graph. In order to prove Theorem~\ref{main} we must show that $G$ is $A$-connected for every abelian group $A$ with $|A|\geq 6$ and $|A|\neq 7$. Suppose that $G$ is a minimum counterexample in the sense of Lemma~\ref{lem:reduction} (ie. with respect to $\sum_{v \in V(G)}|\deg(v)-3|$, and subject to that with respect to $|V(G)|$). Then, in particular, $G$ is cubic and 3-connected. This means we can apply Theorem \ref{thm: decomposition into tree and 2-base}, which decomposes the graph into a spanning tree and a $2$-base; in Section 6.1 we will how to leverage this decomposition to show $A$-connectivity when $|A|$ is composite.

Recall that earlier in this paper, in Theorem \ref{thm: no 2 disj negative cycles}, we showed that a minimum counterexample to Theorem \ref{main} must contain two disjoint negative cycles. With this additional assumption, we can apply Theorem \ref{thm: decomposition to connected base and almost 2-base}, which decomposes a graph into a connected base, containing a negative sun, and a set of edges that almost form a $2$-base (missing exactly the edges of the negative sun). In Section 6.2 we will use this to complete our proof of Theorem \ref{main} in the remaining case, namely when  $|A|$ is prime, $|A|\geq 11$, and $G$ does have two disjoint negative cycles.

Note that in what follows, flow-values should be assumed to be zero on edges where they are not defined.

\subsection{The case for abelian groups of composite order.}\label{subsect:composite}

\begin{theorem}\label{thm: Z2 x Z3}
Let $G$ be a $3$-edge-connected, 2-unbalanced signed graph, and let $A$ be an abelian group with $|A|\geq 6$. Suppose $G$ is not $A$-connected and suppose that $G$ is chosen to be minimum with respect to $\sum_{v \in V(G)}|\deg(v)-3|$, and subject to that with respect to $|V(G)|$. Then $|A|$ must be prime.
\end{theorem}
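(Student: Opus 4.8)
The plan is to show that if $|A|$ is composite, then $G$ is $A$-connected, contradicting the choice of $G$. Write $|A| = n = n_1 n_2$ with $n_1, n_2 \geq 2$. The key idea is to use the structure of a finite abelian group: while $A$ need not split as $\mathbb{Z}_{n_1} \times \mathbb{Z}_{n_2}$, any finite abelian group $A$ of order $n_1 n_2$ with $\gcd(n_1,n_2)=1$ does split as $A_1 \times A_2$ with $|A_i| = n_i$; and more generally, for \emph{any} composite $n$, $A$ has a subgroup $H$ with $1 < |H| < n$, giving a short exact sequence $0 \to H \to A \to A/H \to 0$. The strategy will be to satisfy a given $A$-boundary $\beta$ in two stages: first handle the ``$A/H$-part'' of $\beta$ using a nowhere-zero-type flow construction on one piece of a decomposition, then correct the ``$H$-part'' on the complementary piece, using that the complement is rich enough (contains a $2$-base, or a spanning tree plus extra structure) to realize any $H$-boundary with nonzero $H$-values while the already-assigned $A/H$-lift keeps things nonzero in $A$.

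Concretely, since $G$ is cubic and $3$-connected (Lemma~\ref{lem: reduce to cubic}), apply Theorem~\ref{thm: decomposition into tree and 2-base} to get a partition $E(G) = X_1 \cup X_2$ where $X_1$ is a spanning tree and $X_2$ is a $2$-base. First I would fix the subgroup $H \leq A$, let $q : A \to A/H$ be the quotient, and let $\overline{\beta} = q \circ \beta$, which is an $A/H$-boundary of $G$ since the defining sum condition ($\sum_v \beta(v) = 2a$) is preserved by the homomorphism $q$. The first step is to satisfy $\overline{\beta}$ on the subgraph built from $X_2$: because $X_2$ is a $2$-base and $|A/H| = n_1$ (say) while every positive cycle used in the closure process has the property that we can route flow avoiding the single forbidden value $0$ (compare the argument in Lemma~\ref{lem:balanced at most 5 vert of degree 2 is A con} and in the proof of Theorem~\ref{thm: no 2 disj negative cycles}), one shows $\langle X_2\rangle_2 = E(G)$ lets us push any $A/H$-boundary through; equivalently one uses the standard fact that a $2$-base supports group connectivity over groups of the relevant size. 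Then lift this $A/H$-flow $g$ to an arbitrary set-theoretic lift $\tilde g : E(G) \to A$ with $q(\tilde g(e)) = g(e)$, so that $\tilde g(e) \notin H$ for $e \in X_2$ where $g(e) \neq 0$ (one must be slightly careful on edges where $g(e) = 0$, which is where $X_1$ comes in).

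The second step is a correction over $H$: the residual boundary $\beta' = \beta - \partial \tilde g$ now satisfies $q \circ \beta' = 0$, i.e.\ $\beta'$ takes values in $H$, and $\sum_v \beta'(v) = 2h$ for some $h \in H$, so $\beta'$ is an $H$-boundary. Since $X_1$ is a spanning tree, I would realize $\beta'$ by a (necessarily unique, up to the single independent cycle if $G$ is unbalanced) $H$-valued function supported appropriately, and then \emph{distribute} it: the point is that along the tree $X_1$ plus a controlled amount of $X_2$ we can add an $H$-valued correction that keeps every edge's total value in $A \setminus \{0\}$ — on $X_2$-edges where $\tilde g$ is already outside $H$, adding an element of $H$ keeps us outside $\{0\}$; on the tree edges and the few $X_2$-edges where $\tilde g \in H$, we argue that $|H| \geq 2$ (and, when needed, $|A/H|$ large enough that $X_2$ alone can absorb the $A/H$-part away from those edges) gives room to choose nonzero values. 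Summing $\tilde g$ with this correction gives a nowhere-zero $A$-valued function $f$ with $\partial f = \beta$, so $G$ is $A$-connected. Throughout, the cases $n = 4, 6, 8, 9, \ldots$ are uniform once we observe $|A| \geq 6$ composite forces a subgroup $H$ with both $|H| \geq 2$ and $|A/H| \geq 3$ — and in fact $|A/H| \geq 3$ can be arranged except possibly when $A \cong \mathbb{Z}_2 \times \mathbb{Z}_2$, which is excluded by $|A| \geq 6$.

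The main obstacle I expect is the bookkeeping on the edges where the $A/H$-flow $g$ vanishes: on those edges $\tilde g \in H$, and the $H$-correction must be chosen \emph{nonzero there} while still satisfying the $H$-boundary $\beta'$ globally. This is exactly the kind of ``avoid one value per edge'' problem that group connectivity is designed for, so the clean way to handle it is to not lift naively but instead to run the two stages as genuine group-connectivity arguments: first show the quotient graph obtained by contracting $\langle X_2\rangle_2$ (which, as in Theorem~\ref{thm:flow-contract configs}, reduces to a small graph) is $(A/H)$-connected, and separately that the $X_1$-side supports $H$-connectivity relative to the forbidden set induced by $\tilde g$; then glue via the exact sequence. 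Making the ``relative to the forbidden set'' step precise — essentially a version of Proposition~\ref{equivAcon} carried out one group-layer at a time — is where the real work lies, and it is likely cleanest to phrase the whole argument using Proposition~\ref{equivAcon}(2) with a carefully chosen $\overline f$ whose $q$-image and $H$-part are handled on $X_2$ and $X_1$ respectively, invoking Theorem~\ref{thm: decomposition into tree and 2-base} to guarantee both pieces are large enough.
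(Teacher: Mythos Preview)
Your overall strategy --- pick a proper nontrivial subgroup $H \le A$, use the spanning-tree/2-base decomposition of Theorem~\ref{thm: decomposition into tree and 2-base}, and handle the $A/H$-layer and the $H$-layer on the two pieces separately --- is exactly the paper's strategy. But two things in your execution are off.

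First, the roles of $X_1$ and $X_2$ are muddled. You say you will use the 2-base structure of $X_2$ (the cycles $C_i$ arising in the 2-closure) to make the $A/H$-flow nonzero on $X_2$. But those cycles are precisely what lets you control the \emph{tree} edges: the sets $W_i = E(C_i)\setminus(\bigcup_{j<i}C_j\cup X_2)$ lie in $X_1$, and processing $C_t,\dots,C_1$ in reverse lets you \emph{set} the final value on each $W_i$, while the values on $X_2$-edges end up uncontrolled. So the 2-base machinery fixes tree edges two at a time (hence $|A/H|\ge 3$ is needed), not $X_2$-edges. Conversely, fundamental cycles with the tree let you fix each $X_2$-edge one at a time (so $|H|\ge 2$ suffices). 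The paper runs exactly this: choose $N\le A$ with $|N|$ minimal so that $|A/N|\ge 3$; build $\phi_1$ via the 2-base cycles to force $\phi_1(e)\notin \overline f(e)+N$ for every $e\in T$; then build an $N$-valued $\phi_2$ via fundamental cycles to fix $\overline f$ on $B$; finally $\phi=\phi_1'+\phi_2$ avoids $\overline f$ everywhere. Your line about ``contracting $\langle X_2\rangle_2$'' also collapses: by definition of 2-base, $\langle X_2\rangle_2=E(G)$, so that contraction is a single vertex and carries no information.

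Second, and more seriously, your sketch does not address the signed-graph complication at all. When you try to fix an edge $e\in B$ using its fundamental cycle $C_e$ with $T$, that cycle may be \emph{negative}. If the relevant group layer has odd order, a negative cycle admits no nontrivial flow, so you cannot simply ``choose a flow on $C_e$''. The paper splits here: when $|A|$ is even it takes $|N|=2$, and then negative cycles do support the required $N$-flow (the element of order~2 works); when $|A|$ is odd it takes $|N|\ge 3$, adjoins one $B$-edge $b'$ to $T$ to form a connected base $T'$, and then for each $e\in B\setminus\{b,b'\}$ uses a positive cycle or a \emph{barbell} in $T'\cup\{e\}$, finishing $b,b'$ together with a single barbell through both. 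None of this barbell/connected-base mechanism appears in your plan, and without it the odd-order case (e.g.\ $|A|=9$) does not go through.

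In short: keep the two-layer idea, but (i) swap which part of the decomposition handles which layer (2-base cycles fix tree edges over $A/N$ with $|A/N|\ge 3$; fundamental cycles fix $B$-edges over $N$), (ii) phrase it via Proposition~\ref{equivAcon}(2) from the start rather than lifting boundaries, and (iii) add the connected-base/barbell argument to handle negative fundamental cycles when $|A|$ is odd.
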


\begin{proof}[Proof of Theorem \ref{thm: Z2 x Z3}]
We suppose that $|A|$ is composite and will show that $G$ is $A$-connected. By Lemma \ref{lem:reduction}, $G$ is cubic and 3-connected, and therefore by Theorem \ref{thm: decomposition into tree and 2-base} has an edge-partition $T \cup B$ where $T$ is a spanning tree and $B$ is a 2-base.

Let $\overline{f}: E(G) \rightarrow A$ be a set of forbidden edge values, corresponding to some initial orientation of $G$. We will use the partition $T \cup B$ to find a flow $\phi: E(G) \rightarrow A$ so that $\phi(e) \neq \overline{f}(e)$ for all $e \in E(G)$ (with either the same orientation or with $\overline{f}$ appropriately changed). By Proposition \ref{equivAcon}, this implies that $G$ is $A$-connected.

Recall that the converse of Lagrange's Theorem is true for abelian groups, i.e. for every divisor $d$ of $|A|$ there is a subgroup of $A$ of order $d$. Since $|A|$ is composite there exists a proper, nontrivial subgroup $N$ and quotient group $A/N$; choose $|N|$ as small as possible. Since $|A|\geq 6$, this means $|A/N|\geq 3$. (In particular, if $A=\mathbb{Z}_{2k}$, we will choose $N=\{0, k\}$ and $A/N=\{ \{0,k\}, \{1,k+1\}, \ldots, \{k-1, 2k-1\} \}=\{0+N, 1+N, \ldots, k-1+N\}$).

The first of two main steps in this proof is to construct a flow $\phi_1: E(G) \rightarrow A/N$ with the property that $\phi_1(e)\not\in \overline{f}(e)+N$ for all $e\in T$. (For example, with the $A/N$ mentioned above, if $\overline{f}(e)=3$, then we want $\phi_1(e)\not\in \{3, 3+k\}$). To this end, since $B$ is a 2-base, there is a list of positive cycles $C_1, \dots, C_t$ with the following two properties:
\begin{enumerate}
    \item for each $1\le i \le t$, $|W_i := C_i \setminus (\cup_{j=1}^{i-1} C_j \cup B)| \le 2$; and
    \item $T \subseteq \cup_{i=1}^t W_i$.
\end{enumerate}

Working from $C_t$ to $C_1$, we will choose a sequence of flows $f_i: E(C_i) \rightarrow A/N$ for all $t\geq i\geq 1$, so that $\phi_1 := \sum_{i=1}^t f_i$  has $\phi_1(e) \not\in \overline{f}(e) + N$ for all $e \in T$. When choosing the flow $f_i$, note that $W_i \cap E(C_j) = \emptyset$ for any $j < i$. And so in choosing the flow $f_i$, we are setting the value $\phi_1(e)$ for each $e \in W_i$. 
Since $|A/N| \geq 3$, there are least $3$ choices for $f_i$, and because $|W_i| \le 2$ there are at most two values of flow $f_i$ which would cause $\phi_1(e) \in \overline{f}(e) + N$ for some $e \in W_i$. We can therefore choose $f_i$ so that $\phi_1(e) \not\in \overline{f}(e) + N$ for every $e \in W_i$. After defining all of $f_t, \ldots, f_1$, we see that $\phi_1$ is as desired, since $T\cap B=\emptyset$. From $\phi_1$, obtain $\phi_1': E(G) \rightarrow A$ by setting $\phi_1'(e)$ to be the minimal element in the coset of $\phi_1(e)$.

The second main step in this proof is to construct a flow $\phi_2: E(G) \rightarrow N$ so that $\phi_2(e) \neq \overline{f}(e) - \phi'_1(e)$ for all $e \in B$. We consider two cases based on the parity of $|A|$.

Recall that if $|A|$ is even, then we chose $|N|= 2$. Here, for each $e \in B$ let $C_e$ be the fundamental cycle of $e$ with $T$. We want to define $f_e: E(C_e) \rightarrow N$, and if $C_e$ is a positive cycle it's clear there are exactly two choices for this definition, one per element in $N$. However since $|N|=2$, we also get these two choices even if $C_e$ is a negative cycle. So, in either case, we can define $f_e$ so that $f_e(e) \neq \overline{f}(e) - \phi'_1(e)$. Then by setting $\phi_2 := \sum_{e \in B} f_e$, we get our desired result.

If $|A|$ is odd, then $|N| \geq 3$. Because $G$ is 2-unbalanced there exist $b, b' \in B$ so that the fundamental cycles of both $b$ and $b'$ with $T$ are negative. (Because $T$ is balanced, we may choose a signature of $G$ so that every edge in $T$ is positive.) Let $T' = T \cup \{b'\}$, which is a connected base. For each $e \in B \setminus \{b,b'\}$, there exists $D_e \subseteq T' \cup \{e\}$ such that $D_e$ contains $e$ and $D_e$ is either a positive cycle or a barbell. We can choose a flow $g_e: D_e \rightarrow N$ so that $g_e(e) \neq \overline{f}(e) - \phi'_1(e)$, because $|N|\geq 3$ and there is only one value on $e$ we need to avoid. Then $\phi_2':=\sum_{e\in B \setminus \{b,b'\}} g_e$ has the property we want for $\phi_2$, except possibly on the edges $b, b'$. To fix these two edges, note that there exists $D_b\subseteq T' \cup \{b\}$ so that $D_b$  contains both of $b,b'$, so that $D_b$ is either a positive cycle or a barbell, and so that if $D_b$ is a barbell, then neither of $b,b'$ is in the path of that barbell. Because  $|N| \geq 3$, and we need only fix two edges, this means there is is a flow $g_{b}:D_{b} \rightarrow N$ that fixes both $b$ and $b'$ simultaneously. That is, we achieve $g_{b}(b) \neq \overline{f}(b) - \phi'_1(b)$ (note that since $b\not\in T'$ it was not given a value other than zero by $\phi_2'$) and $g_{b}(b') \neq \overline{f}(b') - \phi'_2(b') - \phi'_1(b')$. Finally, we set $\phi_2 := \phi_2' + g_b$.

But now our theorem is satisfied by the $A$-flow $\phi := \phi_1' + \phi_2$. To see this, note that every $e\in B$ has $\phi(e) \neq \overline{f}(e)$ by the condition on $\phi_2$. But the same follows for all $e \in T$ by the condition on $\phi_1$ because $\phi_2(e) \in N$.
\end{proof}

\subsection{The case for groups of prime order at least 11, when $G$ has two disjoint negative cycles.}\label{subsect: two disj neg cycles}

The goal of this subsection is to complete the proof of  Theorem~\ref{main}.
Before doing so, we require the following result of Cheng, Lu, Luo, and Zhang, which converts a $\Z_2$-flow to an integer 3-flow.

\begin{theorem}[Cheng, Lu, Luo, Zhang \cite{ChengJian2018SGFM})] \label{lemma2to3} Let $G$ be a connected signed graph. If $G$ has a $\mathbb{Z}_2$-flow $f_1$ such that $\{e|f_1(e)\neq 0\}$ contains an even number of negative edges, then $G$ also has an integer-valued 3-flow $f_2$ with $f_2(e)\in\{1, -1\}$ for all $e$ with $f_1(e)\neq 0$. 
\end{theorem}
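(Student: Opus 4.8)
The statement to prove is Theorem~\ref{lemma2to3} (Cheng--Lu--Luo--Zhang): given a connected signed graph $G$ with a $\Z_2$-flow $f_1$ whose support contains an even number of negative edges, produce an integer $3$-flow $f_2$ with $f_2(e) \in \{1,-1\}$ exactly on the support of $f_1$. Since this is a cited result from the literature, I will only sketch the natural approach. The key object is the support subgraph $H = G[\{e : f_1(e)\neq 0\}]$. A $\Z_2$-flow condition at each vertex says every vertex has even degree in $H$ \emph{when counted appropriately for signs}; more precisely, $\partial f_1(v) = 0$ over $\Z_2$ means the number of half-edges at $v$ lying in the support is even, so $H$ is an even subgraph of $G$ in the bidirected sense. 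The plan is to decompose $H$ into a sum of ``flow-carrying'' pieces --- positive cycles, negative cycles, and barbells --- using the structural fact (mentioned in Section~2 of this excerpt) that every integer/group flow on a signed graph is a sum of flows on positive cycles, barbells, and (when we are allowed value $\pm1$, hence essentially over $\Z_2$) negative cycles. The parity hypothesis is exactly what lets us pair up the negative cycles.

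\textbf{Key steps, in order.} First, I would reduce to the support: it suffices to build an integer $3$-flow on $H$ that is nowhere zero on $E(H)$ with all values in $\{1,-1\}$, and then extend by zero to $G$ (flows extend by zero across edges outside the support, as the paper notes). Second, decompose the $\Z_2$-flow $f_1$ on $H$ into a sum $f_1 = \sum_i \mathbf{1}_{S_i}$ (mod $2$) where each $S_i$ is the edge set of a positive cycle, a negative cycle, or a barbell; this is possible because $H$ (as a subgraph with the induced bidirected structure) supports a $\Z_2$-flow and such flows decompose into these elementary supports. Third --- the crucial combinatorial step --- use the evenness of the number of negative edges in $\mathrm{supp}(f_1)$ to argue that the negative-cycle pieces in the decomposition can be \emph{paired}: each unpaired negative cycle would contribute an odd number of negative edges, so an even total forces the negative cycles to be groupable into pairs, each pair being joined (since $G$, or at least the relevant component, is connected) by a path into a \emph{barbell}. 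Fourth, assign integer flows: put value $1$ (in a consistent direction) on each positive cycle, and on each barbell put value $1$ on its two negative cycles and value $2$ on the connecting path, which is a legitimate integer flow taking values in $\{1,2\}\subseteq\{-2,-1,0,1,2\}$; note $|f_2(e)| < 3$ throughout. Fifth, sum these elementary integer flows over $\Z$. One must check two things: (a) every edge of $H$ receives a \emph{nonzero} value, and (b) every value lies in $\{1,-1\}$. Point (b) is where care is needed --- naive summation can produce $0$ or $\pm 2$ on overlapping edges --- so the honest version either chooses the decomposition to be edge-disjoint (an ear-type / Eulerian decomposition of $H$ into edge-disjoint cycles and barbells, which is available precisely because $H$ carries a $\Z_2$-flow), or works modulo the comparison with $f_1$: since $f_2 \equiv f_1 \pmod 2$ on the constructed flow, $f_2(e)$ is odd exactly when $f_1(e) = 1$, so $f_2(e) \neq 0$ on the support; then one argues $|f_2(e)|$ can be taken less than $3$, i.e. equal to $1$, by choosing the path-values and orientations to cancel rather than reinforce (this is the standard ``$3$-flow from $\Z_2$-flow'' massaging).

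\textbf{Main obstacle.} The hard part is precisely controlling the integer values so that they land in $\{-1,1\}$ on the support and $0$ off it, rather than merely in $\{-2,-1,0,1,2\}$. Over $\Z_3$ this is automatic (a nowhere-zero $\Z_3$-flow agreeing with $f_1$ mod $2$ would finish it immediately, but we want an \emph{integer} $3$-flow with the sharper value restriction), so the real content is an orientation/cancellation argument ensuring no edge of the support gets total $\pm 2$. The cleanest route is to first produce \emph{some} integer flow $f_2'$ with $f_2' \equiv f_1 \pmod 2$ and support contained in $\mathrm{supp}(f_1)$ (using the barbell-pairing so that the two negative cycles of each barbell are consistently handled and the join-path carries an even value $2$), and then repeatedly reduce any edge with $|f_2'(e)| \geq 2$ by adding an elementary $\{1,-1\}$-valued flow on a suitable positive cycle or barbell through that edge within $H$ --- the evenness hypothesis guarantees such corrective structures exist inside the support. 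Verifying that this reduction terminates with all values in $\{-1,1\}$, without disturbing the parity or creating zeros on the support, is the only genuinely delicate bookkeeping, and is what Cheng--Lu--Luo--Zhang carry out in detail.
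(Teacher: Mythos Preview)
The paper does not prove this statement: Theorem~\ref{lemma2to3} is quoted from Cheng, Lu, Luo, and Zhang \cite{ChengJian2018SGFM} and used as a black box in Section~\ref{subsect: two disj neg cycles}. There is therefore no proof in the paper against which to compare your proposal.

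That said, a brief remark on your sketch. Your reading of the statement is slightly too strong: the theorem only requires $f_2(e)\in\{1,-1\}$ on the support of $f_1$; it places no constraint (beyond $|f_2(e)|<3$) on edges outside the support. This matters, because it means the connecting paths of your barbells are allowed to carry value $\pm 2$ and need not lie inside $H$. With that relaxation, the outline you give --- decompose the even subgraph $H$ into edge-disjoint cycles, note that the parity hypothesis forces an even number of negative cycles, pair them via paths in $G$ into barbells, and sum the resulting elementary integer flows --- is essentially the right shape. The residual difficulty is not getting support edges down from $\pm 2$ to $\pm 1$ (the edge-disjointness of the cycle decomposition already handles that), but rather controlling overlaps among the \emph{connecting paths} so that no edge accumulates absolute value $\geq 3$. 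That bookkeeping is indeed what the cited paper supplies.
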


We also require the following technical lemma, which shows that we can always put a certain flow on a negative sun. Since our remaining case concerns groups of prime $p$ order, and all such groups are isomorphic to the cyclic group $\Z_p$, it suffices to focus on  $\Z_p$ specifically.

\begin{lemma}\label{lem:flow star on $H$}
Let $G$ be a signed graph with a negative sun $H \subseteq G$ with negative cycle $C\subseteq H$. Suppose $G-V(C)$ is $2$-connected, and there is a negative cycle $N \subseteq G$ so that $H$ and $N$ are edge-disjoint. 
Then for any prime $p \ge 11$, and any $\overline{f}: E(H)\rightarrow \Z_p$ there is a flow $f: E(G) \to \Z_p$ so that 
\begin{enumerate}[(i)]
    \item $f(e') \neq \overline{f}(e')$ for some $e' \in E(H)$, and
    \item $f(e) \notin Y(e)$ for all $e \in E(H) \setminus \{e'\}$, where $Y(e) := \{\overline{f}(e), \overline{f}(e) \pm 3, \overline{f}(e) \pm6\}$.
\end{enumerate}
\end{lemma}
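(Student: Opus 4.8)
The plan is to set up a system of flows on the negative sun $H$ that handles the $|Y(e)|$ forbidden values on each edge, using the structure of $H$ to find enough independent "flow-adjusting" moves, and to use the extra structure of $G$ (the negative cycle $N$ disjoint from $H$, and the $2$-connectivity of $G-V(C)$) to actually realize these moves as honest $\Z_p$-flows on $G$.

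\textbf{Setting up.} Write $H=H_n$ with negative cycle $C=v_1v_2\cdots v_nv_1$, where by convention the single negative edge is $e_1=v_1v_2$ (after switching, if needed), and let $d_i$ be the pendant edge at $v_i$ whose other end is a degree-one vertex of $H$. The cycle edges are $e_1,\dots,e_n$. First observe that $C$ is a negative cycle and $N$ is a disjoint negative cycle, so $C$ and $N$ together with a path between them form a barbell; since $G-V(C)$ is $2$-connected, we can route flow from any pair of vertices of $C$ out into $G-V(C)$, around $N$, and back. Concretely, the basic building blocks of flow we have available are: (1) a ``cycle flow'' of value $a\in\Z_p$ around any positive cycle of $G$; (2) a ``barbell flow'' that sends value $2a$ along a path joining $C$ to $N$ (a flow of value $a$ on each of $C$ and $N$, and $2a$ on the path), whose net effect on $E(H)$ is a flow of value $a$ around $C$; and (3) combinations that put flow on a single pendant edge $d_i$ by routing through $G-V(C)$. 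The key point is that since $G-V(C)$ is $2$-connected and contains $N$, for each $v_i$ there is a way to push an arbitrary value $a$ onto $d_i$, absorbed entirely inside $G-V(C)$ except for the contribution at $v_i$, which is then balanced by $\pm 2a$ around $C$ (because the imbalance created at $v_i$ equals $\pm a$, and $C$ being negative lets us correct single-vertex imbalances of any size once we use the $N$-end of a barbell — or, more simply, by a second pendant/barbell route). So the ``degrees of freedom'' we control are: a value around $C$, and a value on each $d_i$, with one linear constraint coming from conservation.

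\textbf{Choosing the flow.} With these building blocks, I would argue as follows. For the pendant edges $d_1,\dots,d_n$: each $d_i$ has a forbidden set $Y(d_i)$ of size at most $5$, and since $p\ge 11>5$ we can independently choose the value on each $d_i$ outside $Y(d_i)$ — this is possible because each $d_i$ can be adjusted independently (its adjusting flow lives in $G-V(C)$ plus a correction at $v_i$), provided we have a free parameter (the $C$-value) to mop up the cumulative imbalance, which we do. For the cycle edges $e_1,\dots,e_n$: once the pendant values are fixed, the flow restricted to $C$ must satisfy conservation at each $v_i$, which (since $v_i$ has degree $3$ in $H$, the third edge being $d_i$) forces $f(e_{i})$ and $f(e_{i-1})$ to differ by the fixed quantity $\pm f(d_i)$, up to signs dictated by the orientation. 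Thus the values $f(e_1),\dots,f(e_n)$ around $C$ are all determined by a single free parameter $t\in\Z_p$ (say $t=f(e_1)$), via $f(e_i)=t+c_i$ for known constants $c_i\in\Z_p$ depending on the already-chosen pendant values — \emph{except} that because $C$ is a negative cycle, going all the way around changes sign, so consistency is automatic rather than a constraint (this is exactly the phenomenon that makes negative cycles ``flexible''). Now each $e_i$ forbids the set $Y(e_i)$ of size at most $5$, i.e. $t\notin Y(e_i)-c_i$, a set of size at most $5$; and additionally $t$ and $f(d_1)$ must make condition (i) false for at most one of them — wait, more precisely, condition (i)/(ii) requires exactly one edge $e'$ to be exempted into the weaker requirement $f(e')\neq\overline f(e')$. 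So: pick $e'=e_1$; require $t\ne \overline f(e_1)$ (one forbidden value), and for every other edge of $H$ require $f$ to avoid $Y(\cdot)$. Counting: the pendant choices cost at most $5$ forbidden values each but are made sequentially and independently; the cycle parameter $t$ must avoid $\le 5$ values for each of $e_2,\dots,e_n$ — but this is $5(n-1)$ values, which is NOT bounded by $p$!

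\textbf{The main obstacle and how to resolve it.} The naive count above fails because $t$ alone cannot dodge $5(n-1)$ forbidden sets. The real idea — and the hard part of the proof — must be that we do \emph{not} have just one free parameter $t$ for the whole cycle: each pendant edge $d_i$ gives us a genuinely independent knob, and by choosing the $f(d_i)$ cleverly (not just to avoid $Y(d_i)$, but to also steer the consecutive cycle values $f(e_{i-1}),f(e_i)$ away from their forbidden sets) we gain enough freedom. Precisely, I would process the vertices $v_1,v_2,\dots,v_n$ in order, maintaining the invariant that $f(e_{i-1})$ has already been fixed to a good value (not in $Y(e_{i-1})$); then at step $i$ we choose $f(d_i)$, which simultaneously must avoid $Y(d_i)$ (at most $5$ values) \emph{and} must make $f(e_i)=f(e_{i-1})\pm f(d_i)$ avoid $Y(e_i)$ (at most $5$ more values) — a total of at most $10$ forbidden values, leaving $p-10\ge 1$ legal choices since $p\ge 11$. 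This is exactly why the hypothesis is $p\ge 11$ rather than $p\ge 7$, and exactly why $Y(e)$ is defined with the $\pm 3,\pm 6$ slack (the ``$3$'' presumably coming from Theorem~\ref{lemma2to3}, which will be invoked to realize certain $\Z_2$-flow corrections as integer $3$-flows contributing values in a bounded range — so the $Y$'s absorb future perturbations from the rest of the proof of Theorem~\ref{main}). The final loose end: at $v_1$ we must also respect that we only get condition (i) (the single exempted edge) for $e_1$, and conservation around the whole negative cycle closes up consistently — which, as noted, it does automatically because $C$ is negative. So the skeleton is: (a) establish the flow building blocks from $2$-connectivity of $G-V(C)$ plus the disjoint negative cycle $N$; (b) walk around $C$ choosing each $f(d_i)$ to kill $\le 10$ forbidden values using $p\ge 11$; (c) check conservation and the negative-cycle closure; (d) confirm (i) holds at $e'=e_1$ and (ii) holds elsewhere. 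The principal difficulty is (a) — verifying carefully that $2$-connectivity of $G-V(C)$ together with the disjoint negative cycle genuinely lets us realize an arbitrary value on each pendant $d_i$ with all side-effects confined appropriately, handling the parity/sign bookkeeping of negative edges and possibly invoking Theorem~\ref{lemma2to3} to turn a $\Z_2$-flow witness into an integer $3$-flow with controlled support — this is where the ``$\pm 3,\pm 6$'' in $Y$ earns its keep.
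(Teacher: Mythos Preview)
Your sequential ``fix two edges per step using $5+5=10<11=p$'' idea is exactly the engine of the paper's argument, and your building blocks are essentially the positive cycles $D_i$ that the paper uses: for each $i$ one finds (via $2$-connectivity of $G-V(C)$ and the disjoint negative cycle $N$, forming a theta) a positive cycle $D_i$ meeting $H$ in exactly the three edges $d_i,e_i,d_{i+1}$. Pushing a flow of value $a_i$ on $D_i$ sets $f(e_i)=a_i$ and $f(d_i)=a_{i-1}+a_i$, so choosing $a_3,a_4,\dots,a_n$ in order lets you fix $d_i$ and $e_i$ simultaneously, just as you say.

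The genuine gap is your closure step. The claim that ``the negative cycle closes up consistently --- automatically because $C$ is negative'' is not correct: after $a_3,\dots,a_n$ are chosen, the single remaining parameter $a_1$ governs \emph{three} unfixed edges $d_1,e_1,d_2$ (since $D_1$ touches all three), and a priori that is $5+5+5=15$ forbidden values, which $p\ge 11$ does not beat. The negativity of $C$ does not collapse this; what the paper does instead is a case split. If $\partial\overline f(v)=0$ for every $v\in V(C)$, then the relations among the $\overline f$-values allow a direct construction (take $a_i=\overline f(e_i)+1$, and check by hand that each edge lands a small constant away from $\overline f$). If some $\partial\overline f(v)\ne 0$, one first commits $a_2$ to the special value $\overline f(d_2)-\overline f(e_1)$; this forces $f(e_2)\ne \overline f(e_2)$ (so $e_2$ becomes the exempted edge $e'$) and, crucially, makes the forbidden sets for $e_1$ and $d_2$ coincide as functions of $a_1$, so the final step has only $5+5=10$ forbidden values. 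You are missing this correlation trick (or the boundary-zero case). A minor point: Theorem~\ref{lemma2to3} plays no role in this lemma --- the $\pm3,\pm6$ in $Y(e)$ is slack for a later stage of the proof of Theorem~\ref{main}, not something you need to produce here.
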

\begin{proof}
Let the negative sun be the $H_n$ that is signed and labelled as in Figure \ref{fig: NegativeSun}.
Since $N$ is disjoint from $H_n$, and $G - C$ is $2$-connected, for each $e_i$ there is a positive cycle $D_i \subseteq G$ so that $E(D_i) \cap E(H_n) = \{e_i', e_i, e_{i+1}'\}$ (modulo $n$).

Begin by orienting the edges of $H_n$ as in Figure \ref{fig: flow on H when boundary 0 everywhere}, noticing that the orientation differs slightly depending on the parity of $|V(C)|$. First suppose that given this orientation, the boundary of $\overline{f}$ is equal to 0 for every vertex in $C$, that is, $\partial \overline{f}(v)=0$ for all $v\in C$.

If $|V(C)|$  is odd, then for $i \in \{1 \dots n\}$, let $f_i: E(D_i) \rightarrow \Z_p$ be a flow with the given orientation so that $f_i(e_i) = \overline{f}(e_i) + 1$, and set $f = \sum_{i=1}^n f_i$. Since $p> 7$, the flow $f$ satisfies our desired condition (ii) on the edges $e_1, e_2, \ldots, e_n$. On the other hand, given our assumption about the boundary, for any $i$,
$$f(e_i')= \overline{f}(e_{i-1})+\overline{f}(e_{i})+2=\overline{f}(e'_i)+2.$$
Hence, since $p>8$, these edges also satisfy $(ii)$.

If $|V(C)|$ is even, then construct $f$ almost as in the odd case, with the exception that we add one to the flow $f_n$, so that $f_n(e_n) = \overline{f}(e_n) + 2$. This has the effect that $f$ takes the same values as the odd case except for the three edges $e_n',e_n$ and $e_1'$. These have flow values equal to their forbidden value plus $3$, $2$, and $1$ respectively (note in the last case that $\overline{f}(e_1')+\overline{f}(e_1)=\overline{f}(e_n)$); 
see Figure \ref{fig: flow on H when boundary 0 everywhere}. Therefore, since $p>8$, 
$e_n'$ satisfies (i), 
and all other edges in $H_n$ satisfy (ii).

\begin{figure}
    \centering
    \includegraphics[scale = 1]{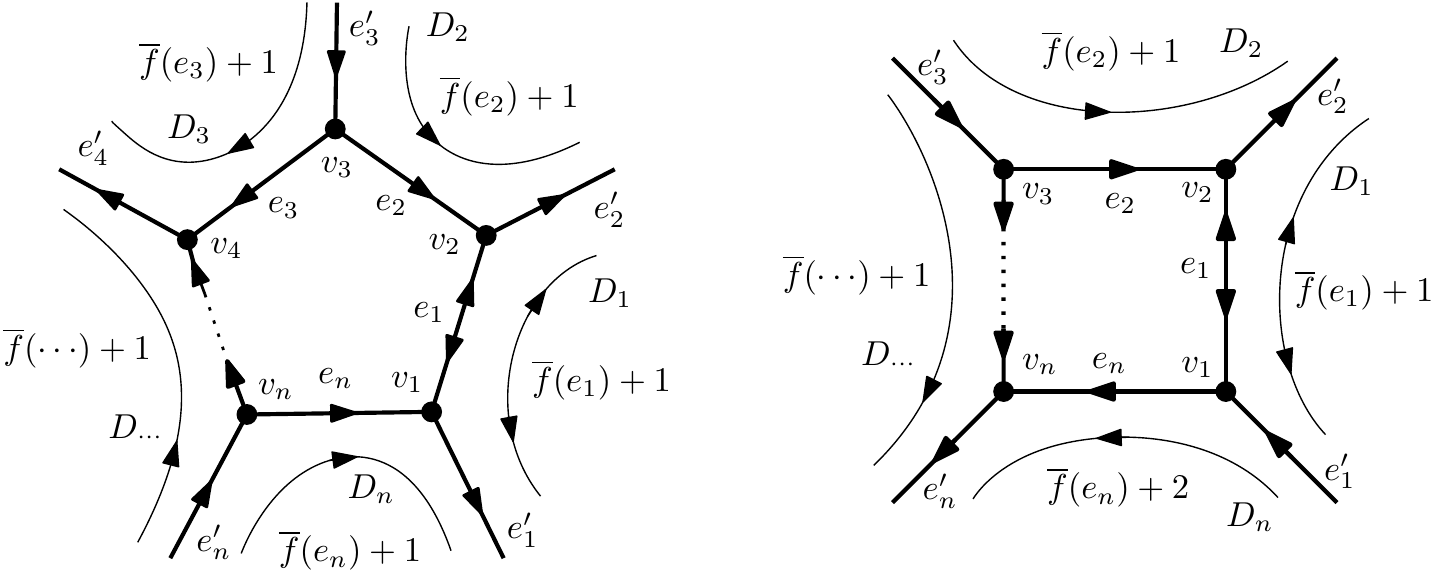}
    \caption{The graph $H_n$ when $|V(C)|$ is odd (left) and even (right). The flows which sum to $f$ are depicted for the case when the boundary of $\overline{f}$ is zero for all vertices in $V(C)$.}
    \label{fig: flow on H when boundary 0 everywhere}
\end{figure}

We may now assume that there is some $v \in V(C)$ such that $\partial \overline{f} (v) \neq 0$. Since the absolute value of $|\partial \overline{f} (v)|$ is independent of changes in orientation and signature (provided  $\overline{f}$ is modified appropriately), it is without loss to suppose that $v_2$ has nonzero boundary. Let $g_2: D_2 \rightarrow \Z_p$, be a flow so that $g_2(e_2) = \overline{f}(e_2') - \overline{f}(e_1)$.
Because $\partial\overline{f} (v_2) \neq 0$, it follows that $g_2(e_2) \neq \overline{f}(e_2)$.
We will see that the flow $g_2$ has the effect of matching up the edges $e_1$ and $e_2'$ so that they can be fixed (made to not equal any of their forbidden $Y$-values) simultaneously. 
But first, we fix most of the edges of $E(H_n)$ two at a time using the positive cycles $D_3 \dots D_{n}$.
We can fix two edges at a time because $p \geq 11$, and we are restricting 5 values per edge, for a maximum of $5 \cdot 2 = 10 < 11$ bad values.
We define $g_i : D_i \rightarrow \Z_p$ for $3\leq i\leq n$ to fix edges $e_i$ and $e_i'$. In the end, $g' := \sum_{i=2}^n g_i$ is such that $g'(e) \not\in Y(e)$ for all $e \in E(H_n)\setminus \{e_1,e_1', e_2,e_2'\}$ and $g'(e_2) \neq \overline{f}(e_2)$.
Finally, we use a flow $g_1 : D_1 \rightarrow \Z_p$ to fix the remaining three edges $e_1',e_1,e_2'$ so that $g = g' + g_1$ is such that $g(e) \not\in Y(e)$ for all $e \in \{e_1',e_1,e_2'\}$.
To see that these three edges can be fixed simultaneously, note that
$g'(e_1) = 0$ and $g'(e_2') = \overline{f}(e_2') - \overline{f}(e_1)$.
So if $g(e_1) = \overline{f}(e_1) + k$ for some $k \in \Z_p$, then $g(e_2') = \overline{f}(e_2') + k$.
This means the set of flows on $D_1$ that are bad for $e_1$ are exactly the flows that are bad for $e_2'$. So we again have at most ten bad flow values, but this time we can fix three edges. This means $g$ is as required: $g(e_2) \neq \overline{f}(e_2)$, and $g(e) \not\in Y(e)$ for all $e \in E(H_n) \setminus \{e_2\}$.
\end{proof}

Now we proceed with our proof of Theorem \ref{main}, restated here for convenience.

\setcounter{theorem}{1}
\begin{theorem} Let $G$ be a $3$-edge-connected, 2-unbalanced signed graph. Then $G$ is $A$-connected for every abelian group $A$ with $|A|\geq 6$ and $|A|\neq 7$.
\end{theorem}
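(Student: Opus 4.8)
The plan is to finish Theorem~\ref{main} by handling the only remaining case: $A$ of prime order $p \geq 11$ (since $|A|=6$ and composite $|A|$ were settled in Theorem~\ref{thm: Z2 x Z3}, the case $|A|\in\{8,9,10,\ldots\}$ composite included, $|A|=7$ is excluded, and Theorem~\ref{thm: no 2 disj negative cycles} shows a minimum counterexample has two disjoint negative cycles). So suppose for contradiction that $G$ is a minimum counterexample; by Lemma~\ref{lem:reduction} it is simple, cubic, and $3$-connected, and by Lemma~\ref{lem:reduction}(i)--(ii) it has no $X\subseteq V(G)$ with $G[X]$ balanced and either $|\delta(X)|=3$ (with $|X|\geq 2$) or $|\delta(X)|=4$ with $G[X]$ planar and degree-$2$ vertices on the outer face. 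Together with the two-disjoint-negative-cycle hypothesis, these are exactly the hypotheses of Theorem~\ref{thm: decomposition to connected base and almost 2-base}, so we obtain a partition $E(G) = X_1 \cup X_2$ where $X_1$ is a connected base containing the edge-set of a negative sun, $\langle X_2\rangle_2 = E(G)\setminus F$ with $F = E(H_n)$ the edge-set of a negative sun, $\langle X_2\rangle_2$ is $2$-connected, and $X_2$ is unbalanced.

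The strategy then mirrors the composite-order proof, but now the ``leftover'' we cannot reach with the $2$-closure is the negative sun $F$ rather than the empty set, and we have no subgroup to quotient by. Fix an orientation of $G$ and forbidden values $\overline f : E(G)\to \Z_p$. First I would use the $2$-base structure of $X_2$: since $\langle X_2\rangle_2 = E(G)\setminus F$, there is a list of positive cycles $C_1,\dots,C_t$ with $|W_i := C_i\setminus(\cup_{j<i}C_j \cup X_2)|\le 2$ and $E(G)\setminus(X_2\cup F)\subseteq \cup_i W_i$; working from $C_t$ down to $C_1$ and choosing flows $f_i$ on each $C_i$ (at least $p\ge 11$ choices, at most $2$ bad), we build $\phi_1 = \sum f_i$ with $\phi_1(e)\neq \overline f(e)$ on every edge of the connected base $X_1$ \emph{outside} $F$. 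The catch is that this leaves the edges of the negative sun $F\subseteq X_1$ uncontrolled. This is precisely what Lemma~\ref{lem:flow star on $H$} is built to repair: with $H = H_n$, $C$ its negative cycle, $N$ the other disjoint negative cycle guaranteed by the two-disjoint-negative-cycles hypothesis (which is edge-disjoint from $H$ after a suitable choice in the decomposition), and $G-V(C)$ being $2$-connected (which should follow from $\langle X_2\rangle_2$ being $2$-connected together with the sun structure), we get a flow $f^{\ast}$ on $G$ with $f^{\ast}(e')\neq (\overline f - \phi_1)(e')$ for one sun-edge $e'$ and $f^{\ast}(e)\notin Y(e) = \{(\overline f-\phi_1)(e), (\overline f-\phi_1)(e)\pm 3, (\overline f-\phi_1)(e)\pm 6\}$ for all other sun-edges. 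The reason for the $\pm3,\pm6$ slack is that, after adding $f^{\ast}$, one still has to fix each remaining fundamental cycle of an edge $b\in X_2$ using the connected base $X_1$; such a fundamental cycle is a positive cycle or a barbell, and putting a $\Z_p$-flow on it changes at most two other edges of $H_n$ — an argument via Theorem~\ref{lemma2to3} (converting a suitable $\Z_2$-flow to a $\{1,-1\}$-valued $3$-flow) shows these corrections live in $\{0,\pm3\}$ per sun-edge, so the buffer $Y(e)$ absorbs them. Assembling $\phi = \phi_1 + f^{\ast} + (\text{corrections on } X_2)$ gives a nowhere-equal-to-$\overline f$ flow, hence $G$ is $A$-connected by Proposition~\ref{equivAcon}, a contradiction.

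I would organize the write-up as: (1) reduce to $p$ prime, $p\ge 11$, $G$ with two disjoint negative cycles, cubic and $3$-connected, and invoke Theorem~\ref{thm: decomposition to connected base and almost 2-base} — being careful to also verify the negative sun $H$ it produces can be taken edge-disjoint from a second negative cycle $N$ and that $G-V(C)$ is $2$-connected, so that Lemma~\ref{lem:flow star on $H$} applies; (2) build $\phi_1$ from the $2$-base $X_2$ fixing all of $X_1\setminus F$; (3) apply Lemma~\ref{lem:flow star on $H$} to $\overline f - \phi_1$ restricted to $F$ to get $f^{\ast}$ with the $Y$-buffer on sun-edges; (4) fix the edges of $X_2$ one or two at a time using fundamental cycles/barbells with the connected base $X_1$, noting each such correction perturbs the sun-edges by an element of $\{0,\pm3\}$ (this is where Theorem~\ref{lemma2to3} enters, to guarantee a $3$-flow on the relevant structure with $\pm1$ on its support), so the sun-edges stay out of their forbidden values; (5) sum up and conclude via Proposition~\ref{equivAcon}.

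The main obstacle I anticipate is step (4) coupled with the bookkeeping in step (3): one must make sure the arithmetic budget never overflows. Each sun-edge starts with $5$ forbidden values encoded in $Y(e)$ (the original forbidden value plus the four shifts $\pm3,\pm6$); Lemma~\ref{lem:flow star on $H$} needs $p\ge 11$ precisely so that fixing two sun-edges at once via a positive cycle $D_i$ avoids at most $10<11$ bad values, and the correction phase for $X_2$ must then only ever shift a given sun-edge by $0$ or $\pm3$ a bounded number of times — in fact at most once per sun-edge, which is why the decomposition's guarantee that $X_2$'s fundamental cycles interact with $F$ in a controlled way (each fundamental cycle of a $b\in X_2$ meets $H_n$ in at most one ``spoke-and-arc'' piece, contributing a single $\pm3$ shift via the $3$-flow of Theorem~\ref{lemma2to3}) is essential. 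Getting this interaction bound exactly right, and confirming that $G-V(C)$ is genuinely $2$-connected (which may require a small additional argument about how the sun sits inside $G$, possibly re-choosing the peripheral cycle in the decomposition), is where the real care is needed; everything else is the same counting-with-slack mechanism already used in Theorem~\ref{thm: Z2 x Z3}.
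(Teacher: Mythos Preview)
Your overall architecture matches the paper's, but there is a genuine gap in how you budget the ``slack'' on the tree edges outside the sun.

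You build $\phi_1=\sum f_i$ so that $\phi_1(e)\neq\overline f(e)$ for $e\in X_1\setminus F$, counting ``at most $2$ bad'' per $W_i$. But the final correction you invoke via Theorem~\ref{lemma2to3} is an integer $3$-flow $\psi$ supported on $T\cup B_1$; on edges of $T$ it takes values in $\{-2,-1,0,1,2\}$ (only on $B_1$ is it guaranteed $\pm1$), so $3\psi$ shifts \emph{every} tree edge, sun or not, by an element of $\{0,\pm3,\pm6\}$. Merely having $\phi_1(e)\neq\overline f(e)$ on $e\in X_1\setminus F$ gives no protection against $\phi_1(e)+3\psi(e)=\overline f(e)$. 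The paper therefore demands the full buffer $\phi_1(e)\notin Y(e)=\{\overline f(e),\overline f(e)\pm3,\overline f(e)\pm6\}$ for every $e\in T\setminus F$ as well as for the sun edges; with $|W_i|\le 2$ this is at most $10$ bad values, and this is exactly where $p\ge11$ is spent on the non-sun tree edges. Your count of ``at most $2$ bad'' shows you have not imposed this.

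This forces a second change: the order must be $f^{\ast}$ first, then the $C_i$'s. The flow $f^{\ast}$ produced by Lemma~\ref{lem:flow star on $H$} lives on all of $G$ (its auxiliary cycles $D_i$ leave the sun and run through $E(G)\setminus F$), so applying it after you have already tuned $\phi_1$ on $T\setminus F$ would scramble those values with no control. The paper instead takes $f^{\ast}$ first to pin down $F$, and then chooses the $f_i$ so that $\phi_1:=f^{\ast}+\sum f_i$ lands outside $Y(e)$ on $T\setminus F$; this works cleanly because each $C_i\subseteq\langle X_2\rangle_2=E(G)\setminus F$, so the $f_i$ never touch the sun. Your worries about $G-V(C)$ being $2$-connected and about the existence of $N$ are non-issues: $\langle X_2\rangle_2$ is $2$-connected by the decomposition theorem and equals $G-V(C)$ as a graph, and ``$X_2$ unbalanced'' supplies $N\subseteq X_2$ edge-disjoint from $F$.
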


\begin{proof}
Let $G$ be a counterexample that is minimum in the sense of Lemma \ref{lem:reduction}.  By Theorem \ref{thm: Z2 x Z3} we may assume that $|A|$ is prime and $|A|\geq 11$. By Theorem \ref{thm: no 2 disj negative cycles} we may assume that $G$ has two disjoint negative cycles. We claim that $G$ satisfies the assumptions of Theorem \ref{thm: decomposition to connected base and almost 2-base}. Certainly, by Lemma \ref{lem:reduction}, it is $3$-connected and cubic. Suppose that there is $X \subseteq V(G)$ with $G[X]$ balanced. Notice that if $|\delta(X)| = 3$, then by Lemma \ref{lem:reduction}, $|X|=1$ and similarly, if $|\delta(X)| = 4$ and $G[X]$ is planar, then by Lemma \ref{lem:reduction} $G[X]$ must be a path on two vertices.

We now apply Theorem \ref{thm: decomposition to connected base and almost 2-base} to get a partition $T \cup B = E(G)$ such that
\begin{enumerate}
    \item $T$ is a connected base,
    \item $\langle B \rangle_2 = E(G)-F$, where $F \subseteq T$ are the edges of a negative sun,
    \item $\langle B \rangle_2$ is $2$-connected, and
    \item there exists a negative cycle $N \subseteq B$.
\end{enumerate}

Let $\overline{f}: E(G) \rightarrow \Z_p$ be a set of forbidden edge values, corresponding to some initial orientation of $G$. We will use the partition $T \cup B$ to find a flow $\phi: E(G) \rightarrow \Z_p$ so that $\phi(e) \neq \overline{f}(e)$ for all $e \in E(G)$ (with either the same orientation or with $\overline{f}$ appropriately changed). By Proposition \ref{equivAcon}, this implies that $G$ is $Z_p$-connected.

As in the proof of Theorem \ref{thm: Z2 x Z3}, we will define two separate flows, each focusing on the edges of $T$ and then $B$ respectively, which we sum to achieve our desired result. In particular, for the first of these, we aim to construct a flow $\phi_1: E(G) \rightarrow \Z_p$, so that $\phi_1(e') \neq \overline{f}(e')$ for some $e' \in F$, and $\phi_1(e) \not \in Y(e) := \{\overline{f}(e), \overline{f}(e) \pm 3, \overline{f}(e) \pm 6\}$ for every $e \in T \setminus \{e'\}$. We begin by applying  
Lemma \ref{lem:flow star on $H$} to get a flow $f^*$ on $G$ satisfying these requirements for all $e\in F$. Note that we can apply Lemma \ref{lem:flow star on $H$} here because $\langle B \rangle_2$ is $2$-connected, $N$ is a negative cycle that is edge-disjoint from $F$, and $p$ is a prime $\geq 11$. To finish building $\phi_1$, we must ensure every remaining edge $e \in T' = T \setminus F$ does not receive a flow-value in the set $Y(e)$ of five forbidden values. We do this in a manner similar to the proof of Theorem \ref{thm: Z2 x Z3}.
As before, because $T' \subseteq \langle B \rangle_2$, there is a list of positive cycles $C_1, \dots, C_t$ where $|W_i : = E(C_i) \setminus (\cup_{j=1}^i C_j \cup B)| \le 2$, and $T' \subseteq \cup_{i= 1}^t W_i$.
Moreover, because $F \cap \langle B \rangle_2 = \emptyset$, it follows that $F \cap E(C_i) = \emptyset$ for each $i$.
Working from $C_t$ to $C_1$, we choose a flow $f_i: E(C_i) \rightarrow \Z_p$ so that $\phi_1 := f^* + \sum_{i=1}^t f_i$ is a flow where $\phi_1(e) \not \in Y(e)$ for every $e \in T'$.
As before, every $f_i$ sets the value $\phi_1(e)$ for each $e \in W_i$. For every $f_i$ there are $p$ choices of flow value, five of which are forbidden for each $e \in W_i$ -- they would cause $\phi_1(e) \in Y(e)$. Since $|W_i| \le 2$, this means there at most ten forbidden values. Since $p \ge 11$, we can always choose an appropriate $f_i$ so that $\phi_1(e) \notin Y(e)$ for all $e \in W_i$.

Let $B_1:=\{e \in B : \phi_1(e) = \overline{f}(e)\}$. We now aim to construct an integer 3-flow $\psi: T \cup B_1 \to \Z$ such that $\psi(e)=\pm 1$ for all $e\in B_1$.
To this end, observe that adding each edge $e\in B_1$ to $T$ creates either a barbell or a positive cycle. In the former case, add the two cycles of the barbell to a set $\mathcal{C}$; in the latter case add the positive cycle to $\mathcal{C}$. In both cases, an even number of negative edges are added to $\mathcal{C}$. Now let $H$ be the graph induced by the symmetric difference of all the cycles in $\mathcal{C}$ and note that the graph $H$ has an even number of negative edges. Note also that because every $e \in B_1$ is in exactly one cycle of $\mathcal{C}$, it follows that $B_1 \subseteq E(H)$. Since $H$ is the symmetric difference of cycles, it has a nz $\mathbb{Z}_2$-flow. We can in turn view this as a $\mathbb{Z}_2$-flow of $T \cup B_1$ with support $E(H)$. By Theorem \ref{lemma2to3}, this means there is a 3-flow $\psi: T \cup B_1 \to \Z$ such that $\psi(e)=\pm 1$ for all $e\in B_1$.

Now consider $\phi:=\phi_1 + 3\psi$ where the sum is performed in $\Z_p$.
It is possible that $\phi(e') =\overline{f}(e')$. But since $\phi_1(e')\neq \overline{f}(e')$, this would imply that $\psi(e') \neq 0$. If this is the case, then define $\phi=\phi_1 - 3\psi$ instead. Note that $3\psi(e')\neq -3\psi(e')$ since this would mean that 12 is congruent to 0 modulo $p$, which is not the case for $p \ge 11$ and prime. Thus we get $\phi(e') \neq\overline{f}(e')$; it remains to check that $\phi(e) \neq \overline{f}(e)$ for every $e \in E(G) \setminus \{e'\}$.

Let $e \in E(G) \setminus \{e'\}$. It is the case that $3\psi(e)\in\{0, \pm 3, \pm 6\}$. If $e\in T \setminus \{e'\}$, then by construction $\phi_1(e) \not \in \{\overline{f}(e), \overline{f}(e) \pm 3, \overline{f}(e) \pm 6\}$, and so $\phi(e) \neq \overline{f}(e)$.
Now suppose that $e\in B$. If $\phi_1(e)\neq \overline{f}(e)$, then $\psi(e)=0$ (since $\psi$ was not specified on these edges), so $\phi(e)=\phi_1(e)\neq \overline{f}(e)$. On the other hand, if $\phi_1(e)=\overline{f}(e)$, that is $e\in B_1$, then $3\psi(e)=\pm 3$, so $\phi(e)= \overline{f}(e) \pm 3 \neq \overline{f}(e)$.
\end{proof}

\section{Acknowledgements}

We thank Matt DeVos for helpful discussions.

\bibliographystyle{amsplain}
\bibliography{bib}

\end{document}